\documentclass[a4paper, reqno]{amsart}
\usepackage{wrapfig}
\usepackage[dvips]{graphicx}
\usepackage{amsmath,amsthm,amssymb,amscd}
\usepackage{mathrsfs}
\usepackage[all]{xy}
\theoremstyle{definition}
\newtheorem{thm}{Theorem}[section]
\newtheorem{Def}[thm]{Definition}
\newtheorem{pro}[thm]{Proposition}
\newtheorem{cor}[thm]{Corollary}
\newtheorem{lem}[thm]{Lemma}

\newtheorem{rem}[thm]{Remark}

\newtheorem*{mainthm}{Theorem A}
\newtheorem*{mainthm2}{Theorem B}
\newtheorem*{mainthm3}{Theorem C}
\theoremstyle{definition}

\begin{document}
\title{Aperiodic automorphisms of the Razak--Jacelon algebra} 
\author{Norio Nawata}
\address{Department of Pure and Applied Mathematics, Graduate School of Information Science 
and Technology, The University of Osaka, Yamadaoka 1-5, Suita, Osaka 565-0871, Japan}
\email{nawata@ist.osaka-u.ac.jp}
\keywords{Razak--Jacelon algebra, aperiodic automorphisms, weak stability, 
central sequence C$^*$-algebras}
\subjclass[2020]{Primary 46L40; Secondary 46L35, 46L55}
\thanks{This work was supported by JSPS KAKENHI Grant Number JP26K06829}

\begin{abstract}
We completely classify aperiodic automorphisms of the Razak--Jacelon algebra $\mathcal{W}$ 
up to outer conjugacy. As a technical ingredient, based on the works of Connes, 
Herman--Ocneanu, Kishimoto, and Sato, we introduce a notion of weak stability for 
automorphisms of simple separable monotracial C$^*$-algebras. 
We also consider automorphisms of UHF algebras of 
infinite type and the Jiang--Su algebra. In particular, we classify aperiodic 
``UHF-absorbing'' automorphisms of UHF algebras of infinite type 
up to outer conjugacy. 
\end{abstract}
\maketitle
\section{Introduction}

The Razak--Jacelon algebra $\mathcal{W}$ \cite{J, Raz} is the unique $\mathcal{Z}$-stable simple 
separable nuclear monotracial C$^*$-algebra that is $KK$-equivalent to $\{0\}$ 
(see \cite{CE, EGLN, Na4}). In particular, 
if $A$ is a simple separable nuclear monotracial C$^*$-algebra, then $A\otimes \mathcal{W}$ is 
isomorphic to $\mathcal{W}$. 
The C$^*$-algebra $\mathcal{W}$ can be regarded as a C$^*$-analog of the injective II$_1$ factor and 
a monotracial analog of the Cuntz algebra $\mathcal{O}_2$. In this paper, we completely classify 
aperiodic automorphisms of the Razak--Jacelon algebra $\mathcal{W}$ up to outer conjugacy. 
To the best of the author's knowledge, this is the first complete classification of aperiodic 
automorphisms of a simple separable non-type I stably finite C$^*$-algebra.

The study of automorphisms (more generally, automorphism groups)  
is one of the central themes in the theory of operator algebras. 
Connes \cite{C2, C3} classified automorphisms $\delta$ of the injective  II$_1$ factor up to 
outer conjugacy by the invariants $N(\delta)$ and $\Lambda(\delta)$ 
(see Section \ref{subsec:invariant} for the precise definitions).  
Moreover, he classified automorphisms of the injective II$_{\infty}$ factor by using this classification. 
Kishimoto \cite{Kis0} classified ``strongly aperiodic'' automorphisms of uniformly hyperfinite
(UHF) algebras. 
Furthermore, Evans and Kishimoto \cite{EK} classified trace scaling automorphisms of 
stable approximately finite-dimensional (AF) algebras with totally ordered $K_0$-groups. 
Based on the strategy in \cite{Kis0, EK}, 
``strongly aperiodic'' automorphisms and trace scaling automorphisms of larger classes of 
stably finite C$^*$-algebras have been classified (see, for example, \cite{BK, Kis1, Kis2, M1, Na2, Sa, 
Sza6}). On the other hand, Nakamura \cite{Nak} completely classified aperiodic automorphisms 
of Kirchberg algebras, also based on the strategy in \cite{Kis0, EK}. 
More generally, Gabe and Szab\'o \cite{GS} classified amenable outer actions of discrete groups 
on Kirchberg algebras. (Note that they also consider actions of more general locally compact groups.)

Note that ``strongly aperiodic'' automorphisms and trace scaling automorphisms 
induce aperiodic automorphisms $\delta$ (that is,  $N(\delta)=\infty$) of the von Neumann 
algebras associated with tracial representations, 
and the Rohlin property plays a central role in their classification. 
These classification results are remarkable in themselves. 
However, there exist many interesting automorphisms of UHF algebras that are not strongly 
aperiodic. Indeed, every monotracial $\mathcal{Z}$-stable C$^*$-algebra has infinitely many aperiodic
automorphisms that are not strongly aperiodic and are mutually not outer conjugate 
(see Lemma~\ref{lem:weakly-outer}). 
Our classification covers all aperiodic automorphisms of $\mathcal{W}$,  
including those that are not strongly aperiodic. 

Our classification invariants of aperiodic automorphisms $\alpha$ of $\mathcal{W}$ are 
Connes' invariants $N(\tilde{\alpha})$ and $\Lambda(\tilde{\alpha})$, where 
$\tilde{\alpha}$ is the automorphism of $\pi_{\tau_{\mathcal{W}}}(\mathcal{W})^{''}$ 
induced by $\alpha$. More precisely, we obtain the following theorem. 

\begin{mainthm}[Theorem \ref{thm:main-classification}]
(1) Let $\alpha$ and $\beta$ be aperiodic automorphisms of $\mathcal{W}$. Then 
$\alpha$ and $\beta$ are outer conjugate if and only if 
$N(\tilde{\alpha})=N(\tilde{\beta})$ and $\Lambda(\tilde{\alpha})=\Lambda(\tilde{\beta})$. \ \\
(2) For any $N\in\mathbb{N}$ and $\lambda\in\mathbb{T}$ satisfying $\lambda^{N}=1$, there 
exists an aperiodic automorphism $\gamma$ of $\mathcal{W}$ such that 
$N(\tilde{\gamma})=N$ and $\Lambda (\tilde{\gamma})=\lambda$. 
Furthermore, there exists an aperiodic automorphism $\gamma$ of $\mathcal{W}$ 
such that $N(\tilde{\gamma})=\infty$. 
\end{mainthm}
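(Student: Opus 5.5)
The plan has two halves: the existence statement (2) by explicit constructions, and the uniqueness statement (1) by reducing to a standard model through a weak-stability argument in the central sequence algebra. Throughout I will use $\mathcal{W}\cong A\otimes\mathcal{W}$ for simple separable nuclear monotracial $A$.

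\emph{Existence (2).} I will build models as tensor products. For $N(\tilde\gamma)=\infty$ I take a Rohlin-type (strongly aperiodic) automorphism $\sigma$ of $\mathcal{W}$, for instance a tensor product shift on $\bigotimes_{\mathbb{Z}}$ of a UHF algebra tensored with $\mathcal{W}$. For finite $N$ and $\lambda^N=1$, I imitate Connes' model $s_N^\lambda$ on the injective II$_1$ factor $R$. Concretely, I take $\mathrm{Ad}(u)\otimes\mathrm{shift}$ on $M_N\otimes\bigotimes M_N$ realized inside a UHF algebra, with a unitary $u$ satisfying $\gamma^N=\mathrm{Ad}\,u$ and $\gamma(u)=\lambda u$. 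I then tensor with $\mathrm{id}_{\mathcal{W}}$, which gives an automorphism of $\mathcal{W}$. The weak closure in $\pi_\tau$ is $R\bar\otimes R$ with the induced automorphism $s_N^\lambda\otimes \mathrm{id}$, and its invariants are $N$ and $\lambda$. Aperiodicity in the C$^*$-sense follows because $\tilde\gamma^k$ is outer for every $k\neq0$.

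\emph{Uniqueness (1).} The "only if" direction is immediate: outer conjugacy of $\alpha$ and $\beta$ passes to $\tilde\alpha$ and $\tilde\beta$ on $R$, and $N$ and $\Lambda$ are outer conjugacy invariants there. For "if", I will show every aperiodic $\alpha$ is outer conjugate to the model $\gamma$ of (2) with the same invariants. The steps are as follows.
\begin{enumerate}
\item Show that $\alpha$ is outer conjugate to $\alpha\otimes \mathrm{id}_{\mathcal{W}}$ on $\mathcal{W}\otimes\mathcal{W}$, in fact to $\alpha\otimes\gamma_0$ for suitable models. I will do this by producing approximately central, approximately $\alpha$-fixed unital copies of matrix and $\mathcal{W}$-type structure in the central sequence algebra $F(\mathcal{W})$. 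The required surjectivity $F(\mathcal{W})^{\alpha}\to (R^\omega\cap R')^{\tilde\alpha}$ will come from property (SI)-type results and Kirchberg's $\varepsilon$-test.
\item Introduce the weak stability notion from the abstract. Here $\alpha$ is weakly stable when the cocycle obstruction for $\alpha^{N}$ vanishes in $F(\mathcal{W})$: one needs a unitary $v$ in the central sequence algebra with $\alpha^{N}\approx \mathrm{Ad}\,v$ in the appropriate sense and $\alpha(v)=\lambda v$. I will prove this by lifting Connes' unitary from $R^\omega\cap R'$ via Sato's techniques, using that $F(\mathcal{W})\to R^\omega\cap R'$ is surjective with $\mathcal{W}$-absorbing kernel behaviour.
\item Using weak stability, run an Evans--Kishimoto intertwining. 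First obtain an approximate Rohlin-type tower for $\alpha$ of period $N$ twisted by $\lambda$. Then show $\alpha$ and $\gamma$ are approximately cocycle conjugate, using that $\mathcal{W}$ is $KK$-contractible, so every automorphism is approximately inner and unitary groups have no $K$-theoretic obstruction. Finally upgrade to outer conjugacy by a Herman--Ocneanu / Kishimoto $1$-cohomology vanishing argument.
\end{enumerate}
When $N=\infty$, step 2 is replaced by the Rohlin property in the tracial sense, and the rest is similar.

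\emph{Main obstacle.} I expect step 2 to be the hardest: lifting Connes' relation $\tilde\alpha^N=\mathrm{Ad}\,w$, $\tilde\alpha(w)=\lambda w$ from the von Neumann algebra to genuinely central sequences in $\mathcal{W}$ with exact eigenvalue relations. $\mathcal{W}$ has no unit and no projections, so I first try to work in $F(\mathcal{W})$, which is unital, and use strict comparison there. For lifting unitaries with exact twisting, I would apply Kishimoto's $2$-cohomology trick together with the stable-rank-one property of $F(\mathcal{W})$.
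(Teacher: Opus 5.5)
\textbf{Part (2) contains a concrete error.} Suppose, as you describe, that $\gamma$ is Connes' model realized inside a UHF algebra with $\gamma^N=\mathrm{Ad}(u)$ for a unitary $u$ of that algebra. Then $(\gamma\otimes\mathrm{id}_{\mathcal{W}})^N=\mathrm{Ad}(u\otimes 1)$ is inner, so $\gamma\otimes\mathrm{id}_{\mathcal{W}}$ is not aperiodic. Your justification (``$\tilde\gamma^k$ is outer for every $k\neq0$'') also contradicts $N(\tilde\gamma)=N<\infty$, since by definition $\tilde\gamma^N$ is inner.

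For finite $N$ you need a factor that is outer in every power at the C$^*$-level but weakly inner. The paper constructs one as follows (Lemma~\ref{lem:weakly-outer}, Proposition~\ref{pro:model}):
\begin{itemize}
\item It takes $\beta=\lim_n\mathrm{Ad}(w_n)$ on $\mathcal{Z}$ with $\sum_n|1-\tau_{\mathcal{Z}}(\varphi(f_n))|<\infty$. This gives $\tilde\beta=\mathrm{Ad}(V)$ with $\tilde\beta(V)=V$, while every $\beta^k$ moves a central sequence.
\item It tensors $\beta$ with a cyclic permutation of $N$ tensor factors, giving an aperiodic automorphism with invariants $(N,1)$.
\item It then tensors with Connes' model on $M_{N^\infty}$ to reach $(N,\lambda)$.
\end{itemize}

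\textbf{For (1), the new case is $N(\tilde\alpha)<\infty$, and there your Step 3 cannot run as stated.} Since $\tilde\alpha^N$ is inner, it acts trivially on $\mathcal{M}_\omega(\mathcal{W})$. This has two consequences:
\begin{itemize}
\item $\alpha$ has no Rohlin towers of height $>N$, because their images under $\varrho$ would be Rohlin towers for the identity.
\item $\alpha$ is not stable. If $\mu 1=v\alpha(v^*)$, then $\varrho(v)=\tilde\alpha^N(\varrho(v))=\overline{\mu}^N\varrho(v)$, forcing $\mu^N=1$.
\end{itemize}
So an Evans--Kishimoto intertwining closed off by Herman--Ocneanu/Kishimoto $1$-cohomology vanishing is unavailable. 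Approximate innerness of automorphisms of $\mathcal{W}$ also does not supply the existence input such an intertwining needs, namely implementing unitaries whose $N$-fold cocycle products are controlled in $\|\cdot\|_2$.

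\textbf{The real crux is the paper's weak stability}: solving $u=v\alpha(v^*)$ in $F(\mathcal{W})$, up to a scalar, for unitaries with $\varrho(u)=1$. Your Step 2 aims at essentially this, since lifting $\tilde\alpha^N=\mathrm{Ad}(w)$, $\tilde\alpha(w)=\lambda w$ exactly reduces to it. But neither Kishimoto's $2$-cohomology trick nor stable rank one addresses the difficulty, because the obstruction lives in $\ker\varrho$. The paper proceeds as follows:
\begin{itemize}
\item For UHF-absorbing $\alpha$, it writes $u=1-p+up$ with $p\in\ker\varrho$ an $\alpha$-fixed projection. It joins $u$ to $1$ by a $2\pi$-Lipschitz path in $F(\mathcal{W})\cap\{p\}'$ (Theorem~\ref{thm:path}), moves $p$ to $q$ with $q\alpha^k(q)=0$ (Theorem~\ref{thm:small-rohlin}), and applies Lemma~\ref{lem:Opus}.
\item The general case then follows via $Z_{2^\infty,3^\infty}$ (Lemma~\ref{lem:key}, Proposition~\ref{pro:prime-drop-embedding}).
\item Combined with Connes' results in $\mathcal{M}_\omega$, this yields vanishing only for cocycles whose $N$-fold product is trivial under $\varrho$. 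So the intertwining must invoke vanishing only at such stages, which is what the Szab\'o-type argument in Theorem~\ref{thm:absorption-trivial-char} does.
\end{itemize}
Its existence input (Proposition~\ref{pro:existence}) is obtained only for $\alpha\otimes\gamma$ versus $\alpha\otimes\mathrm{id}_B$ with $\gamma$ approximately inner and $\Lambda(\tilde\gamma)=1$. That is why the paper never compares $\alpha$ directly with a model. Instead it uses
\[
\alpha\sim_{o.c.}\alpha\otimes\mathrm{id}_{\mathcal{W}}\sim_{o.c.}\alpha\otimes\gamma\otimes\beta\sim_{o.c.}\mathrm{id}_{\mathcal{W}}\otimes\beta\sim_{o.c.}\beta
\]
with $\Lambda(\tilde\gamma)=\overline{\Lambda(\tilde\alpha)}$ (Proposition~\ref{pro:char-group}).

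\textbf{Your Step 1 has the logic reversed.} Surjectivity of $\varrho$ and (SI)-type lifting by themselves give dimension-drop structure in $F(\mathcal{W})^{\alpha}$ (that is, $\mathcal{Z}$-absorption), not unital matrix algebras. In the paper, $\mathcal{Q}$-absorption (Proposition~\ref{pro:d-absorption}) is a consequence of weak stability. $\mathcal{W}$-absorption (Theorem~\ref{thm:W-absorbing}) comes from the absorption theorem, not from any embedding of the non-unital algebra $\mathcal{W}$.
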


This classification can be regarded as a rigidity result. Indeed, we obtain the following theorem 
as a corollary of Theorem A.

\begin{mainthm2}[Corollary \ref{cor:rigidity}] 
Let $\alpha$ and $\beta$ be aperiodic automorphisms of $\mathcal{W}$. Then 
$\alpha$ and $\beta$ are outer conjugate if and only if  
$\tilde{\alpha}$ and $\tilde{\beta}$ are outer conjugate.
\end{mainthm2}

Beyond the case of $\mathcal{W}$, 
we also classify aperiodic UHF-absorbing automorphisms of UHF algebras of infinite type. 
More precisely, we obtain the following theorem. 

\begin{mainthm3}[Theorem \ref{thm:classification-uhf}]
Let $B$ be a UHF algebra of infinite type, and let $\alpha$ and $\beta$ be aperiodic automorphisms 
of $B$. Then $\alpha\otimes \mathrm{id}_B$ and $\beta\otimes 
\mathrm{id}_B$ are outer conjugate if and only if 
$N(\tilde{\alpha})=N(\tilde{\beta})$ and $\Lambda(\tilde{\alpha})=\Lambda(\tilde{\beta})$.
\end{mainthm3}

Note that we do not use Kishimoto's classification theorem \cite{Kis0} in the proof of 
Theorem C. In particular, our proof does not use the C$^*$-algebraic 
Rohlin property even for strongly aperiodic automorphisms.  

As a technical ingredient of the classification theorem, we introduce a notion of weak stability for 
automorphisms of simple separable monotracial C$^*$-algebras. 
This property is a weaker version of stability and is inspired by 
the works of Connes, Herman--Ocneanu, Kishimoto, and Sato. 
Roughly speaking, weak stability enables us to reduce cohomology vanishing problems 
in the central sequence C$^*$-algebra to those in the central sequence algebra of the 
injective II$_1$ factor. 

The most difficult part of this paper is to show that every aperiodic automorphism of $\mathcal{W}$ 
is weakly stable. 
Using Sato's strategy \cite{Sa} (see also \cite{MS1, W}), 
we can reduce this problem to the case 
of UHF-absorbing automorphisms. By considering small Rohlin towers \cite{K, Sa2, Sza6} and 
properties of the central sequence C$^*$-algebra of $\mathcal{W}$ \cite{Na2, Na5}, 
we can show that  every UHF-absorbing automorphism of $\mathcal{W}$ is weakly stable.

This paper is organized as follows. In Section \ref{sec:pre}, we collect notation, basic definitions, and 
previous results. In Section \ref{sec:weak}, we introduce a notion of weak stability for 
automorphisms of simple separable monotracial C$^*$-algebras 
and prove absorption results for weakly stable automorphisms. 
In Section \ref{sec:uhf}, we show that every aperiodic UHF-absorbing automorphism of $\mathcal{W}$ 
is weakly stable. Furthermore, we show that every aperiodic UHF-absorbing automorphism 
of $\mathcal{W}$ is $\mathcal{W}$-absorbing. 
In Section \ref{sec:classification}, we show that every aperiodic automorphism of $\mathcal{W}$ is 
weakly stable and prove the main classification theorem and Theorem C. 

\section{Preliminaries}\label{sec:pre}

\subsection{Automorphisms and monotracial C$^*$-algebras}

Throughout this paper, all C$^*$-algebras denoted by $A$ or $B$ 
are assumed to be infinite dimensional unless otherwise specified. Let $A_{+}$ be the set of positive 
elements in $A$. 
We denote by $A^{\sim}$ the unitization of $A$ and by 
$M(A)$ the multiplier algebra of $A$. Note that if $A$ is unital, then $A=A^{\sim}=M(A)$. 
Every automorphism $\alpha$ of $A$ extends uniquely to automorphisms of $A^{\sim}$ and $M(A)$. 
More generally, every nondegenerate homomorphism $\varphi$ from $A$ to $B$ 
extends uniquely to unital homomorphisms from $A^{\sim}$ to $B^{\sim}$ and 
from $M(A)$ to $M(B)$.  We denote these extensions by the same symbols 
$\alpha$ and $\varphi$ for simplicity. 
Let $\mathcal{U}(M(A))$ and $\mathrm{Aut}(A)$ denote the group of unitary elements in $M(A)$ and 
the group of automorphisms of $A$, respectively. 
For $u\in \mathcal{U}(M(A))$, 
define an automorphism $\mathrm{Ad}(u)$ of $A$ by $\mathrm{Ad}(u)(x)=uxu^*$ 
for any $x\in A$, and put $\mathrm{Inn}(A):=\{\mathrm{Ad}(u)\in \mathrm{Aut}(A)\; |\; u\in 
\mathcal{U}(M(A))\}$. 
We say that an automorphism $\alpha$ is \textit{aperiodic} if $\alpha^{n}\notin \mathrm{Inn}(A)$ for 
any $n\in\mathbb{Z}\setminus\{0\}$. An automorphism $\alpha$ is said to be \textit{approximately 
inner} if $\alpha\in \overline{\mathrm{Inn}(A)}$ where $\mathrm{Aut}(A)$ is equipped with the topology 
of pointwise norm convergence. For automorphisms $\alpha$ and $\beta$ of $A$, we say that 
$\alpha$ is \textit{outer conjugate to} $\beta$ if there exist an automorphism $\theta$ of $A$ and 
an element $u$ in $\mathcal{U}(M(A))$ such that 
$$
\alpha= \mathrm{Ad}(u)\circ \theta \circ \beta \circ \theta^{-1}.
$$
It is easy to see that if $\alpha$ is outer conjugate to $\beta$, then 
the crossed product C$^*$-algebra $A\rtimes_{\alpha}\mathbb{Z}$ is isomorphic to 
$A\rtimes_{\beta}\mathbb{Z}$. However, the fixed point algebras 
$A^{\alpha}:=\{x\in A\; |\; \alpha(x)=x \}$ and $A^{\beta}$ need 
not be isomorphic. 

We say that a C$^*$-algebra $A$ is monotracial if $A$ has a unique tracial state and admits no 
unbounded traces. If $A$ is monotracial, we denote by $\tau_{A}$ the unique tracial state on $A$ 
unless otherwise specified. Every tracial state $\tau$ on $A$ extends uniquely to a tracial state on 
$M(A)$ (see, for example, \cite[Lemma 2.10]{Tik}).  
We denote it by the same symbol $\tau$ for simplicity. 
For any tracial state $\tau$ on $A$, define the dimension function $d_{\tau}$ on $A_{+}$ by 
$d_{\tau}(a)=\lim_{n\to\infty}\tau(a^{1/n})$ for any $a\in A_{+}$. 
Let $A$ be a monotracial C$^*$-algebra. 
An element $u$ in $\mathcal{U}(M(A))$ is said to be a \textit{Haar unitary element} if 
$\tau_A(u^n)=0$ for any $n\in \mathbb{Z}\setminus\{0\}$. If $u$ is a Haar unitary element, then
$\tau_A(f(u))>0$ for any $f\in C(\mathbb{T})_{+}\setminus \{0\}$, where $\mathbb{T}$ denotes the  
set of complex numbers of modulus one. 

\subsection{Invariants of aperiodic automorphisms of monotracial C$^*$-algebras}
\label{subsec:invariant}

Let $A$ be a simple separable non-type I monotracial C$^*$-algebra, and 
let $\alpha$ and $\beta$ be automorphisms of $A$. 
We denote by $(\pi_{\tau_A}, H_{\tau_A})$ the Gelfand--Naimark--Segal (GNS) representation of $\tau_A$. 
Then $M:=\pi_{\tau_A}(A)^{''}$ is a II$_1$ factor, and $\alpha$ extends uniquely to an automorphism 
$\tilde{\alpha}$ of $M$. It is easy to see that if $\alpha$ is outer conjugate to $\beta$, 
then $\tilde{\alpha}$ is outer conjugate to $\tilde{\beta}$. 
Define  
$$
N(\tilde{\alpha}):=\left\{\begin{array}{ll}
\min\{n\in\mathbb{N}\; |\; \tilde{\alpha}^n\in \mathrm{Inn}(M) \} & \text{if } 
\tilde{\alpha}^n \in   \mathrm{Inn}(M) \text{ for some } n\in\mathbb{N}, 
\\
\infty & \text{otherwise.}
\end{array}
\right.
$$
Put $\Lambda(\tilde{\alpha}):= 1$ if $N(\tilde{\alpha})=\infty$. 
If $N(\tilde{\alpha})<\infty$, then there exists an element $U$ in $\mathcal{U}(M)$ such that 
$\tilde{\alpha}^{N(\tilde{\alpha})}=\mathrm{Ad}(U)$. Since $M$ is a factor, there exists 
a complex number $\Lambda(\tilde{\alpha})$ of modulus one such that 
$\tilde{\alpha}(U)=\Lambda(\tilde{\alpha})U$ and $\Lambda(\tilde{\alpha})^{N(\tilde{\alpha})}=1$. 
It is easy to see that $(N(\tilde{\alpha}), \Lambda(\tilde{\alpha}))$ is an outer conjugacy invariant 
of $\tilde{\alpha}$ (and hence of $\alpha$). 
Conversely, Connes showed that if $M$ is the injective II$_1$ 
factor and $(N(\tilde{\alpha}), \Lambda(\tilde{\alpha}))=(N(\tilde{\beta}), \Lambda(\tilde{\beta}))$, 
then $\tilde{\alpha}$ is outer conjugate to $\tilde{\beta}$ in \cite{C2} and \cite{C3}. 
We use the following proposition in the proof of the classification theorem in this paper. 

\begin{pro}\label{pro:char-group}
Let $A$ be a simple separable monotracial C$^*$-algebra. 
Assume that $A$ is isomorphic to $A\otimes A$. If $\alpha$ is an aperiodic automorphism of 
$A$, then there exists an aperiodic automorphism $\gamma$ of $A$ such that 
$N(\tilde{\gamma})=N(\tilde{\alpha})$ and $\Lambda(\tilde{\gamma})=
\overline{\Lambda (\tilde{\alpha})}$.  
\end{pro}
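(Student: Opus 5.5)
The idea is to take $\gamma$ to be a ``complex conjugate'' of $\alpha$. Using $A\cong A\otimes A$, this is realized by tensoring $\alpha$ with $\alpha^{-1}$ and exploiting the flip. Then $N$ is computed from the II$_1$ factor side.

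Concretely, for $N(\tilde\alpha)=N<\infty$, choose $U\in\mathcal{U}(M)$ with $\tilde\alpha^N=\mathrm{Ad}(U)$ and $\tilde\alpha(U)=\lambda U$, where $\lambda=\Lambda(\tilde\alpha)$. I first try $\gamma=\alpha^{-1}$. Then $\tilde\gamma^N=\mathrm{Ad}(U^*)$ and
$$\tilde\alpha^{-1}(U^*)=\overline{\tilde\alpha^{-1}(U)}^{\,*}=(\bar\lambda U)^*=\lambda U^*.$$
So inversion gives $\Lambda=\lambda$ again, not $\bar\lambda$; this choice does not work.

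Next I would use the antiautomorphism/opposite structure. Since $\tilde\alpha$ commutes with the canonical conjugation $J$ on $L^2(M)$ only up to passing to $M'\cong M^{op}$, the relevant object is $\alpha^{op}$ on $A^{op}$. The opposite automorphism satisfies $(\tilde\alpha^{op})^N=\mathrm{Ad}(U^{op})$ with $\tilde\alpha^{op}(U^{op})=\lambda U^{op}$, because scalars are unchanged. Conjugating via $x\mapsto x^*$ to the complex-conjugate algebra $\bar A$ instead turns $\lambda$ into $\bar\lambda$. The problem is that $\bar A$ need not be isomorphic to $A$, which is where the hypothesis $A\cong A\otimes A$ should enter.

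So the main step is a construction inside $A\otimes A$. Put $\gamma_0=\alpha\otimes\alpha$ and consider
$$\tilde\gamma_0^N=\mathrm{Ad}(U\otimes U),\qquad \tilde\gamma_0(U\otimes U)=\lambda^2(U\otimes U).$$
More generally, $\alpha^{a}\otimes\alpha^{b}$ gives a product of powers of $\lambda$. Taking $\gamma=\alpha\otimes\alpha^{k}$ with $k\equiv -2$ modulo $N$, so that the exponent is $1+k\equiv-1$, one expects $\tilde\gamma(U\otimes U^{k})$ to equal $\lambda^{1+k^2}$-type factors. It is cleaner to use $\mathrm{Ad}(U^{a})$ for $\tilde\alpha^{aN}$, so that $\tilde\alpha^{m}(U)=\lambda^{m}U$.

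With $\gamma=\alpha\otimes\alpha^{m}$ and implementing unitary $V=U\otimes U^{m}$ for $\tilde\gamma^N$, I get $\tilde\gamma(V)=\lambda\cdot\lambda^{m^2}V$. Aperiodicity of $\gamma$ and the value $N(\tilde\gamma)=N$ require $\gcd$ conditions. Since $\tilde\alpha^{j}$ is outer for $0<j<N$, the tensor product $\tilde\alpha^j\otimes\tilde\alpha^{mj}$ is outer (the first factor is outer, and the slice argument for factors applies). Hence $N(\tilde\gamma)=N$. Also $\gamma$ is aperiodic because $\alpha^n\otimes\alpha^{mn}$ is non-inner whenever $\alpha^n$ is; this again uses the slice argument, valid for simple $A$ with $M(A\otimes A)$.

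I then need $1+m^2\equiv-1\pmod{\mathrm{ord}\,\lambda}$, i.e. $m^2\equiv-2$. This is not always solvable. So the real obstacle is to find a combination giving exactly $\bar\lambda$, and the natural fix is to use the paper's earlier results on W-type/$\mathcal{Z}$-stable constructions (e.g. Lemma~\ref{lem:weakly-outer}) to realize arbitrary invariants. Failing that, I would build $\gamma$ by hand as $\mathrm{Ad}(W)\circ\sigma$ on $A\otimes A$ with a symmetry.

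The case $N(\tilde\alpha)=\infty$ is trivial: take $\gamma=\alpha$.
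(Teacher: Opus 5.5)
\textbf{Gap.} Your proposal never produces $\gamma$. It ends by conceding that the congruence $m^2\equiv -2 \pmod{\operatorname{ord}\lambda}$ need not be solvable, and indeed it fails when $\lambda$ has order $5$, since $3$ is not a square mod $5$. The fallbacks you list do not close this gap. Lemma~\ref{lem:weakly-outer} is about $\mathcal{Z}$ and only produces $\Lambda=1$. More importantly, the statement concerns an arbitrary simple separable monotracial $A$ with $A\cong A\otimes A$. Such an $A$ need not be $\mathcal{Z}$-stable or UHF-stable, so model automorphisms on $\mathcal{Z}$, $\mathcal{Q}$ or $M_{N^{\infty}}$ cannot be transported to $A$. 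The construction ``by hand'' as $\mathrm{Ad}(W)\circ\sigma$ is never specified. Your detours through $\alpha^{-1}$ and $A^{\mathrm{op}}$ are correctly diagnosed as dead ends, but they contribute nothing to the proof.

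\textbf{The missing idea.} Iterate the tensor product instead of tensoring with powers of $\alpha$. You correctly computed that $\alpha\otimes\alpha$ has $N=N(\tilde\alpha)$ and $\Lambda=\lambda^2$. The exponent became quadratic only because you replaced the second factor by $\alpha^m$. Since $A\cong A\otimes A$, we have $A\cong A^{\otimes k}$ for every $k$. The $k$-fold tensor product $\alpha^{\otimes k}$ then has the following properties:
\begin{itemize}
\item it is aperiodic, by the slice argument you already invoke;
\item $(\tilde\alpha^{\otimes k})^{N}=\mathrm{Ad}(U^{\otimes k})$;
\item $N(\tilde\alpha^{\otimes k})=N$, because $\tilde\alpha^{j}\otimes\cdots\otimes\tilde\alpha^{j}$ is outer for $0<j<N$;
\item $\Lambda(\tilde\alpha^{\otimes k})=\lambda^{k}$.
\end{itemize}
If $N\geq 2$, take $k=N-1$; since $\lambda^N=1$, this gives $\lambda^{N-1}=\overline{\lambda}$. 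If $N=1$, then $\lambda=1$ and $\gamma=\alpha$ already works.

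This is exactly the paper's proof, phrased there as a group argument. Consider the set of values $\Lambda(\tilde\beta)$ over aperiodic $\beta$ with $N(\tilde\beta)=N(\tilde\alpha)$. It is nonempty and closed under multiplication via $\beta_1\otimes\beta_2$. It lies inside the finite group of $N$-th roots of unity, so it is a subgroup. Hence it contains $\Lambda(\tilde\alpha)^{-1}=\overline{\Lambda(\tilde\alpha)}$.
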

\begin{proof}
Since the case where $N(\tilde{\alpha})=\infty$ is trivial, we may assume that  
$N(\tilde{\alpha})<\infty$. Put 
$$
G:= \{\Lambda (\tilde{\beta})\in\mathbb{T} \; | \; \beta \text{ is an aperiodic automorphism of } 
A \text{ such that } N(\tilde{\beta})=N(\tilde{\alpha})\}.
$$
Let $\beta_1$ and $\beta_2$ be aperiodic automorphisms of $A$ satisfying 
$N(\tilde{\beta}_1)=N(\tilde{\beta}_2)=N(\tilde{\alpha})$. Then 
$\beta_1\otimes \beta_2$ is an aperiodic automorphism of $A\otimes A\cong A$ such that 
$N(\tilde{\beta}_1\otimes \tilde{\beta}_2)=N(\tilde{\alpha})$. 
Since we also have $\Lambda (\tilde{\beta}_1\otimes \tilde{\beta}_2)=\Lambda (\tilde{\beta}_1)
\Lambda (\tilde{\beta}_2)$, $G$ is closed under multiplication. 
Furthermore, $G$ is a non-empty subset of a finite group $\{\lambda\in\mathbb{T}\; |\; 
\lambda^{N(\tilde{\alpha})}=1\}$. Hence $G$ is a subgroup. In particular, we have 
$\overline{\Lambda (\tilde{\alpha})}=\Lambda (\tilde{\alpha})^{-1}\in G$, which implies 
the conclusion. 
\end{proof}

\subsection{Central sequence C$^*$-algebras}

We recall the definitions and notation of Kirchberg's central sequence C$^*$-algebras \cite{Kir2}. 
Fix a free ultrafilter $\omega$ on $\mathbb{N}$. For a C$^*$-algebra $A$, put 
$$
A^{\omega}:= \ell^{\infty}(\mathbb{N}, A)/ \{\{x_n\}_{n\in\mathbb{N}}\; |\; \lim_{n\to\omega} \|x_n\| =0 \}.
$$
We denote by $(x_n)_n$ a representative of an element in $A^{\omega}$. 
Every automorphism $\alpha$ of $A$ induces an automorphism of $A^{\omega}$. 
We denote it by the same symbol $\alpha$ for simplicity. For a tracial state $\tau_A$ on $A$, define 
a tracial state $\tau_{A, \omega}$ on $A^{\omega}$ by $\tau_{A, \omega}((x_n)_n)=\lim_{n\to\omega}
\tau_A(x_n)$ for any $(x_n)_n\in A^{\omega}$. 
Let $B$ be a C$^*$-subalgebra of $A$. We identify $A$ and $B$ with C$^*$-subalgebras of 
$A^{\omega}$ consisting of equivalence classes of constant sequences. 
Put 
$$
\mathrm{Ann}(B, A^{\omega}):= \{(x_n)_n \in A^{\omega}\cap B^{\prime}\; |\; 
(x_n)_n b =0\text{ for any } b\in B\}
$$
and define 
$$
F(A):=(A^{\omega}\cap A^{\prime})/\mathrm{Ann}(A, A^{\omega}). 
$$
Note that $((A^{\sim})^{\omega}\cap A^{\prime})/\mathrm{Ann}(A, (A^\sim)^{\omega})$ 
and  $(M(A)^{\omega}\cap A^{\prime})/\mathrm{Ann}(A, M(A)^{\omega})$ are isomorphic to 
$F(A)$ (see \cite[Section 2.2]{Na2}). 
Every automorphism $\alpha$ of $A$ induces an 
automorphism of $F(A)$. We also denote it by the same symbol $\alpha$. 
If $A$ is monotracial, then  
$\tau_{A, \omega}$ induces a tracial state on $F(A)$ by \cite[Proposition 2.1]{Na4}. 
We denote it by the same symbol $\tau_{A, \omega}$ for simplicity.  

Let $A$ be a simple separable monotracial C$^*$-algebra. Put
$$
\mathcal{M}^{\omega}(A):=\ell^{\infty}(\mathbb{N}, \pi_{\tau_A}(A)^{''})/
\{\{x_n\}_{n\in\mathbb{N}}\; |\; \lim_{n\to\omega} \|x_n\|_2 =0 \}
$$
where $\| \cdot \|_2$ denotes the $2$-norm induced by $\tau_A$. 
Note that $\mathcal{M}^{\omega}(A)$ coincides with the ultrapower von Neumann algebra of 
$\pi_{\tau_A}(A)^{''}$. Define a homomorphism $\varrho$ from $A^{\omega}$ to 
$\mathcal{M}^{\omega}(A)$ by 
$$
\varrho ((x_n)_n):= (\pi_{\tau_A}(x_n))_n
$$ 
for any $(x_n)_n\in A^{\omega}$. Then $\varrho$ is surjective by the Kaplansky density theorem. 
Note that $\varrho$ extends to a homomorphism from $M(A)^{\omega}$ to 
$\mathcal{M}^{\omega}(A)$ because $\pi_{\tau_A}$ extends to $M(A)$. 
Put 
$$
\mathcal{M}_{\omega}(A):= \mathcal{M}^{\omega}(A)\cap \varrho(A)^{\prime}.
$$
Note that $\mathcal{M}_{\omega}(A)$ is the von Neumann algebraic central sequence algebra 
of $\pi_{\tau_A}(A)^{''}$. 
We see that $\varrho$ induces a surjective homomorphism from 
$F(A)$ to $\mathcal{M}_{\omega}(A)$. More generally, for any separable 
subset $S$ of $F(A)$, $\varrho$ induces a surjective homomorphism from 
$F(A)\cap S^{\prime}$ to $\mathcal{M}_{\omega}(A)\cap\varrho (S)^{\prime}$  
(see the argument following \cite[Proposition 3.7]{Na5}, as well as \cite[Theorem 3.3]{KR} and 
\cite[Theorem 3.1]{MS3}). 
We denote these homomorphisms by the same symbol $\varrho$ for simplicity. 
Note that for any unitary element $V$ in $\mathcal{M}_{\omega}(A)\cap\varrho (S)^{\prime}$, 
we can choose a unitary element $v$ in $F(A)\cap S^{\prime}$ such that $\varrho (v)=V$ 
because $\mathcal{M}_{\omega}(A)\cap\varrho (S)^{\prime}$ is a von Neumann algebra. 
Indeed, there exists a self-adjoint element $T$ in $\mathcal{M}_{\omega}(A)\cap\varrho
(S)^{\prime}$ such that $V=e^{iT}$ by the spectral theorem 
for von Neumann algebras. By surjectivity of $\varrho$, 
there exists a self-adjoint element $x$ in $F(A)\cap S^{\prime}$ such that $\varrho(x)=T$. 
Putting $v:= e^{ix}\in F(A)\cap S^{\prime}$, we obtain $\varrho (v)=V$. 
Every automorphism 
$\alpha$ of $A$ induces automorphisms of $\mathcal{M}^{\omega}(A)$ and $\mathcal{M}_{\omega}(A)$. 
We denote these automorphisms by $\tilde{\alpha}$ as well. 
By definition, we have $\varrho \circ \alpha = \tilde{\alpha}\circ \varrho$. 

\subsection{Approximate intertwining argument}

Szab\'o's approximate cocycle intertwining argument \cite{Sza7} can be regarded as 
an equivariant version of Elliott's approximate intertwining argument \cite{Ell0} (see also \cite{Ell1}).  
One feature of this argument is that 
a cohomology vanishing theorem is needed only at specific stages of the argument, 
unlike in the Evans--Kishimoto intertwining argument \cite{EK}. 
The following proposition is a restricted but simpler variant of 
\cite[Proposition 5.2]{Na5} (see also \cite[Proposition 3.4]{Sza7}). 

\begin{pro}\label{pro:intertwining}
Let $A$ be a separable C$^*$-algebra, and let $\alpha$ and $\beta$ be automorphisms of $A$. 
Assume that there exist increasing sequences $\{F_n\}_{n\in\mathbb{N}}$ and     
$\{G_n\}_{n\in\mathbb{N}}$ of finite subsets of $A$,  a decreasing sequence 
$\{\varepsilon_n\}_{n\in\mathbb{N}}$ of positive numbers, sequences $\{\varphi_n\}_{n\in\mathbb{N}}$ 
 and $\{\psi_n\}_{n\in\mathbb{N}}$ of nondegenerate endomorphisms of $A$, and 
sequences $\{u_n\}_{n\in\mathbb{N}}$ and $\{v_n\}_{n\in\mathbb{N}}$ of unitary elements 
in $A^{\sim}$ 
such that 
\begin{align*}
(\mathrm{i})\; & \| \varphi_n \circ \alpha (a) - \mathrm{Ad}(u_n)\circ \beta \circ \varphi_n (a)\| 
< \varepsilon_n, \; \text{for any } a\in F_n, \\
(\mathrm{ii})\; &  \| \psi_n \circ \beta (b) - \mathrm{Ad}(v_n)\circ \alpha \circ \psi_n (b)\| 
< \varepsilon_n, \; \text{for any } b\in G_n, \\
(\mathrm{iii})\; & \| \psi_n\circ \varphi_n (a)- a\|<\varepsilon_n, \; \text{for any } 
a\in F_n, \\
(\mathrm{iv})\; & \| \varphi_{n+1} \circ \psi_n (b)- b\|<\varepsilon_n, \; \text{for any } 
b\in G_n, \\
(\mathrm{v})\; & \| \psi_n\circ \varphi_n(a)(\psi_n(u_n)v_n-1) \|<\varepsilon_n, \; 
\text{for any } a\in F_n, \\
(\mathrm{vi})\; & \| \varphi_{n+1}\circ \psi_n(b)(\varphi_{n+1}(v_n)u_{n+1}-1) \|<\varepsilon_n, \; 
\text{for any } b\in G_n, \\
(\mathrm{vii})\; & \varphi_n(F_n)\subset G_{n}, \quad \psi_{n}(G_n)\subset F_{n+1}, \\
(\mathrm{viii})\; & F_nv_n \subset F_{n+1}, \quad G_nu_{n+1}\subset G_{n+1},  \\
(\mathrm{ix})\; & \overline{\bigcup_{n=1}^{\infty} F_n}= 
\overline{\bigcup_{n=1}^{\infty}G_n}=A, \quad \sum_{n=1}^{\infty}\varepsilon_n<\infty.
\end{align*}
Then $\alpha$ is outer conjugate to $\beta$. 
\end{pro}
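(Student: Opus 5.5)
The plan is a two-sided approximate intertwining in the style of Elliott: build an automorphism $\theta$ of $A$ as a limit of the $\varphi_n$, show that $\psi_n$ converges to $\theta^{-1}$, and then use the unitaries $u_n$ and $v_n$ to produce a single unitary $u\in\mathcal{U}(M(A))$ with $\theta\circ\alpha\circ\theta^{-1}=\mathrm{Ad}(u)\circ\beta$. This gives outer conjugacy.

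\textbf{Step 1: the limit maps.} From (iii), (iv) and (vii) we get, for $a\in F_n$,
\[
\|\varphi_{n+1}(a)-\varphi_n(a)\| \le \|\varphi_{n+1}(a)-\varphi_{n+1}\circ\psi_n\circ\varphi_n(a)\| + \|\varphi_{n+1}\circ\psi_n(\varphi_n(a))-\varphi_n(a)\| < 2\varepsilon_n .
\]
The first term uses (iii) and contractivity of $\varphi_{n+1}$. The second uses (iv), since $\varphi_n(a)\in G_n$.

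Since the $F_n$ increase, $\sum\varepsilon_n<\infty$ and the maps are contractive, $\varphi_n(a)$ is Cauchy for every $a\in\bigcup F_n$. Hence $\varphi_n$ converges pointwise on $A$ to an endomorphism $\theta$, using (ix). In the same way $\psi_n$ converges to an endomorphism $\theta'$.

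Conditions (iii) and (iv) then give $\theta'\circ\theta=\mathrm{id}$ and $\theta\circ\theta'=\mathrm{id}$. So $\theta$ is an automorphism with inverse $\theta'$.

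\textbf{Step 2: the cocycle.} Set $w_n:=\varphi_n(v_{n-1})\cdots$. More simply, define the products
\[
U_n:=u_n\,\varphi_n(v_{n-1})\,u_{n-1}\cdots.
\]
Rather than guess the formula, the idea is as follows. Conditions (v) and (vi) say that $\psi_n(u_n)v_n\approx 1$ and $\varphi_{n+1}(v_n)u_{n+1}\approx 1$, but only after multiplication by elements of the images of $F_n$ and $G_n$. So the $u_n$ should converge in the strict topology of $M(A)$ to some $u$, and the $v_n$ to $v$, with $\theta'(u)v=1$.

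Concretely, for $b\in G_n$, applying (vi) with the approximation $\varphi_{n+1}\circ\psi_n(b)\approx b$ from (iv) gives
\[
\|b(\varphi_{n+1}(v_n)u_{n+1}-1)\|<2\varepsilon_n .
\]
Applying $\varphi_{n+1}$ to (v) relates $\varphi_{n+1}(v_n)$ to $\varphi_{n+1}\circ\psi_n(u_n^*)$, which is close to $u_n^*$ on the left ideal generated by $G_n$. Combining the two yields $\|b(u_{n+1}-u_n)\|\lesssim 4\varepsilon_n$ for $b\in G_n$. By (viii), $G_nu_{n+1}\subset G_{n+1}$, so the same estimate can be iterated and also taken on the other side using adjoints.

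Since $\bigcup G_n$ is dense and $\sum\varepsilon_n<\infty$, the sequence $(u_n)$ is strictly Cauchy. Its strict limit $u$ is a unitary in $M(A)$, since strict limits of unitaries whose adjoints also converge strictly are unitary. This is where the unitization $A^{\sim}$ gets replaced by $M(A)$.

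\textbf{Step 3: conclusion.} Pass to the limit in (i). For $a\in\bigcup F_n$ we get
\[
\theta\circ\alpha(a)=\lim_n \mathrm{Ad}(u_n)\circ\beta\circ\varphi_n(a)=\mathrm{Ad}(u)\circ\beta\circ\theta(a).
\]
This uses the strict convergence $u_n\to u$ together with the norm convergence $\beta\varphi_n(a)\to\beta\theta(a)$, since $x\mapsto u_nxu_n^*$ converges in norm for $x$ in the norm-compact set of such limits. Density then gives $\alpha=\mathrm{Ad}(\theta^{-1}(u))\circ\theta^{-1}\circ\beta\circ\theta$, so $\alpha$ is outer conjugate to $\beta$.

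\textbf{Main obstacle.} I expect the bookkeeping in Step 2 to be the hardest part. There we must verify that the one-sided estimates (v) and (vi), together with the enlargement conditions (viii), really give strict convergence of $u_n$ on both sides. This is needed so that the limit is a genuine unitary multiplier and not merely an isometry. I would try to handle the right-sided estimates $b u_n$ first and then obtain the left-sided ones by considering $u_n^*b$ and using (ii) symmetrically.
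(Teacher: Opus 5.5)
Your Step 1 and the right-multiplication half of Step 2 follow the paper's argument. The paper also shows $\|bu_{n+1}-bu_n\|<2\varepsilon_{n-1}+4\varepsilon_n$ from (iv), (vi), (v), (vii), (viii). There the index is arranged so that $b\in G_{n-1}$, which is what puts $bu_n$ in $G_n$ and $\psi_{n-1}(b)$ in $F_n$.

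The genuine gap is the other half of strict convergence. Taking adjoints of $\|b(u_{n+1}-u_n)\|$ gives $\|(u_{n+1}^*-u_n^*)b^*\|$. That concerns $u_n^*$ acting on the left, and it carries no information beyond what you already have. The same is true of ``considering $u_n^*b$''. What is needed is that $\{u_nb\}_n$ is Cauchy for every $b\in A$, and your proposal gives no argument for it. Without it there is no multiplier $u$, and Step 3 fails, since $u_nxu_n^*\to uxu^*$ requires $u_nx\to ux$.

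The paper closes this gap by using (i) already at this stage. Multiplying (i) on the right by $u_m$ gives $\|u_m\beta(\varphi_m(a))-\varphi_m(\alpha(a))u_m\|<\varepsilon_m$ for $a\in F_m$. The sequence $\{\varphi_m(\alpha(a))u_m\}_m$ is Cauchy, because $\varphi_m(\alpha(a))\to\theta(\alpha(a))$ in norm and $\{\theta(\alpha(a))u_m\}_m$ is Cauchy by the right-sided result. Hence $\{u_n\beta(\theta(a))\}_n$ is Cauchy for $a\in\bigcup_n F_n$. Since $\beta\circ\theta$ is surjective (here the two-sided intertwining of Step 1 matters) and $\bigcup_n F_n$ is dense, $\{u_nb\}_n$ is Cauchy for every $b\in A$.

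An alternative closer to your ``symmetric'' idea also works. Mirror the right-sided computation, using (iii), (v), (vi), (vii) and $F_nv_n\subset F_{n+1}$, to show that $\{av_n\}_n$ is Cauchy for all $a$. Then the adjoint of (vi), combined with (iv), gives $\|u_{n+1}b^*-\varphi_{n+1}(v_n^*\psi_n(b^*))\|<2\varepsilon_n$ for $b\in G_n$, and the second term converges. Neither route needs (ii) for this step. Once both halves are in place, $u_n$ converges strictly to a unitary $u\in M(A)$, and your Step 3 is correct.
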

\begin{proof}
By (iii), (iv), (vii), and (ix), Elliott's approximate intertwining argument (see \cite[Proposition 2.3.2]{Ror}) 
implies that there exist automorphisms $\varphi$ and $\psi$ of $A$ 
such that 
$$
\varphi (a)=\lim_{n\to\infty}\varphi_n(a) \quad \text{and} \quad \psi (b)=\lim_{n\to\infty}\psi_n(b)
\eqno{(2.2.1)}
$$
for any $a, b\in A$. 
Let $n_0\in \mathbb{N}$. 
For any $b\in G_{n_0}$ and $n>n_0$, we have 
\begin{align*}
\|bu_{n+1}-bu_{n} \| & = \|b(1-u_nu_{n+1}^*)\|  \\
&=\| b- \varphi_{n+1}\circ \psi_n (bu_n)u_{n+1}^*
+(\varphi_{n+1}\circ \psi_n (bu_n)-bu_n)u_{n+1}^* \| \\
(\mathrm{iv}), (\mathrm{viii})\; 
& < \| b- \varphi_{n+1}\circ \psi_n (bu_n)u_{n+1}^*\| +\varepsilon_n \\
&= \| b- \varphi_{n+1}\circ \psi_n (bu_n)\varphi_{n+1}(v_{n})
\\
&\quad +\varphi_{n+1}\circ \psi_n (bu_n) (\varphi_{n+1}(v_{n})-u_{n+1}^*)\|+\varepsilon_n \\
(\mathrm{vi}), (\mathrm{viii})\;
& < \| b- \varphi_{n+1}(\psi_n(b)\psi_n (u_n)v_{n})\|+2\varepsilon_n \\
& = \| b- \varphi_{n+1}(\psi_n(\varphi_n\circ \psi_{n-1}(b))\psi_n (u_n)v_n) \\
&\quad +\varphi_{n+1}(\psi_n(\varphi_n\circ \psi_{n-1}(b)-b)\psi_n (u_n)v_n)\|+2\varepsilon_n \\
(\mathrm{iv})\; 
& < \| b- \varphi_{n+1}(\psi_n\circ \varphi_n (\psi_{n-1}(b))\psi_n (u_n)v_n)\| +\varepsilon_{n-1}+
2\varepsilon_n \\
&= \| b- \varphi_{n+1}(\psi_n\circ \varphi_n (\psi_{n-1}(b))) \\
&\quad +\varphi_{n+1}(\psi_n\circ \varphi_n (\psi_{n-1}(b))(1-\psi_n (u_n)v_n))\| +\varepsilon_{n-1}+
2\varepsilon_n \\
(\mathrm{v}), (\mathrm{vii})\; 
& < \| b- \varphi_{n+1}(\psi_n\circ \varphi_n (\psi_{n-1}(b)))\| +\varepsilon_{n-1}+3\varepsilon_n \\
& =\| b- \varphi_{n+1}(\psi_n (b))+
\varphi_{n+1}(\psi_n (b- \varphi_n \circ \psi_{n-1}(b)))\| +\varepsilon_{n-1}+3\varepsilon_n \\
(\mathrm{iv})\;  
& < 2\varepsilon_{n-1}+4\varepsilon_n < 6\varepsilon_{n-1}. 
\end{align*}
Hence we see that $\{bu_n\}_{n\in\mathbb{N}}$ is a Cauchy sequence for any $b\in A$ by (ix). 
Let $n_0\in \mathbb{N}$, and let $a\in F_{n_0}$. 
For any $\varepsilon>0$, there exists a natural number $N\geq n_0$ such that 
$$
\| \varphi_n(a)-\varphi(a)\|<\varepsilon, \quad 
\| \varphi_n(\alpha(a))-\varphi(\alpha(a))\|<\varepsilon, \eqno{(2.2.2)}
$$
$$
\|\varphi(\alpha(a))u_{m}-\varphi(\alpha(a))u_{n}\|
<\varepsilon, \eqno{(2.2.3)}
$$
and $\varepsilon_n< \varepsilon$ for any $m, n\geq N$ 
since (2.2.1) holds and $\{\varphi(\alpha(a))u_n\}_{n\in\mathbb{N}}$ is a Cauchy sequence. 
Then we have 
\begin{align*}
\|u_{m}\beta(\varphi (a))&-u_{n}\beta (\varphi (a))\| \\
(2.2.2)\; 
&< \|u_{m}\beta(\varphi_m(a))-u_{n}\beta(\varphi_{n}(a))\|+2\varepsilon \\
(\mathrm{i})\; 
&< \|\varphi_{m}(\alpha (a))u_{m}-\varphi_{n}(\alpha(a))u_{n}\| +2\varepsilon+\varepsilon_{m}
+\varepsilon_n \\
(2.2.2)\; 
&< \|\varphi(\alpha (a))u_{m}-\varphi(\alpha(a))u_{n}\|+4\varepsilon+\varepsilon_{m}
+\varepsilon_n \\
(2.2.3)\;
&< 5\varepsilon+\varepsilon_{m}+\varepsilon_n<7\varepsilon
\end{align*}
for any $m,n\geq N$. Therefore $\{u_{n}\beta(\varphi(a))\}_{n\in\mathbb{N}}$ is a Cauchy sequence. 
Since $\varphi$ and $\beta$ are automorphisms and $\overline{\bigcup_{n\in\mathbb{N}}F_n}=A$ 
by (ix), $\{u_{n}b\}_{n\in\mathbb{N}}$ is a Cauchy sequence for any $b\in A$. 
Consequently, $\{u_n\}_{n\in\mathbb{N}}$ is a strict Cauchy sequence of unitary 
elements in $A^{\sim}$. Since $M(A)$ is strictly complete, 
there exists a unitary element $u$ in $M(A)$ such that $\lim_{n\to \infty}u_{n}b=ub$ for any 
$b\in A$. The rest of the proof is the same as the proof of \cite[Proposition 5.2]{Na5}. 
\end{proof}

\subsection{Small Rohlin towers}

The following lemma and theorem are based on 
Kishimoto's theorem \cite[Theorem 2.1]{K}, Sato's observation \cite[Proposition 2.1]{Sa2}, Sato and 
Matui--Sato's techniques \cite{Sa0, MS, MS2}, 
and Szab\'o's argument for the equivariant property (SI) \cite[Theorem 4.2]{Sza6} 
(see also \cite[Theorem 3.5]{Na5}).  

\begin{lem}\label{lem:equivariant-si}
Let $A$ be a simple separable non-type I nuclear monotracial C$^*$-algebra with strict comparison, 
and let $\alpha$ be an aperiodic automorphism of $A$. If $e$ and $f$ are positive contractions in 
$A^{\omega}\cap A^{\prime}$ satisfying 
$$
\tau_{A, \omega}(e)=0\quad \text{and}\quad \inf_{m\in\mathbb{N}}\tau_{A, \omega}(f^{m})>0,
$$
then there exists an element $r$ in $A^{\omega}\cap A^{\prime}$ such that 
$$
fr=r, \quad r^*r=e, \quad \text{and}\quad r^*\alpha^{k}(r)=0
$$
for any $k\in\mathbb{Z}\setminus\{0\}$. 
\end{lem}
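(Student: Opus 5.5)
We prove this in two stages: first an equivariant version of property (SI) with a single \emph{small Rohlin tower}, then a reindexation that upgrades the orthogonality from finitely many $k$ to all $k\neq 0$.

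\textbf{Step 1 (small towers).} Since $A$ is simple, separable and non-type I, $\alpha^n$ is not inner for any $n\neq 0$. Hence Kishimoto's theorem \cite[Theorem 2.1]{K}, in the form of Sato's observation \cite[Proposition 2.1]{Sa2}, applies. For any $N\in\mathbb{N}$ it provides positive contractions in $A^{\omega}\cap A^{\prime}$ with two properties: their $\alpha^k$-translates, $1\le |k|\le N$, are mutually orthogonal, and they have trace bounded below. Taking a tower of height $N$ (roughly $N$ mutually orthogonal translates) and averaging shows the following. There is a positive contraction $g\in A^{\omega}\cap A^{\prime}$ with $g\alpha^k(g)=0$ for $1\le|k|\le N$ and $\tau_{A,\omega}(g)\ge c/N$, where $c>0$ is absolute. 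Its spectral powers satisfy $\inf_m\tau_{A,\omega}(g^m)\ge c/N$ as well.

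Because $f$ has $\inf_m\tau_{A,\omega}(f^m)>0$, we want $g$ to live under $f$. The plan is to run the Kishimoto construction inside the hereditary part cut down by $f$. Since $f$ is central, $fg$-type products stay in $A^{\omega}\cap A^{\prime}$. Concretely, we replace $g$ by $f_\delta g f_\delta$, where $f_\delta=h_\delta(f)$ is a continuous cut-off equal to $1$ near $1$ and satisfying $f f_\delta\approx f_\delta$.

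A further ingredient, the Matui--Sato technique \cite{MS,MS2}, is that the trace on $A^{\omega}\cap A^{\prime}$ agrees with the ultrapower trace, using nuclearity and monotraciality. Hence tracial lower bounds can be achieved simultaneously for $g$ and for its product with $f$. This gives $g$ with $fg=g$ (exactly, after reindexing), orthogonal translates for $|k|\le N$, and $\inf_m\tau(g^m)>0$.

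\textbf{Step 2 (property (SI)).} Now apply property (SI), which follows from nuclearity and strict comparison by Matui--Sato \cite{MS}, in its form from \cite{Sa0}, to the pair $e$ and $g$. Since $\tau_{A,\omega}(e)=0$ and $\inf_m\tau(g^m)>0$, (SI) yields $s\in A^{\omega}\cap A^{\prime}$ with $gs=s$ and $s^*s=e$.

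We then check the required relations. First, $fs=fgs=gs=s$. Second, for $1\le|k|\le N$ we have $s^*\alpha^k(s)=s^*g\,\alpha^k(g)\alpha^k(s)=0$.

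\textbf{Step 3 (all $k$).} It remains to pass to all $k\in\mathbb{Z}\setminus\{0\}$, which we do by Kirchberg's $\varepsilon$-test / diagonal reindexation. For each $N$ we have $r_N$ with $fr_N=r_N$, $r_N^*r_N=e$, and $r_N^*\alpha^k(r_N)=0$ for $|k|\le N$. We lift these to representative sequences and choose a diagonal subsequence along $\omega$ that works against increasing finite subsets of $A$ and increasing ranges of $k$. The diagonal element $r$ lies in $A^{\omega}\cap A^{\prime}$, satisfies all relations exactly, and in particular $r^*\alpha^k(r)=0$ for every $k\neq0$. This is the argument of \cite[Theorem 4.2]{Sza6} and \cite[Theorem 3.5]{Na5}.

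\textbf{Main obstacle.} The hard step is Step 1. It requires producing, under $f$, a positive element with orthogonal $\alpha$-translates whose trace is uniformly bounded below, in the sense that $\inf_m\tau(g^m)>0$ and not merely $\tau(g)>0$. This is what allows (SI) to be invoked. I would first try Sato's approach: apply Kishimoto's Rohlin-type lemma in the von Neumann algebra $\mathcal{M}_{\omega}(A)\cap\{\varrho(f)\}^{\prime}$, where aperiodicity gives genuine Rohlin projections. I would then lift the projections through the surjection $\varrho$, as in the paper's lifting of unitaries, and fix the norm-orthogonality afterwards by functional calculus cut-offs.
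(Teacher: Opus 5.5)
Step~1 is not merely the hard step: in the generality of the lemma it is false, so the (SI)-based plan cannot work. The lemma is stated for an arbitrary aperiodic $\alpha$. This includes automorphisms with $N(\tilde{\alpha})<\infty$, which are the main point of the paper; see Lemma~\ref{lem:weakly-outer}. Suppose $N_0:=N(\tilde{\alpha})<\infty$, so that $\tilde{\alpha}^{N_0}=\mathrm{Ad}(U)$ with $U\in\pi_{\tau_A}(A)^{''}$. This automorphism acts trivially on $\mathcal{M}_{\omega}(A)$, because its elements commute with $\pi_{\tau_A}(A)^{''}$. Hence any positive contraction $g\in A^{\omega}\cap A^{\prime}$ with $g\alpha^{N_0}(g)=0$ satisfies $\varrho(g)^2=\varrho(g)\tilde{\alpha}^{N_0}(\varrho(g))=0$. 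So $\varrho(g)=0$ and $\tau_{A,\omega}(g)=0$. Thus for $N\geq N_0$ no central $g$ has both orthogonal translates for $1\leq|k|\leq N$ and positive trace, and (SI) can never be applied to the pair $(e,g)$.

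Your fallback fails for the same reason: $\tilde{\alpha}$ is not aperiodic on $\mathcal{M}_{\omega}(A)$, so there are no Rohlin projections there to lift. This is why the towers in this subsection are ``small'', i.e.\ they live in $\ker(\varrho)$. Three smaller points:
\begin{itemize}
\item Even when $N(\tilde{\alpha})=\infty$, producing your $g$ amounts to a weak Rohlin property. That is a substantial theorem, not a consequence of averaging and cut-offs.
\item $\tau_{A,\omega}(g)\geq c/N$ does not give $\inf_m\tau_{A,\omega}(g^m)>0$.
\item $\alpha^n\notin\mathrm{Inn}(A)$ is the aperiodicity hypothesis, not a consequence of $A$ being simple and non-type I.
\end{itemize}

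The missing idea is to get the orthogonality from \emph{non-central} elements of $A$, and let the tracial nullity of $e$ do the work you wanted a large central $g$ to do. This is what the paper does.
\begin{itemize}
\item Each $\alpha^k$ with $k\neq0$ is outer on the simple algebra $A$. So Kishimoto's theorem \cite[Theorem 2.1]{K}, together with a diagonal argument, gives positive norm-one elements $b_n\in A$ with $\|b_n\alpha^k(b_n)\|\to0$ for every $k\neq0$.
\item Since $\tau_{A,\omega}(e)=0$ and $\inf_m\tau_{A,\omega}(f^m)>0$, the excision and strict comparison machinery of \cite[Lemma 3.2 and Proposition 3.3]{Na5} produces $r_n\in A^{\omega}$, not central, with $fr_n=r_n$, $\|b_nr_n-r_n\|\to0$ and $\|r_n^*ar_n-ae\|\to0$ for $a\in A^{\sim}$.
\item Because $r_n$ is almost supported under $b_n$, we get $\|r_n^*\alpha^k(r_n)\|\to0$.
\item A diagonal argument (your Step~3, which is fine) gives $r\in A^{\omega}$ with $fr=r$, $r^*ar=ae$ and $r^*\alpha^k(r)=0$ for all $k\neq0$.
\item Centrality is then automatic. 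Using $r^*r=e$ and $e\in A^{\prime}$, one computes $(ar-ra)^*(ar-ra)=0$ for every $a\in A$.
\end{itemize}
No tower of positive trace is ever needed, which is exactly why the argument survives when $N(\tilde{\alpha})<\infty$.
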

\begin{proof}
By essentially the same argument as in the first part of the proof of 
\cite[Theorem 3.5]{Na5}, we can prove this lemma. We give a sketch of the proof for the reader's 
convenience.  
By \cite[Lemma 3.2 and Proposition 3.3]{Na5}, we obtain sequences $\{r_n\}_{n\in\mathbb{N}}$
of elements in $A^{\omega}$ and $\{b_n\}_{n\in\mathbb{N}}$ of positive elements in 
$A$ of norm one such that  
$$
fr_n=r_n, \quad \lim_{n\to\infty}\| b_nr_n -r_n\|=0, \quad \lim_{n\to\infty}\|r^*_nar_n-ae\|=0, 
$$
and
$$
\lim_{n\to\infty}\| b_n\alpha^{k}(b_n)\|=0
$$
for any $a\in A^{\sim}$ and $k\in\mathbb{Z}\setminus\{0\}$. Note that we have 
$\lim_{n\to \infty}\| r_n^*\alpha^{k}(r_n)\|=0$ for any $k\in\mathbb{Z}\setminus\{0\}$ since   
$\lim_{n\to\infty}\| b_nr_n -r_n\|=0$ and $\lim_{n\to\infty}\| b_n\alpha^{k}(b_n)\|=0$. 
By the diagonal argument, we obtain 
a contraction $r$ in $A^{\omega}$ such that $fr=r$, $r^*ar=ae$, and 
$r^*\alpha^{k}(r)=0$ for any $a\in A^{\sim}$ and $k\in\mathbb{Z}\setminus\{0\}$. 
Since we have $r^*ar=ae$ and $e\in A^{\omega}\cap A^{\prime}$, 
$r$ is an element in $A^{\omega}\cap A^{\prime}$ 
(see the proof of~\cite[Theorem 3.5]{Na5} for details). Hence $r$ is the desired element. 
\end{proof}

\begin{thm}\label{thm:small-rohlin}
Let $A$ be a simple separable non-type I nuclear monotracial C$^*$-algebra with strict comparison, 
and let $\alpha$ be an aperiodic automorphism of $A$. If $p$ is a projection in 
$\ker(\varrho)\subset F(A)$, then there exist a projection $q$ and an element $s$ in 
$F(A)$ such that 
$$
q\alpha^{k}(q)=0, \quad pq=0, \quad s^*s=p, \quad \text{and}\quad ss^*=q
$$
for any $k\in \mathbb{Z}\setminus\{0\}$. 
\end{thm}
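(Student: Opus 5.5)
The plan is to apply Lemma~\ref{lem:equivariant-si} with $e$ a lift of $p$ and $f$ a lift of $1-p$, and then take $q:=ss^*$ for $s$ the image of $r$.

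\textbf{Lifting.} Regard $p$ as an element of $(M(A)^{\omega}\cap A')/\mathrm{Ann}(A,M(A)^{\omega})$. Choose a positive contraction lift $e_0$ of $p$ in $M(A)^{\omega}\cap A'$. To get an element of $A^{\omega}\cap A'$, fix a strictly positive element $h\in A$ of norm one, e.g.\ an approximate unit element. By a standard reindexation (Kirchberg's $\varepsilon$-test) I would replace $e_0$ by $e:=(h_n^{1/2}e_{0,n}h_n^{1/2})_n$, where $(h_n)$ is a quasicentral approximate unit chosen diagonally. Then $e$ is a positive contraction in $A^{\omega}\cap A'$ with $[e]=p$ in $F(A)$.

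\textbf{Trace conditions.} Since $\varrho(p)=0$, we have $\tau_{A,\omega}(e)=0$: the trace on $F(A)$ is computed via $\varrho$ and $\tau$ on $\mathcal M^\omega$, with $\tau_{A,\omega}([x])=\lim\tau(x_n)$ for $x$ central. This uses \cite[Proposition 2.1]{Na4} and that the $2$-norm of $\varrho(e)$ vanishes. Similarly let $f\in A^{\omega}\cap A'$ be a positive contraction lift of $1-p$. Then $\varrho(f)=1$ in $\mathcal M_\omega(A)$, so $\tau_{A,\omega}(f^m)=1$ for all $m$, and the infimum is $1>0$.

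\textbf{Applying the lemma.} Lemma~\ref{lem:equivariant-si} gives $r\in A^{\omega}\cap A'$ with $fr=r$, $r^*r=e$, and $r^*\alpha^k(r)=0$ for $k\neq0$. Let $s:=[r]\in F(A)$ and $q:=ss^*$. Then:
\begin{itemize}
\item $s^*s=[e]=p$, so $s$ is a partial isometry and $q$ is a projection.
\item Since $(1-p)s=s$, we get $pq=pss^*=0$.
\item $q\alpha^k(q)=s s^*\alpha^k(s)\alpha^k(s)^*=0$, because $s^*\alpha^k(s)=[r^*\alpha^k(r)]=0$.
\end{itemize}

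\textbf{Main obstacle.} The delicate point is the lifting: ensuring that the lifts live in $A^{\omega}\cap A'$ rather than $M(A)^\omega$. It also needs care that equalities pass correctly to $F(A)$, in particular that $r^*r=e$ exactly gives $s^*s=p$. If $A$ is unital this is immediate. In the nonunital case I would rely on the identification of $F(A)$ with the multiplier version from \cite[Section 2.2]{Na2}. I would also check that Lemma~\ref{lem:equivariant-si} is insensitive to modifying $e$ and $f$ by elements of $\mathrm{Ann}$. If needed, I would instead pass to $f$ defined by functional calculus after cutting by $h$, so that $\tau(f^m)$ still tends to $1$, using monotraciality and $\tau(h_n)\to1$.
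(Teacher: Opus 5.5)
Your proposal is correct and is essentially the paper's proof: lift $p$ to $e$ and $1-p$ to $f$ in $A^{\omega}\cap A'$, apply Lemma~\ref{lem:equivariant-si}, and set $s:=[r]$, $q:=ss^*=[rr^*]$. The only difference is that the paper obtains $f$ as $(h_n)_n-e$, after cutting $e$ down by $(h_n^{1/2})_n$ so that $e\le (h_n)_n$; you instead lift $1-p$ independently, which works equally well because the lemma imposes no relation between $e$ and $f$, and the paper's $(h_n^{1/2})_n$-cut serves only to build this particular $f$. The lifting ``obstacle'' you flag is not one: $F(A)$ is defined as a quotient of $A^{\omega}\cap A'$, so $p$ and $1-p$ lift directly to positive contractions there, with no need for $M(A)^{\omega}$, quasicentral approximate units, or the $\varepsilon$-test. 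Any approximate unit $(h_n)_n$ already lies in $A^{\omega}\cap A'$.
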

\begin{proof}
Let $\{h_n\}_{n\in\mathbb{N}}$ be an approximate unit for $A$. 
Choose a positive contraction $e$ in $A^{\omega}\cap A^{\prime}$ such that $p=[e]$. 
Replacing $e$ with $(h_n^{1/2})_ne(h_n^{1/2})_n$, we may assume that $(h_n)_n\geq e$. 
Put $f:=(h_n)_n-e$. Then $f$ is a positive contraction in $A^{\omega}\cap A^{\prime}$. 
Since $[e]=p$ is a projection in $\ker(\varrho)$ and $[f]=1-p$ is a projection, 
we have $\tau_{A, \omega}(e)=\tau_{A, \omega}(p)=0$ and 
$\inf_{m\in\mathbb{N}}\tau_{A, \omega}(f^m)=\tau_{A, \omega}(1-p)=1>0$. 
Hence there exists an element $r$ in $A^{\omega}\cap A^{\prime}$ such that 
$fr=r$, $r^*r=e$, and $r^*\alpha^{k}(r)=0$ for any $k\in \mathbb{Z}\setminus\{0\}$ by Lemma 
\ref{lem:equivariant-si}. 
Put $q:= [rr^*]$ and $s:=[r]$. Then $q$ and $s$ are the desired elements. 
\end{proof}

\section{Weak stability}\label{sec:weak}

In this section, we introduce a notion of weak stability for 
automorphisms of simple separable monotracial C$^*$-algebras. 
Moreover, we prove an absorption theorem (Theorem~\ref{thm:absorption-trivial-char}) 
for weakly stable automorphisms. 

\begin{Def}
Let $A$ be a simple separable monotracial C$^*$-algebra, and let $\alpha$ be an automorphism 
of $A$. We say that $\alpha$ is \textit{weakly stable} if for any unitary element $u$ in $F(A)$ satisfying  
$\varrho (u)=1$, there exist a unitary element $v$ in $F(A)$ and a complex number 
$\lambda$ of modulus one such that $\lambda u=v\alpha (v^*)$. 
\end{Def} 

Part (3) of the following remark is based on a comment by Sato at a seminar held at Kyushu University. 

\begin{rem}
(1) Usually, an automorphism $\alpha$ of $A$ is said to be \textit{stable} if for any unitary element 
$u$ in $F(A)$, there exists a unitary element $v$ in $F(A)$ such that $u=v\alpha(v^*)$. See, 
for example, the introduction of \cite{HO}. 
One of the key points of weak stability is that we only consider unitary elements $u$ satisfying 
$\varrho(u)=1$. \ \\ 
(2) Based on the works of Connes \cite{C2} and Herman--Ocneanu \cite{HO}, 
Kishimoto showed that if $\alpha$ is an automorphism of a 
UHF algebra $A$ satisfying $N(\tilde{\alpha})=\infty$, then  $\alpha$ is stable 
(see \cite{Kis0, Kis00, Kis3}). 
In particular, $\alpha$ is weakly stable. 
\ \\
(3) Let $\alpha$ be an automorphism of the Jiang--Su algebra $\mathcal{Z}$. 
Then $\alpha$ is not stable. Indeed, let 
$\lambda=e^{2\pi i \theta}$, where $\theta$ is an irrational number. Then $\lambda 1_{F(\mathcal{Z})}$ 
is a unitary element in $F(\mathcal{Z})$. 
Suppose that there exists a unitary element $v$ in $F(\mathcal{Z})$ such that 
$\lambda 1_{F(\mathcal{Z})}=v\alpha(v^*)$. Since $\alpha$ is approximately inner, 
\cite[Lemma 4.3]{M1} implies that there exists a unitary element $w$ in $F(\mathcal{Z})$ such that 
$\alpha(v^*)=wv^*w^*$. Therefore the C$^*$-subalgebra of $F(\mathcal{Z})$ generated by 
$v$ and $w$ is isomorphic to the irrational rotation algebra $A_{\theta}$ 
because we have $vw=\lambda wv$ and $A_{\theta}$ is simple. 
However, $A_{\theta}$ has a non-trivial projection, while $F(\mathcal{Z})$ has no non-trivial projections. 
This is a contradiction. Consequently, no automorphism of $\mathcal{Z}$ is stable. 
On the other hand, Sato showed that if $\alpha$ is an automorphism of $\mathcal{Z}$ satisfying 
$N(\tilde{\alpha})=\infty$, then for any unitary element $u$ in $F(\mathcal{Z})$, 
there exist a unitary element $v$ in $F(\mathcal{Z})$ and a complex number $\lambda$ 
of modulus one such that $\lambda u=v\alpha (v^*)$ (see \cite[Theorem 1.2]{Sa00} and 
\cite[Corollary 5.6]{Sa}). In particular, $\alpha$ is weakly stable.  
Another key point of weak stability is that we allow a scalar multiple $\lambda u$. \ \\
(4) For automorphisms of simple separable monotracial C$^*$-algebras with non-trivial $K_1$-groups, 
it seems necessary to modify the definition of weak stability. However, we do not consider such cases 
in this paper for simplicity. 
\end{rem}

The following proposition is a cohomology vanishing type result for weakly stable automorphisms. 
Note that we adopt the convention that $u\alpha (u)\cdots \alpha^{k}(u)=u$ when $k=0$. 

\begin{pro}\label{pro:cohomology-vanishing}
Let $A$ be a simple separable nuclear monotracial C$^*$-algebra, and let $\alpha$ be a weakly 
stable automorphism of $A$. \ \\
(1) If $N(\tilde{\alpha})<\infty$ and $u$ is a unitary element in $F(A)$ such that 
$$
\varrho (u\alpha (u)\cdots\alpha^{N(\tilde{\alpha})-1}(u))=1,
$$ 
then there exist a unitary element $v$ in $F(A)$ and a complex number $\lambda$ of modulus one 
such that $\lambda u=v\alpha (v^*)$. \ \\
(2) If $N(\tilde{\alpha})=\infty$ and $u$ is a unitary element in $F(A)$, then  there exist a unitary 
element $v$ in $F(A)$ and a complex number $\lambda$ of modulus one such that 
$\lambda u=v\alpha (v^*)$.
\end{pro}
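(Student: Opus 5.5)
The plan is to reduce both statements to the definition of weak stability. To do this, I will first solve the cohomology problem in the von Neumann algebra $\mathcal{M}_{\omega}(A)$, using Connes' results for the induced automorphism $\tilde{\alpha}$, and then lift the solution back to $F(A)$.

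The first step is von Neumann algebraic. Put $U:=\varrho(u)$, a unitary in $\mathcal{M}_{\omega}(A)$.

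In case (2), $N(\tilde{\alpha})=\infty$. Connes' noncommutative Rohlin theorem, together with his $1$-cohomology vanishing lemma for aperiodic automorphisms of the central sequence algebra, should then provide a unitary $W$ in $\mathcal{M}_{\omega}(A)$ with $U=W\tilde{\alpha}(W^*)$. These results are available for central sequence algebras of McDuff-type factors; in general I would use the version for $\pi_{\tau_A}(A)''$ being the injective II$_1$ factor, which holds here because $A$ is nuclear.

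In case (1), let $N=N(\tilde{\alpha})$. The hypothesis says $U\tilde{\alpha}(U)\cdots\tilde{\alpha}^{N-1}(U)=1$. Since $\tilde{\alpha}^N$ is inner on $M$, it acts on $\mathcal{M}_{\omega}(A)$ as the identity. Hence $U$ is a genuine $\mathbb{Z}_N$-cocycle for the action of $\tilde{\alpha}$ on $\mathcal{M}_{\omega}(A)$. This action is outer, because Connes showed that $\tilde{\alpha}^k$ is centrally nontrivial for $0<k<N$. Connes' vanishing result for finite cyclic group cocycles, as in \cite{C3}, then gives a unitary $W$ with $U=W\tilde{\alpha}(W^*)$. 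I expect that I have to justify exactly which version of Connes' theorem applies in this setting (injectivity of $M$, and outerness of $\tilde{\alpha}$ on $\mathcal{M}_{\omega}(A)$ for $k<N$). This is the main obstacle, though it should be citable.

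The second step is lifting. By the remark after the definition of $\varrho$ (the unitary-lifting argument via $e^{iT}$), there is a unitary $w\in F(A)$ with $\varrho(w)=W$. Put $u':=w^*u\alpha(w)$, which is a unitary in $F(A)$. Using $\varrho\circ\alpha=\tilde{\alpha}\circ\varrho$,
\[
\varrho(u')=W^*U\tilde{\alpha}(W)=1 .
\]
Weak stability of $\alpha$ now yields a unitary $v'\in F(A)$ and $\lambda\in\mathbb{T}$ with $\lambda u'=v'\alpha(v'^*)$. Then
\[
\lambda u=w\,v'\alpha(v'^*)\,\alpha(w^*)=v\alpha(v^*)
\]
with $v:=wv'$, which is the required conclusion in both cases.
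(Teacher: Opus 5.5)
Your proposal is correct and follows essentially the same route as the paper's proof. The paper likewise solves $\varrho(u)=W\tilde{\alpha}(W^*)$ in $\mathcal{M}_{\omega}(A)$ via Connes: \cite[Corollary 2.6]{C3} in case (1), with outerness of $\tilde{\alpha}^k$ on $\mathcal{M}_{\omega}(A)$ for $1\leq k\leq N(\tilde{\alpha})-1$ coming from \cite[Theorem 3.2]{C3}, and \cite[Theorem 2.1.3]{C2} in case (2); it then lifts $W$ to a unitary $w\in F(A)$, applies weak stability to $w^*u\alpha(w)$, and sets $v=wz$. The one point to tighten is that case (1) needs outerness of $\tilde{\alpha}^k$ on $\mathcal{M}_{\omega}(A)$, which is stronger than the central nontriviality you state, and this is exactly what \cite[Theorem 3.2]{C3} supplies.
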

\begin{proof}
(1) Since $A$ is nuclear, $\pi_{\tau_A}(A)^{''}$ is the injective II$_1$ factor. 
Hence $\mathcal{M}_{\omega}(A)$ is a II$_1$ factor. (See, for example, \cite[Theorem XIV.4.18]{Tak3}.) 
Moreover, \cite[Theorem 3.2]{C3} implies that for any $1\leq k \leq N(\tilde{\alpha})-1$, 
$\tilde{\alpha}^{k}$ is an outer automorphism of $\mathcal{M}_{\omega}(A)$. 
Hence $\tilde{\alpha}$ induces an outer action of $\mathbb{Z}_{N(\tilde{\alpha})}$ 
on $\mathcal{M}_{\omega}(A)$. Therefore there exists a unitary element $W$ in 
$\mathcal{M}_{\omega}(A)$ such that $\varrho (u)=W\tilde{\alpha}(W^*)$ by \cite[Corollary 2.6]{C3}. 
The surjectivity of $\varrho$ implies that there exists a unitary element $w$ in $F(A)$ such that 
$\varrho (w)=W$. Since we have $\varrho (w^*u \alpha (w))=1$ and $\alpha$ is weakly stable, 
there exist a unitary element $z$ in $F(A)$ and a complex number $\lambda$ of modulus one 
such that $\lambda w^*u\alpha(w)=z\alpha (z^*)$. Putting $v:= wz$, we obtain the conclusion. 
\ \\
(2) Using \cite[Theorem 2.1.3]{C2} instead of \cite[Corollary 2.6]{C3}, 
we obtain the conclusion by the same argument as in the proof of (1). 
\end{proof}

For a separable C$^*$-algebra $A$, let $\mathrm{GL}(A^{\sim})$ denote the set of invertible elements in 
$A^{\sim}$. Assume that $A\subset \overline{\mathrm{GL}(A^{\sim})}$. 
Then for any unitary element $u$ in $F(A)$, there exists a sequence of unitary elements 
$\{u_n\}_{n\in\mathbb{N}}$ in $A^{\sim}$ such that $u=[(u_n)_n]$ by \cite[Proposition 4.9]{Na2}. 
Therefore the following proposition is an immediate consequence of 
Proposition \ref{pro:cohomology-vanishing}. 
It is also a uniqueness result used in the proof of Theorem \ref{thm:absorption-trivial-char}. 

\begin{pro}\label{pro:uniqueness}
Let $A$ be a simple separable nuclear monotracial C$^*$-algebra satisfying 
$A\subset \overline{\mathrm{GL}(A^{\sim})}$, and let $\alpha$ be a weakly stable automorphism of $A$. 
\ \\
(1) Assume that $N(\tilde{\alpha})<\infty$. For any finite subset $F\subset A$ and $\varepsilon>0$, 
there exist a finite subset $F^{\prime}\subset A$ and $\delta>0$ such that the following holds. 
If $u$ is a unitary element in $A^{\sim}$ such that 
$$
\| [u, x] \| <\delta \quad \text{and} \quad \| 
\pi_{\tau_A}(u\alpha(u)\cdots \alpha^{N(\tilde{\alpha})-1}(u))- 
1_{\pi_{\tau_A}(A)^{''}}\|_2 
< \delta 
$$
for any $x\in F^{\prime}$, then there exist a unitary element $v$ in $A^{\sim}$ and 
a complex number $\lambda$ of modulus one such that 
$$
\| [v, x] \|< \varepsilon \quad \text{and} \quad \| x(\lambda u - v\alpha (v^*)) \|< \varepsilon
$$
for any $x\in F$. \ \\
(2)  Assume that $N(\tilde{\alpha})=\infty$. For any finite subset $F\subset A$ and $\varepsilon>0$, 
there exist a finite subset $F^{\prime}\subset A$ and $\delta>0$ such that the following holds. 
If $u$ is a unitary element in $A^{\sim}$ such that 
$$
\| [u, x] \| <\delta 
$$
for any $x\in F^{\prime}$, then there exist a unitary element $v$ in $A^{\sim}$ and a complex 
number $\lambda $ of modulus one such that 
$$
\| [v, x] \|< \varepsilon \quad \text{and} \quad \| x(\lambda u - v\alpha (v^*)) \|< \varepsilon
$$
for any $x\in F$.
\end{pro}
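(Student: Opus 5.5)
We argue by contradiction, using the standard translation between statements about $F(A)$ and approximate statements in $A^{\sim}$. We treat (1); (2) is identical with Proposition~\ref{pro:cohomology-vanishing}(2) in place of (1) and the $2$-norm condition dropped.

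Suppose (1) fails for some finite $F\subset A$ and $\varepsilon>0$. Fix an increasing sequence of finite subsets $F'_n$ with dense union in $A$, and put $\delta_n=1/n$. For each $n$ we obtain a unitary $u_n\in A^{\sim}$ such that:
\begin{itemize}
\item $\|[u_n,x]\|<1/n$ for all $x\in F'_n$;
\item $\|\pi_{\tau_A}(u_n\alpha(u_n)\cdots\alpha^{N-1}(u_n))-1\|_2<1/n$, where $N=N(\tilde{\alpha})$;
\item no pair $(v,\lambda)$ satisfies the conclusion for $(F,\varepsilon)$.
\end{itemize}
Then $(u_n)_n$ is a unitary in $(A^{\sim})^{\omega}\cap A'$, and its class $u:=[(u_n)_n]$ is a unitary in $F(A)$, using the identification of $F(A)$ with $((A^{\sim})^{\omega}\cap A')/\mathrm{Ann}(A,(A^{\sim})^{\omega})$.

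Next we check $\varrho(u\alpha(u)\cdots\alpha^{N-1}(u))=1$. The class of the product is represented by $(u_n\alpha(u_n)\cdots\alpha^{N-1}(u_n))_n$, and $\varrho$ (extended to $M(A)^{\omega}$, hence to $(A^{\sim})^{\omega}$) sends it to $(\pi_{\tau_A}(u_n\cdots\alpha^{N-1}(u_n)))_n$. This equals $1$ in $\mathcal{M}^{\omega}(A)$ by the $2$-norm condition. One should check that this is compatible with passing to the quotient $F(A)$; it is, since the paper's $\varrho$ on $F(A)$ is induced exactly this way. Proposition~\ref{pro:cohomology-vanishing}(1) then gives a unitary $v\in F(A)$ and $\lambda\in\mathbb{T}$ with $\lambda u=v\alpha(v^*)$.

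Now lift $v$. Since $A\subset\overline{\mathrm{GL}(A^{\sim})}$, \cite[Proposition 4.9]{Na2} gives unitaries $v_n\in A^{\sim}$ with $v=[(v_n)_n]$. Then $(v_n)_n$ commutes with $A$ in $(A^{\sim})^{\omega}$, so $\lim_{n\to\omega}\|[v_n,x]\|=0$ for $x\in F$. The equality $\lambda u=v\alpha(v^*)$ in $F(A)$ means that $(\lambda u_n-v_n\alpha(v_n^*))_n$ lies in $\mathrm{Ann}(A,(A^{\sim})^{\omega})$. Hence $\lim_{n\to\omega}\|x(\lambda u_n-v_n\alpha(v_n^*))\|=0$ for each $x\in F$.

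Choosing $n$ in the relevant $\omega$-large set, the pair $(v_n,\lambda)$ satisfies the conclusion for $u_n$, which contradicts the choice of $u_n$.

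The only real care needed is the bookkeeping of the annihilator quotient. Equalities in $F(A)$ give only estimates of the form $\|x(\cdot)\|$, which is why the conclusion is stated in that form. The unitary lift of $v$ must come from \cite[Proposition 4.9]{Na2}; this is exactly where the hypothesis $A\subset\overline{\mathrm{GL}(A^{\sim})}$ enters.
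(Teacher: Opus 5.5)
Your proof is correct and follows the paper's approach. The paper simply calls the proposition an immediate consequence of Proposition~\ref{pro:cohomology-vanishing} together with the unitary lifting of \cite[Proposition 4.9]{Na2}. Your contradiction argument (counterexamples $u_n$, the unitary $[(u_n)_n]\in F(A)$, then a unitary lift of $v$) is the standard way to make that precise.
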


Next, we consider an existence result used in the proof of Theorem \ref{thm:absorption-trivial-char}. 

\begin{lem}\label{lem:exist}
Let $A$ be a simple separable nuclear monotracial C$^*$-algebra, and let $\gamma$ be an 
automorphism of $A$ with $N(\tilde{\gamma})<\infty$ and $\Lambda (\tilde{\gamma})=1$. 
If $\gamma$ is approximately inner, then there exists an element $v$ 
in $A^{\omega}$ such that 
$$
\gamma (x)= vxv^*, \quad v^*vx=vv^*x=x \quad \text{and} \quad 
\tilde{\gamma}(\varrho(v))=\varrho(v)
$$
for any $x\in A$. 
\end{lem}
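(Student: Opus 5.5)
Write $N:=N(\tilde{\gamma})$. Since $\Lambda(\tilde{\gamma})=1$, we have $\tilde{\gamma}^{N}=\mathrm{Ad}(U)$ with $U\in\mathcal{U}(M)$ and $\tilde{\gamma}(U)=U$, where $M=\pi_{\tau_A}(A)^{''}$. The plan is to first produce a unitary $V\in\mathcal{M}^{\omega}(A)$ with $\tilde{\gamma}=\mathrm{Ad}(V)$ on $\varrho(A)$ and $\tilde{\gamma}(V)=V$. We then lift $V$ to $A^{\omega}$ and correct the lift using approximate innerness of $\gamma$.

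\emph{Step 1 (the von Neumann picture).} The automorphism $\tilde{\gamma}$ of the injective II$_1$ factor $M$ is approximately inner, since $\gamma$ is and $\pi_{\tau_A}$ is norm-to-$2$-norm continuous. Hence there is a unitary $W=(W_n)_n\in\mathcal{M}^{\omega}(A)$ implementing $\tilde{\gamma}$ on $M$. Any other implementing unitary has the form $WZ$ with $Z\in\mathcal{U}(M'\cap\mathcal{M}^{\omega}(A))=\mathcal{U}(\mathcal{M}_{\omega}(A))$. Since $\tilde{\gamma}(W)$ also implements $\tilde{\gamma}$ on $M$, we get $\tilde{\gamma}(W)=WZ_0$ for some $Z_0\in\mathcal{U}(\mathcal{M}_{\omega}(A))$, and we must solve $Z_0\,\tilde{\gamma}(Z)=Z$. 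This is where $\Lambda=1$ enters. Compute
\[
\tilde{\gamma}^{N}(W)=W\,Z_0\tilde{\gamma}(Z_0)\cdots\tilde{\gamma}^{N-1}(Z_0),
\]
while also $\tilde{\gamma}^{N}(W)=UWU^*$. The element $U^*W^*UW$ lies in $\mathcal{M}_{\omega}(A)$. One may choose $W$ to commute with $U$ in the ultrapower: replace $W$ by a unitary implementing $\tilde{\gamma}$ on $M$ that also satisfies $WUW^*=\tilde{\gamma}(U)=U$, which is possible because $\tilde{\gamma}(U)=U$ (apply approximate innerness on $M$ in the ultrapower, in the relative commutant form of Connes). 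The cocycle $Z_0$ then satisfies $Z_0\tilde{\gamma}(Z_0)\cdots\tilde{\gamma}^{N-1}(Z_0)=1$. As in the proof of Proposition~\ref{pro:cohomology-vanishing}, $\tilde{\gamma}$ induces an outer $\mathbb{Z}_N$-action on the II$_1$ factor $\mathcal{M}_{\omega}(A)$ by \cite[Theorem 3.2]{C3}, so \cite[Corollary 2.6]{C3} gives $Z$ with $Z_0=Z\tilde{\gamma}(Z^*)$. Put $V:=WZ$; then $\tilde{\gamma}(V)=V$ and $\mathrm{Ad}(V)=\tilde{\gamma}$ on $\varrho(A)$.

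\emph{Step 2 (lifting).} Since $\gamma$ is approximately inner, pick unitaries $w_n\in A^{\sim}$ with $\gamma=\mathrm{Ad}(w_n)$ asymptotically, and set $w=(w_n)_n$, a unitary in $(A^{\sim})^{\omega}$ implementing $\gamma$ on $A$. Then $\varrho(w)$ and $V$ both implement $\tilde{\gamma}$ on $\varrho(A)$, so $Y:=\varrho(w)^*V\in\mathcal{U}(\mathcal{M}_{\omega}(A))$. By the surjectivity of $\varrho\colon F(A)\to\mathcal{M}_{\omega}(A)$ and the unitary lifting remark, there is a unitary $y\in F(A)$ with $\varrho(y)=Y$. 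Represent $y$ by an element $y'\in(A^{\sim})^{\omega}\cap A'$ that is unitary modulo $\mathrm{Ann}$, in the sense that $y'^*y'x=y'y'^*x=x$ for $x\in A$. Set $v:=wy'$. Then $vxv^*=w\,y'xy'^*\,w^*=wxw^*=\gamma(x)$, and $v^*vx=vv^*x=x$. Moreover $\varrho(v)=\varrho(w)Y=V$, which is $\tilde{\gamma}$-fixed.

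\emph{Step 3 (getting $v\in A^{\omega}$).} Multiply by an approximate unit: replace $v$ by $v e$, where $e=(h_{k(n)})_n\in A^{\omega}\cap A'$ is a positive contraction chosen by a diagonal argument so that $ex=x$ for all $x\in A$ and $\varrho(e)=1$. Kirchberg's $\varepsilon$-test should allow the identities to be preserved exactly.

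\emph{Main obstacle.} The main obstacle is Step 1, specifically arranging $W$ to commute with $U$ so that the product cocycle is exactly trivial. This is the point where $\Lambda(\tilde{\gamma})=1$ is used, and it requires a relative version of approximate innerness in the injective factor, namely for $\tilde{\gamma}$ restricted to $\{U\}'$.
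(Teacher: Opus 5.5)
Your proof is correct and is essentially the paper's argument. The paper sets $u=(u_nh_n)_n\in A^{\omega}$, observes that $\varrho(u\gamma(u^*))$ is a cocycle for the outer $\mathbb{Z}_{N(\tilde{\gamma})}$-action induced by $\tilde{\gamma}$ on $\mathcal{M}_{\omega}(A)$, trivializes it by Connes' vanishing result, and corrects $u$ by a lift of the resulting unitary. That is your Steps 1--2 with $W:=\varrho(w)$, the correction made on the left instead of the right, and the cut-down by an approximate unit done first. One small point: the paper's definition of approximate innerness gives $w_n\in\mathcal{U}(M(A))$, not $A^{\sim}$, but $\varrho$ extends to $M(A)^{\omega}$ and your Step 3 lands in $A^{\omega}$ anyway, so nothing changes.

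The step you single out as the main obstacle is automatic. Every $W$ implementing $\tilde{\gamma}$ on $M$ satisfies $WUW^*=\tilde{\gamma}(U)=U$ simply because $U\in M$, and this is exactly how the paper uses $\Lambda(\tilde{\gamma})=1$. So no relative form of approximate innerness is needed. Likewise no $\varepsilon$-test is needed in Step 3, since any approximate unit $e=(h_n)_n$ gives $ex=x$ and $\varrho(e)=1$ exactly.
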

\begin{proof}
Since $\gamma$ is approximately inner, 
there exists a sequence $\{u_n\}_{n\in\mathbb{N}}$ of unitary elements in 
$M(A)$ such that $\gamma (x)=\lim_{n\to \infty}u_nxu_n^*$ for any $x\in A$. 
Put $u:=(u_nh_n)_n\in A^{\omega}$, where $\{h_n\}_{n\in\mathbb{N}}$ is an approximate unit for 
$A$. Then we have 
$$
\gamma(x)= uxu^* \quad \text{and} \quad u^*ux=uu^*x=x
$$
for any $x\in A$. 
By  the definition of $N(\tilde{\gamma})$ and the assumption $\Lambda (\tilde{\gamma})=1$, 
there exists a unitary element $W$ in $\pi_{\tau_A}(A)^{''}$ such that 
$\tilde{\gamma}^{N(\tilde{\gamma})}=\mathrm{Ad}(W)$ and $\tilde{\gamma}(W)=W$. 
Since $\varrho (A)$ is strongly dense in $\pi_{\tau_A}(A)^{''}\subseteq \mathcal{M}^{\omega}(A)$ and 
$\gamma (x)=uxu^*$ for any $x\in A$, 
$$
\tilde{\gamma}(T)= \varrho(u)T\varrho (u^*)
$$
for any $T\in \pi_{\tau_A}(A)^{''}$. Hence we have $\varrho(u)W\varrho (u^*)=W$. 
Put $w:= u\gamma(u^*)\in A^{\omega}$. Then we have $w^*wx=ww^*x=x$ and 
$$
wx=u\gamma(u^*)x=u\gamma (u^*\gamma^{-1}(x))=u\gamma(\gamma^{-2}(x)u^*)
=u\gamma^{-1}(x)\gamma(u^*)=xu\gamma (u^*)=xw
$$
for any $x\in A$. Hence $[w]$ is a unitary element in $F(A)$. In particular, $\varrho([w])=\varrho (w)$ 
is a unitary element in $\mathcal{M}_{\omega}(A)$. 
Since we have 
\begin{align*}
\varrho(w)\tilde{\gamma}(\varrho(w))\cdots \tilde{\gamma}^{N(\tilde{\gamma})-1}(\varrho(w))
=\varrho (u)\tilde{\gamma}^{N(\tilde{\gamma})}(\varrho(u)^*) 
=\varrho (u) W\varrho(u^*)W^*=1,
\end{align*} 
\cite[Corollary 2.6]{C3} (see also the proof of Proposition \ref{pro:cohomology-vanishing}) implies that 
there exists a unitary element $V$ in $\mathcal{M}_{\omega}(A)$ such that 
$\varrho (w)= V\tilde{\gamma}(V^*)$. 
Hence we have $V^*\varrho( u)= \tilde{\gamma}(V^*\varrho(u))$. 
By the surjectivity of $\varrho$, there exists an element 
$z$ in $A^{\omega}\cap A^{\prime}$ such that $\varrho (z)=V$ and $[z]$ is a unitary element in 
$F(A)$. Put $v:= z^*u$. 
Then $v$ is the desired element. 
\end{proof}

The following proposition is analogous to \cite[Theorem 4.6]{Na8}. 

\begin{pro}\label{pro:approximately-rep}
Let $A$ be a simple separable nuclear monotracial C$^*$-algebra, and let $\gamma$ be an 
automorphism of $A$ with $N(\tilde{\gamma})<\infty$ and $\Lambda (\tilde{\gamma})=1$.
If $\gamma$ is approximately inner, 
then there exist a unitary element $U$ in $\pi_{\tau_A}(A)^{''}$ and 
an element $u$ in $A^{\omega}$  such that 
$$
\gamma(x)=uxu^*, \quad 
u^*ux=uu^*x=x, 
\quad 
\tilde{\gamma}(\varrho(u))=\varrho(u) \quad 
\text{and} \quad \varrho(u^{N(\tilde{\gamma})})=U
$$ 
for any $x\in A$.  
\end{pro}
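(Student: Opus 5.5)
The plan is to start from the element provided by Lemma~\ref{lem:exist}. I will then multiply it by a suitable central sequence so that its $N$-th power becomes exactly the implementing unitary of $\tilde{\gamma}^{N}$, where $N:=N(\tilde{\gamma})$.

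\emph{Step 1.} By Lemma~\ref{lem:exist}, choose $v\in A^{\omega}$ with $\gamma(x)=vxv^*$, $v^*vx=vv^*x=x$ for $x\in A$, and $\tilde{\gamma}(\varrho(v))=\varrho(v)$. As in the proof of that lemma, pick a unitary $W\in\pi_{\tau_A}(A)^{''}$ with $\tilde{\gamma}^{N}=\mathrm{Ad}(W)$ and $\tilde{\gamma}(W)=W$; this uses $\Lambda(\tilde{\gamma})=1$. Put $U:=W$.

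By strong density of $\varrho(A)$, $\tilde{\gamma}(T)=\varrho(v)T\varrho(v)^*$ for all $T\in\pi_{\tau_A}(A)^{''}$. Here $\varrho(v)$ is a unitary in $\mathcal{M}^{\omega}(A)$, since $v^*v$ and $vv^*$ act as units on $A$ and hence $\varrho(v^*v)=\varrho(vv^*)=1$. Consequently $\varrho(v)^{N}$ implements $\tilde{\gamma}^{N}=\mathrm{Ad}(W)$ on $\pi_{\tau_A}(A)^{''}$. Therefore
$$
C:=W^*\varrho(v)^{N}
$$
is a unitary in $\mathcal{M}_{\omega}(A)$. Moreover $\tilde{\gamma}(C)=C$, because $\tilde{\gamma}$ fixes both $W$ and $\varrho(v)$. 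Also $C$ commutes with $\varrho(v)$: indeed $\varrho(v)W\varrho(v)^*=\tilde{\gamma}(W)=W$, so $\varrho(v)$ commutes with $W$. Finally, $C$ commutes with $W$ since $C\in\mathcal{M}_{\omega}(A)$. Hence $\varrho(v)^{N}=WC=CW$.

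\emph{Step 2.} Using Borel functional calculus, take $Z:=f(C)$, where $f$ is a Borel branch of $\lambda\mapsto\bar{\lambda}^{1/N}$ on $\mathbb{T}$. Then $Z$ is a unitary in the abelian von Neumann algebra generated by $C$, and $Z^{N}=C^*$. Since $\tilde{\gamma}$ restricts to a normal automorphism of this algebra fixing $C$, we get $\tilde{\gamma}(Z)=Z$. Likewise $Z$ commutes with $\varrho(v)$ and $W$, and $Z\in\mathcal{M}_{\omega}(A)$.

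By the lifting remark in the preliminaries (write $Z=e^{iT}$ and lift $T$ via the surjectivity of $\varrho$), there is $z\in A^{\omega}\cap A^{\prime}$ such that $[z]$ is a unitary in $F(A)$ and $\varrho(z)=Z$. I expect this lifting, together with the bookkeeping that $z^*z$ and $zz^*$ act as units on $A$, to be the main technical point, though it is routine given the earlier discussion.

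\emph{Step 3.} Put $u:=zv$. Taking adjoints in $v^*vx=x$ gives $xv^*v=x$, hence $vx=vxv^*v=\gamma(x)v$. Using this and the centrality of $z$, one checks the required identities for $x\in A$:
$$
uxu^*=z\gamma(x)z^*=\gamma(x)zz^*=\gamma(x),
$$
and
$$
u^*ux=v^*z^*zvx=v^*z^*z\gamma(x)v=v^*\gamma(x)v=x.
$$
The identity $uu^*x=x$ follows similarly.

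On the von Neumann level we have $\varrho(u)=Z\varrho(v)$, which is fixed by $\tilde{\gamma}$. Since $Z$ commutes with $\varrho(v)$ and $W$,
$$
\varrho(u^{N})=Z^{N}\varrho(v)^{N}=C^*CW=W=U,
$$
which completes the construction.
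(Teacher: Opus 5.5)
Your proposal is correct and takes essentially the same route as the paper. Both start from the element $v$ of Lemma~\ref{lem:exist}, take an $N(\tilde{\gamma})$-th root of the unitary $\varrho(v)^{*N(\tilde{\gamma})}U$ inside $\mathcal{M}_{\omega}(A)^{\tilde{\gamma}}\cap\{\varrho(v)\}^{\prime}$, lift that root to a unitary in $F(A)$ via the surjectivity of $\varrho$, and multiply it onto $v$.

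The differences are cosmetic: you write the product as $zv$ rather than the paper's $vw$, and you construct the root explicitly by Borel functional calculus in the von Neumann algebra generated by $C$, which the paper leaves implicit.
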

\begin{proof}
By the definition of $N(\tilde{\gamma})$ and the assumption $\Lambda (\tilde{\gamma})=1$, 
there exists a unitary element $U$ in $\pi_{\tau_A}(A)^{''}$ such that 
$\tilde{\gamma}^{N(\tilde{\gamma})}=\mathrm{Ad}(U)$ and $\tilde{\gamma}(U)=U$. 
Lemma \ref{lem:exist} implies that there exists an element $v$ in $A^{\omega}$ 
such that $\gamma (x)= vxv^*$, $v^*vx=vv^*x=x$ for any $x\in A$ and 
$\tilde{\gamma}(\varrho(v))=\varrho(v)$. 
It is easy to see that $\varrho(v^{N(\tilde{\gamma})})^*U$ is a unitary element in 
$\mathcal{M}_{\omega}(A)^{\tilde{\gamma}}\cap \{\varrho (v)\}^{\prime}$. 
Take a unitary element $W$ in $\mathcal{M}_{\omega}(A)^{\tilde{\gamma}}\cap \{\varrho (v)\}^{\prime}$ 
such that $W^{N(\tilde{\gamma})}=\varrho(v^{N(\tilde{\gamma})})^*U$. 
Since $\varrho$ is surjective, there exists an element $w$ in $A^{\omega}\cap A^{\prime}$ 
such that $[w]$ is a unitary element in $F(A)$ and $\varrho (w)=W$. 
Put $u:=vw$. Then we have $\gamma (x)=ux u^*$ and $u^*ux=uu^*x=x$ for any $x\in A$. 
Since we have 
$$
\tilde{\gamma}(\varrho(u))=\tilde{\gamma}(\varrho(v)W)=\tilde{\gamma}(\varrho(v))\tilde{\gamma}(W)=
\varrho(v)W=\varrho (u)
$$
and
$$
\varrho(u^{N(\tilde{\gamma})})=\varrho(u)^{N(\tilde{\gamma})}
=\varrho (v)W\varrho(v)W\cdots \varrho(v)W
=\varrho(v^{N(\tilde{\gamma})})W^{N(\tilde{\gamma})}=U,
$$
$u$ is the desired element. 
\end{proof}

The following proposition is analogous to \cite[Theorem 5.4]{Na8}. 

\begin{pro}\label{pro:existence}
Let $A$ and $B$ be simple separable nuclear monotracial C$^*$-algebras satisfying 
$A\otimes B\subset \overline{\mathrm{GL}((A\otimes B)^{\sim})}$, and let 
$\alpha$ and $\gamma$ be automorphisms of $A$ and $B$, respectively. 
Assume that $\gamma$ is an approximately inner automorphism such that 
$N(\tilde{\gamma})<\infty$ and $\Lambda (\tilde{\gamma})=1$. 
Then there exist a unitary element $V$ in $\pi_{\tau_{A\otimes B}}(A\otimes B)^{''}$ and 
unitary elements $u_1$ and $u_2$ in $((A\otimes B)^{\sim})^{\omega}$  such that 
$$ (\alpha\otimes \gamma) (x)= \mathrm{Ad}(u_1) \circ (\alpha \otimes \mathrm{id}_{B})(x), \quad (\alpha\otimes \mathrm{id}_B) (x)= \mathrm{Ad}(u_2) \circ (\alpha \otimes \gamma)(x), $$ for any $x\in A\otimes B$, $$ \varrho(u_1(\alpha \otimes \mathrm{id}_B)(u_1)\cdots (\alpha \otimes \mathrm{id}_B)^{N(\tilde{\gamma})-1}(u_1))=V $$ and $$ \varrho(u_2(\alpha \otimes \gamma)(u_2)\cdots (\alpha \otimes \gamma)^{N(\tilde{\gamma})-1} (u_2))=V^*. 
$$
\end{pro}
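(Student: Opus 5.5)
We apply Proposition \ref{pro:approximately-rep} to $\gamma$ on $B$. This gives a unitary $U\in\pi_{\tau_B}(B)^{''}$ with $\tilde{\gamma}^{N}=\mathrm{Ad}(U)$ and $\tilde{\gamma}(U)=U$, where $N:=N(\tilde{\gamma})$. It also gives an element $u\in B^{\omega}$ with
$$
\gamma(x)=uxu^*, \quad u^*ux=uu^*x=x, \quad \tilde{\gamma}(\varrho(u))=\varrho(u), \quad \varrho(u^{N})=U
$$
for any $x\in B$. The natural candidates are then $u_1:=1\otimes u$ and $u_2:=1\otimes u^*$, together with $V:=1\otimes U$. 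Here $\pi_{\tau_{A\otimes B}}(A\otimes B)^{''}\cong \pi_{\tau_A}(A)^{''}\bar{\otimes}\pi_{\tau_B}(B)^{''}$, since $\tau_{A\otimes B}=\tau_A\otimes\tau_B$.

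\textbf{Step 1 (the algebraic identities).} On elementary tensors, $\mathrm{Ad}(1\otimes u)\circ(\alpha\otimes\mathrm{id}_B)(a\otimes b)=\alpha(a)\otimes ubu^*=(\alpha\otimes\gamma)(a\otimes b)$. Similarly, $\mathrm{Ad}(1\otimes u^*)\circ(\alpha\otimes\gamma)(a\otimes b)=\alpha(a)\otimes u^*\gamma(b)u=\alpha(a)\otimes b$, using $u^*ux=x$. Both extend to all of $A\otimes B$ by linearity and continuity.

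\textbf{Step 2 (the $\varrho$-products).} Since $\alpha\otimes\mathrm{id}_B$ fixes $1\otimes u$, we get
$$
u_1(\alpha\otimes\mathrm{id}_B)(u_1)\cdots(\alpha\otimes\mathrm{id}_B)^{N-1}(u_1)=1\otimes u^{N},
$$
whose image under $\varrho$ is $1\otimes U=V$. For $u_2$, the $k$-th factor is $1\otimes\gamma^{k}(u^*)$, and its image under $\varrho$ is $1\otimes\tilde{\gamma}^k(\varrho(u))^*=1\otimes\varrho(u)^*$. Hence the product maps to $1\otimes\varrho(u^*)^{N}=V^*$. One must check that $\varrho$ on $(A\otimes B)^{\omega}$ restricted to $1\otimes B^{\omega}$ agrees with $1\otimes\varrho_B$. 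This follows from the product form of the trace and the compatibility of the GNS representations.

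\textbf{Step 3 (the main obstacle: genuine unitaries).} The element $u$ is not a unitary; it only satisfies $u^*ux=uu^*x=x$ for $x\in B$, and $1\otimes u$ does not even lie in $(A\otimes B)^{\omega}$ when $A$ is nonunital. The plan is as follows.

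First, choose a representative sequence $(u_n)_n$ of $u$. Multiply by $h_n\otimes 1$, where $(h_n)$ is an approximate unit of $A$, reindexing with a diagonal argument so that the relations survive on a dense sequence in $A\otimes B$. This produces an element $w\in(A\otimes B)^{\omega}$ satisfying the same relations as $u$, but with respect to all $x\in A\otimes B$.

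Next, use the hypothesis $A\otimes B\subset\overline{\mathrm{GL}((A\otimes B)^{\sim})}$, in the form of \cite[Proposition 4.9]{Na2} (as in the discussion before Proposition \ref{pro:uniqueness}). Since $w$ is "unitary modulo the annihilator", we can replace $w$ by a genuine unitary $u_1\in((A\otimes B)^{\sim})^{\omega}$. This replacement should leave $u_1x$ and $xu_1$ unchanged for $x\in A\otimes B$, so the conjugation identities of Step 1 still hold.

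The replacement changes $w$ only by an element of the annihilator of $A\otimes B$, which is killed by $\varrho$ once we note that $\varrho(a)$ for $a$ in an approximate unit tends to $1$ in $2$-norm. Hence the $\varrho$-products computed in Step 2 are unchanged. Doing the same with $w^*$ gives $u_2$.

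Care is needed to make sure the stable-rank-type approximation is performed coordinatewise on the products $u_nh_n$, and that the error terms vanish along $\omega$ in both the operator norm (against $x$) and the $2$-norm. I expect this bookkeeping to be the only delicate point.
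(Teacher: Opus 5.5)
Your proposal is correct and follows the paper's proof: apply Proposition~\ref{pro:approximately-rep}, form $v=(h_n\otimes u_n)_n$ with an approximate unit $(h_n)_n$ of $A$ (no reindexing is needed, since $h_na\to a$ in norm for $a\in A$), set $V:=1\otimes U$, and use the $\overline{\mathrm{GL}}$ hypothesis to replace $v$ by a genuine unitary with the same action on $A\otimes B$ and the same image under $\varrho$. The one point to tighten is Step~3: the result quoted before Proposition~\ref{pro:uniqueness} concerns unitaries in $F(A)$, and $v$ is not a central sequence, so argue directly as the paper does — choose invertible $y_n\in(A\otimes B)^{\sim}$ with $v=(y_n)_n$, put $u_1:=(y_n(y_n^*y_n)^{-1/2})_n$ and $u_2:=u_1^*$, and note that $\|y_n(y_n^*y_n)^{-1/2}x-y_nx\|\leq\|(1-(y_n^*y_n)^{1/2})x\|\to 0$ along $\omega$ because $v^*vx=x$ for $x\in A\otimes B$.
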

\begin{proof}
By Proposition \ref{pro:approximately-rep}, there exist a unitary element 
$U$ in $\pi_{\tau_B}(B)^{''}$ and 
an element $u=(u_n)_n$ in $B^{\omega}$  such that 
$\gamma(b)=ubu^*$, $u^*ub=uu^*b=b$, $\tilde{\gamma}(\varrho(u))=\varrho(u)$  
and $\varrho(u^{N(\tilde{\gamma})})=U$ for any $b\in B$.  
Put 
$$
V:=1_{\pi_{\tau_A}(A)^{''}}\otimes U \in \pi_{\tau_{A\otimes B}}(A\otimes B)^{''}\quad \text{and} \quad  
v:= (h_n\otimes u_n)_n\in (A\otimes B)^{\omega}, 
$$ 
where $\{h_n\}_{n\in\mathbb{N}}$ is an approximate unit for $A$. 
Then we have 
$$
(\alpha\otimes \gamma) (x)=v(\alpha\otimes \mathrm{id}_B)(x)v^*, \quad 
v^*vx=vv^*x=x
$$
for any $x\in A\otimes B$,
$$
\varrho ((\alpha\otimes \mathrm{id}_{B}) (v))=\varrho(v), 
\quad 
\varrho ((\alpha\otimes \gamma) (v))=\varrho(v) \quad \text{and} \quad 
\varrho(v^{N(\tilde{\gamma})})=V.
$$
By the assumption $A\otimes B\subset \overline{\mathrm{GL}((A\otimes B)^{\sim})}$, 
there exists a sequence $\{y_n\}_{n\in\mathbb{N}}$ of invertible elements 
in  $(A\otimes B)^{\sim}$ such that $v=(y_n)_n$. 
Put $u_1:=(y_n(y_n^*y_n)^{-\frac{1}{2}})_n$ and $u_2:=u_1^*$. 
Then we have $u_1x=vx$ and $u_2x=v^*x$ for any $x\in A\otimes B$. Moreover, 
we have $\varrho(u_1)=\varrho(v)$ and $\varrho (u_2)=\varrho(v^*)$. 
Therefore it can be easily checked that $u_1$ and $u_2$ are the desired elements.   
\end{proof}

The following corollary is an existence result used in the proof of Theorem 
\ref{thm:absorption-trivial-char}. 

\begin{cor}\label{cor:existence}
Let $A$ and $B$ be simple separable nuclear monotracial C$^*$-algebras satisfying 
$A\otimes B\subset \overline{\mathrm{GL}((A\otimes B)^{\sim})}$, and let 
$\alpha$ and $\gamma$ be automorphisms of $A$ and $B$, respectively. 
Assume that $\gamma$ is an approximately inner automorphism. \ \\
(1) If $N(\tilde{\gamma})<\infty$ and $\Lambda (\tilde{\gamma})=1$, then there exists 
a unitary element $U$ in $\pi_{\tau_{A\otimes B}}(A\otimes B)^{''}$ such that 
the following hold: \ \\
(i) For any finite subset $F\subset A\otimes B$ and $\varepsilon>0$, there exists a unitary element $u$ 
in $(A\otimes B)^{\sim}$ such that 
$$
\| (\alpha\otimes \gamma) (x)- \mathrm{Ad}(u) \circ (\alpha \otimes \mathrm{id}_{B})(x)\| <\varepsilon 
$$
for any $x\in F$ and 
$$ 
\| \pi_{\tau_{A\otimes B}}(u(\alpha\otimes \mathrm{id}_{B})(u)\cdots 
(\alpha\otimes \mathrm{id}_{B})^{N(\tilde{\gamma})-1}(u))-U\|_2 < \varepsilon.
$$
(ii) For any finite subset $F\subset A\otimes B$ and $\varepsilon>0$, there exists a unitary element $v$ 
in $(A\otimes B)^{\sim}$ such that 
$$
\| (\alpha\otimes \mathrm{id}_B) (x)- \mathrm{Ad}(v) \circ (\alpha \otimes \gamma)(x)\| <\varepsilon 
$$
for any $x\in F$ and 
$$ 
\| \pi_{\tau_{A\otimes B}}(v(\alpha\otimes \gamma)(v)\cdots 
(\alpha\otimes \gamma)^{N(\tilde{\gamma})-1}(v))-U^*\|_2 < \varepsilon.
$$
(2) If $N(\tilde{\gamma})=\infty$, then for any finite subset 
$F\subset A\otimes B$ and $\varepsilon>0$, there exists a unitary element $u$ 
in $(A\otimes B)^{\sim}$ such that 
$$
\| (\alpha\otimes \gamma) (x)- \mathrm{Ad}(u) \circ (\alpha \otimes \mathrm{id}_{B})(x)\|
<\varepsilon 
$$
for any $x\in F$.
\end{cor}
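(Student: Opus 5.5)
Part (1) follows from Proposition \ref{pro:existence} by passing from the ultrapower to a single index. Proposition \ref{pro:existence} gives a unitary $V\in\pi_{\tau_{A\otimes B}}(A\otimes B)^{''}$ and unitaries $u_1=(u_{1,n})_n$ and $u_2=(u_{2,n})_n$ in $((A\otimes B)^{\sim})^{\omega}$. We take $U:=V$ and, since $u_1,u_2$ are unitaries in an ultrapower of a unital algebra, choose the representatives $u_{1,n}$ and $u_{2,n}$ to be unitaries in $(A\otimes B)^{\sim}$.

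For (i), fix a finite subset $F$ and $\varepsilon>0$. The identity $(\alpha\otimes\gamma)(x)=u_1(\alpha\otimes\mathrm{id}_B)(x)u_1^*$ in $(A\otimes B)^{\omega}$ means
\[
\lim_{n\to\omega}\|(\alpha\otimes\gamma)(x)-\mathrm{Ad}(u_{1,n})\circ(\alpha\otimes\mathrm{id}_B)(x)\|=0
\]
for each $x\in F$. The $\alpha\otimes\mathrm{id}_B$-twisted product of $u_1$ of length $N(\tilde{\gamma})$ is represented by the corresponding products of the $u_{1,n}$, because the automorphism acts coordinatewise. By the definition of $\varrho$ and of the ultrapower von Neumann algebra, the equation $\varrho(\cdots)=V$ means that the $\|\cdot\|_2$-distance between $\pi_{\tau_{A\otimes B}}$ of that product and $V$ tends to $0$ along $\omega$. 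Here we use that $\pi_{\tau_{A\otimes B}}$ extends to the unitization and that $\varrho$ on $((A\otimes B)^{\sim})^{\omega}$ is computed coordinatewise. Since finitely many conditions each hold on an $\omega$-large set, some index $n$ satisfies all of them, and $u:=u_{1,n}$ works. Part (ii) is the same argument applied to $u_2$, using the last identity of Proposition \ref{pro:existence} with target $V^*=U^*$.

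Part (2) needs a different argument, since Proposition \ref{pro:existence} requires $N(\tilde{\gamma})<\infty$; but no control of twisted products is asked for. Because $\gamma$ is approximately inner, there are unitaries $w_n\in M(B)$ with $\gamma=\lim_n\mathrm{Ad}(w_n)$ pointwise. Put $v:=(h_n\otimes w_nk_n)_n\in(A\otimes B)^{\omega}$, where $\{h_n\}_{n\in\mathbb{N}}$ and $\{k_n\}_{n\in\mathbb{N}}$ are approximate units for $A$ and $B$, chosen as in the proofs of Lemma \ref{lem:exist} and Proposition \ref{pro:existence}. Then
\[
(\alpha\otimes\gamma)(x)=v(\alpha\otimes\mathrm{id}_B)(x)v^*\quad\text{and}\quad v^*vx=vv^*x=x
\]
for all $x\in A\otimes B$. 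It suffices to check this on elementary tensors $a\otimes b$, and it follows since $(\alpha\otimes\gamma)(a\otimes b)=\alpha(a)\otimes\gamma(b)$ and $h_n$ acts as an approximate unit on $\alpha(a)$.

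Using $A\otimes B\subset\overline{\mathrm{GL}((A\otimes B)^{\sim})}$, we write $v=(y_n)_n$ with each $y_n$ invertible and form the polar parts $u:=(y_n(y_n^*y_n)^{-1/2})_n$, exactly as in Proposition \ref{pro:existence}. This gives a unitary with $ux=vx$ for all $x\in A\otimes B$, hence $(\alpha\otimes\gamma)(x)=u(\alpha\otimes\mathrm{id}_B)(x)u^*$. Selecting a single index as in Part (1) finishes the proof.

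The main point requiring care is this polar-decomposition step, namely checking that $ux=vx$ and $xu^*=xv^*$ for all $x$. This holds because $v^*vx=x$ forces $|y_n|x\to x$ along $\omega$, and hence $(y_n^*y_n)^{-1/2}x\to x$ along $\omega$. The latter uses continuous functional calculus away from $0$, after first perturbing so that the spectra stay bounded away from $0$ on the relevant range; this is the step I would verify most carefully.
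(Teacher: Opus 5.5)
Your route is the paper's. Part (1) is read off from Proposition~\ref{pro:existence} with $U:=V$, by picking one index from an $\omega$-large set on which the finitely many conditions hold. Part (2) is the approximate-innerness argument combined with the approximate-unit, $\overline{\mathrm{GL}}$, and polar-decomposition device from the proof of Proposition~\ref{pro:existence}, without the $\varrho$-conditions. The device is needed because $1\otimes w_n$ lies in $M(A\otimes B)$, not in $(A\otimes B)^{\sim}$, and this is what the paper's one-line proof of (2) leaves implicit. The construction itself is fine.

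What is wrong is your justification of $ux=vx$. When $A\otimes B$ is non-unital (the case of interest), $v$ lies in the proper ideal $(A\otimes B)^{\omega}$ of $((A\otimes B)^{\sim})^{\omega}$, so it is not invertible. Hence $\|(y_n^*y_n)^{-1/2}\|=\|y_n^{-1}\|\to\infty$ along $\omega$, and no perturbation can keep the spectra of $|y_n|$ away from $0$. So $\||y_n|x-x\|\to 0$ gives no control over $(y_n^*y_n)^{-1/2}x-x=(y_n^*y_n)^{-1/2}(x-|y_n|x)$, which need not tend to $0$ for an arbitrary choice of invertible approximants $y_n$.

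The inverse is not needed. With $u_n:=y_n(y_n^*y_n)^{-1/2}$ we have $y_n=u_n|y_n|$, so $\|u_nx-y_nx\|=\|u_n(x-|y_n|x)\|\le\|x-|y_n|x\|$. This tends to $0$ along $\omega$ because $|v|=(|y_n|)_n$ and $v^*vx=x$ gives $(|v|-1)x=(|v|+1)^{-1}(|v|^2-1)x=0$. Thus $ux=vx$ for all $x\in A\otimes B$. Applying this to $x^*$ gives $xu^*=xv^*$, whence $uyu^*=(vy)u^*=v(yv^*)=vyv^*$ for $y\in A\otimes B$.

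Only $v^*vx=x$ enters, so you can drop the claim $vv^*x=x$. As written, your one-line check would not cover it anyway: it needs $w_nk_n^2w_n^*\gamma(b)\approx w_nk_n^2bw_n^*\approx\gamma(b)$, not just the approximate-unit property of $h_n$.
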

\begin{proof}
Part (1) is an immediate consequence of Proposition \ref{pro:existence}. 
Part (2) follows from the approximate innerness of $\gamma$. 
\end{proof}

The following theorem is the main result of this section.  

\begin{thm}\label{thm:absorption-trivial-char}
Let $A$ and $B$ be simple separable nuclear monotracial C$^*$-algebras satisfying 
$A\otimes B\subset \overline{\mathrm{GL}((A\otimes B)^{\sim})}$, and let 
$\alpha$ and $\gamma$ be automorphisms of $A$ and $B$, respectively. 
Assume that $\alpha\otimes \mathrm{id}_B$ and $\alpha\otimes \gamma$ are weakly stable. 
If $\gamma$ is an approximately inner automorphism satisfying $N(\tilde{\alpha})=N(\tilde{\gamma})$ 
and  $\Lambda (\tilde{\gamma})=1$, then $\alpha\otimes \gamma$ is outer conjugate to 
$\alpha\otimes \mathrm{id}_{B}$. 
\end{thm}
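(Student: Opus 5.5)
We apply Proposition \ref{pro:intertwining} with $\alpha\otimes\gamma$ and $\alpha\otimes\mathrm{id}_B$ on $C:=A\otimes B$. All the endomorphisms $\varphi_n$ and $\psi_n$ are taken to be of the form $\mathrm{Ad}(w)$ for unitaries $w\in C^{\sim}$. Corollary \ref{cor:existence} produces approximate intertwiners, and Proposition \ref{pro:uniqueness} corrects their cocycle defects. Put $N:=N(\tilde{\alpha})=N(\tilde{\gamma})$. First note that $N(\widetilde{\alpha\otimes\mathrm{id}_B})=N(\widetilde{\alpha\otimes\gamma})=N$. Indeed, $\tilde{\gamma}$ is approximately inner and $\tilde\gamma^N$ is inner, and by Connes' characterisation in the injective II$_1$ factor this gives the claim. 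The case $N=\infty$ uses Corollary \ref{cor:existence}(2) and Proposition \ref{pro:uniqueness}(2) and has no trace condition; it is a simplification of the finite case, so we describe the case $N<\infty$.

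Fix $U$ as in Corollary \ref{cor:existence}(1), and choose dense sequences together with $\varepsilon_n$ summable. We build $u_n,v_n$ and set $\varphi_n=\mathrm{Ad}(a_n)$, $\psi_n=\mathrm{Ad}(b_n)$ inductively. At stage $n$, Corollary \ref{cor:existence}(1)(i) gives a unitary $u'$ with $\alpha\otimes\gamma\approx \mathrm{Ad}(u')\circ(\alpha\otimes\mathrm{id}_B)$ on a large finite set, and with its $N$-fold cocycle $2$-norm close to $U$. Transporting through the current conjugating unitary $a_n$, we get $\varphi_n\circ(\alpha\otimes\mathrm{id}_B)\approx\mathrm{Ad}(u_n)\circ(\alpha\otimes\gamma)\circ\varphi_n$ (conditions (i) and (ii)), where $u_n=a_n u'(\alpha\otimes\gamma)(a_n^*)$ up to orientation. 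Similarly, Corollary \ref{cor:existence}(1)(ii) gives $v'$ with $N$-fold cocycle close to $U^*$ for the reverse direction.

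The key step is to arrange the compatibility conditions (v) and (vi), namely $\psi_n(u_n)v_n\approx 1$ and $\varphi_{n+1}(v_n)u_{n+1}\approx1$ locally. Form the product $z:=\psi_n(u_n)v_n$, a unitary that approximately commutes with the relevant finite set, since both factors implement approximately the same automorphism. Its $N$-fold $\alpha\otimes\mathrm{id}_B$-cocycle is, in $\|\cdot\|_2$, approximately $\pi(b_n)\,U\,\pi(b_n^*)$ times the transported $U^*$. Here $\tilde\gamma(U)=U$ and $U=1\otimes U_B$ is fixed, so after the appropriate conjugations these cancel and the cocycle is close to $1$. Proposition \ref{pro:uniqueness}(1), applied to the weakly stable automorphism $\alpha\otimes\mathrm{id}_B$, now yields a unitary $w$ that approximately commutes with the finite set and a scalar $\lambda$ with $\lambda z\approx w(\alpha\otimes\mathrm{id}_B)(w^*)$. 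We replace $v_n$ by $\bar\lambda\,\psi_n(\cdot)$-corrected $v_n$, i.e.\ $v_n\mapsto \bar\lambda z^*v_n w(\alpha\otimes\mathrm{id}_B)(w^*)$. The scalar is harmless, because $\mathrm{Ad}$ of a scalar multiple is unchanged and (v) only concerns $\psi_n(u_n)v_n$ up to this correction. We also replace $b_n$ by $w^*b_n$, so that $\psi_{n}$ still almost preserves the finite sets, which gives (iii). The symmetric step at stage $n+1$ uses weak stability of $\alpha\otimes\gamma$ to arrange (vi) and (iv). The finite-set bookkeeping (vii) and (viii) is arranged by enlarging $F_n$ and $G_n$ at each stage.

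The main obstacle is verifying the $2$-norm cocycle estimate needed to invoke Proposition \ref{pro:uniqueness}(1). One must check that the cocycle of the composite $z$ is close to $1$ rather than to some other unitary. This requires care with the twisting by $\alpha\otimes\gamma$ versus $\alpha\otimes\mathrm{id}_B$ in the cocycle products, and it uses both $\Lambda(\tilde\gamma)=1$ (i.e.\ $\tilde\gamma(U)=U$) and the fact that $U=1\otimes U_B$ commutes approximately with the relevant unitaries in $2$-norm. Once this is checked, Proposition \ref{pro:intertwining} gives the outer conjugacy.
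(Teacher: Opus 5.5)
Your overall architecture matches the paper's: Proposition~\ref{pro:intertwining} with inner $\varphi_n,\psi_n$, raw intertwiners from Corollary~\ref{cor:existence}, and coboundary corrections from Proposition~\ref{pro:uniqueness}. But the step you yourself call the main obstacle is left unproved, and the mechanism you suggest for it does not work. Write $\pi$ for $\pi_{\tau_{A\otimes B}}$.

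If $u=a\,u'\,\beta(a^*)$ is an intertwiner transported through a conjugating unitary $a$, its $N$-fold $\beta$-cocycle telescopes to $a\,[u'\beta(u')\cdots\beta^{N-1}(u')]\,\beta^{N}(a^*)$. Its image is therefore about $\pi(a)\,U\,\tilde{\beta}^{N}(\pi(a))^*$, not $\pi(a)U\pi(a)^*$. Since $\tilde\beta^N=\mathrm{Ad}(W)$ is a nontrivial inner automorphism, nothing cancels in $\|\cdot\|_2$ without information relating $\pi(a)$ to $U$ and $W$, and you have none. The conjugating unitaries are products of the $w$'s from Proposition~\ref{pro:uniqueness}, which only almost commute in norm with prescribed finite subsets of $A\otimes B$. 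The precision you would need is the $\delta$ of the current application of Proposition~\ref{pro:uniqueness}, and that $\delta$ is fixed only after those $w$'s exist. So the claim that $U=1\otimes U_B$ commutes approximately with the relevant unitaries in $2$-norm is unavailable. Nor does $\tilde\gamma(U)=U$ help: $\Lambda(\tilde\gamma)=1$ was already used inside Corollary~\ref{cor:existence} to get the same $U$ (resp.\ $U^*$) in both directions.

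The missing idea is to cancel the conjugating unitaries in norm, at the C$^*$-level, before passing to $\pi$. Put $\beta_1:=\alpha\otimes\gamma$ and $\beta_2:=\alpha\otimes\mathrm{id}_B$.
\begin{itemize}
\item When you choose the next raw intertwiner from Corollary~\ref{cor:existence}, require that the current conjugating unitary lies, modulo scalars, in the finite set on which it intertwines. For example, $\beta_1(w_1)u_2\approx u_2\beta_2(w_1)$; these are the paper's (3.9.12) and (3.9.18).
\item The compatibility product is then, locally and up to a small norm error and a scalar, a product $z$ of two \emph{untransported} intertwiners: $z_1=u_1v_1$, $z_2=v_1u_2$, $z_3=u_2v_2,\dots$. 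The correction turns it into $\lambda\,w^*z\,\beta(w)\approx 1$.
\item Also require that the partial cocycles of the earlier factor lie, modulo scalars, in the set where the later factor intertwines; these are (3.9.5), (3.9.11), (3.9.17). Then the $N$-fold cocycle of $z$ is norm-close to the product of the two individual cocycles. Hence it is $2$-norm close to $UU^*=1$ or $U^*U=1$, with no conjugation to control.
\item This fixes the order of choices: Proposition~\ref{pro:uniqueness} must be invoked, fixing $F'$ and $\delta$, before the first factor of $z$ is chosen. Both factors are then chosen with precision of order $\delta/N$.
\end{itemize}

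Your correction formula is also off. It should be $\psi_n\mapsto\mathrm{Ad}(w^*)\circ\psi_n$ together with $v_n\mapsto\lambda\,w^*v_n(\alpha\otimes\mathrm{id}_B)(w)$. This turns $\psi_n(u_n)v_n$ into $\lambda\,w^*z\,(\alpha\otimes\mathrm{id}_B)(w)\approx1$ locally. Your $\bar\lambda z^*v_nw(\alpha\otimes\mathrm{id}_B)(w^*)$ neither satisfies (ii) for the new $\psi_n$ nor makes the product close to $1$.

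Finally, $N(\widetilde{\alpha\otimes\gamma})=N(\widetilde{\alpha\otimes\mathrm{id}_B})=N$ holds because a tensor product of automorphisms of factors is inner iff each tensor factor is. It does not come from approximate innerness of $\tilde\gamma$.
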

\begin{proof}
Proposition \ref{pro:uniqueness}, Corollary \ref{cor:existence}, and Proposition \ref{pro:intertwining} 
imply the conclusion. 
Indeed, let $\{x_n\}_{n\in\mathbb{N}}$ 
be a dense sequence in $A\otimes B$, and set $\varepsilon_n:=2^{-n}$ for each $n\in\mathbb{N}$. 
First, we consider the case where $N(\tilde{\gamma})<\infty$. Put 
$\beta_1:= \alpha\otimes\gamma$ and $\beta_2:=\alpha\otimes \mathrm{id}_{B}$. 
Note that we have $N(\tilde{\beta}_1)=N(\tilde{\beta}_2)=N(\tilde{\gamma})$. 
Choose a unitary element $U$ in 
$\pi_{\tau_{A\otimes B}}(A\otimes B)^{''}$ as in Corollary \ref{cor:existence}(1). 

Step 1. 
Put $F_1:=\{x_1\}$. Applying Proposition \ref{pro:uniqueness}(1) to $\beta_1$, 
$F_1$, and 
$\varepsilon_1/2$, we obtain $F_1^{\prime}$ and $\delta_1>0$. 
We may assume that $\delta_1<\varepsilon_1/2$. 
Put 
$$
F_1^{\prime\prime}:= F_1\cup F_1^{\prime}\cup \beta_1^{-1}(F_1^{\prime}).
\eqno{(3.9.1)}
$$
Corollary \ref{cor:existence}(1)(i) implies that there exists a unitary element $u_1$ in 
$(A\otimes B)^{\sim}$ 
such that 
$$
\| \beta_1(x)-\mathrm{Ad}(u_1)\circ \beta_2(x)\| 
< \frac{\delta_1}{2N(\tilde{\gamma})} \eqno{(3.9.2)}
$$
for any $x\in F_1^{\prime\prime}$ and 
$$ 
\| \pi_{\tau_{A\otimes B}}(u_1\beta_2(u_1)\cdots 
\beta_2^{N(\tilde{\gamma})-1}(u_1) )-U\|_2 < 
\frac{\delta_1}{2N(\tilde{\gamma})}. \eqno{(3.9.3)}
$$
Put $\varphi_1:=\mathrm{id}_{A\otimes B}$ and $\tilde{u}_1:=u_1$. 
Then (3.9.1) and (3.9.2) imply that $\varphi_1$ is a nondegenerate endomorphism such that 
$$
\| \varphi_1\circ \beta_1 (x)- \mathrm{Ad}(u_1)\circ \beta_2\circ \varphi_1(x) \|
<\frac{\delta_1}{2N(\tilde{\gamma})} <\varepsilon_1
$$
for any $x\in F_1$. 

Step 2. Choose a finite subset $G_1$ of $A\otimes B$ such that 
$$
\varphi_1 (F_1)\cup\beta_1^{-1}(F_1^{\prime}) \subset G_1 \eqno{(3.9.4)}
$$
and
$$
u_1\beta_2(u_1)\cdots \beta_2^{k}(u_1)
\in \{x+\lambda 1\; |\; x\in G_1, \lambda\in\mathbb{C}\}
\eqno{(3.9.5)}
$$
for any $0\leq k\leq N(\tilde{\gamma})-1$. 
Applying Proposition \ref{pro:uniqueness}(1) to $\beta_2$, $G_1$, and 
$\varepsilon_1/2$, we obtain $G_1^{\prime}$ and $\delta_2>0$. We may assume that 
$\delta_2< \delta_1/2$. 
Put
$$
G_1^{\prime\prime}:=G_1\cup G_1^{\prime}\cup\beta_2^{-1}(G_1^{\prime}). \eqno{(3.9.6)}
$$
Corollary \ref{cor:existence}(1)(ii) implies that there exists a unitary element 
$v_1$ in $(A\otimes B)^{\sim}$ such that 
$$
\| \beta_2(x)-\mathrm{Ad}(v_1)\circ \beta_1(x)\| 
< \frac{\delta_2}{2N(\tilde{\gamma})} \eqno{(3.9.7)}
$$
for any $x\in G_1^{\prime\prime}$ and 
$$ 
\| \pi_{\tau_{A\otimes B}}(v_1\beta_1(v_1)\cdots 
\beta_1^{N(\tilde{\gamma})-1}(v_1) )-U^*\|_2 < 
\frac{\delta_2}{2N(\tilde{\gamma})}. \eqno{(3.9.8)}
$$
Put $z_1:= u_1v_1$. Then we have 
\begin{align*}
\| [z_1, x] \|&= \| u_1v_1\beta_1(\beta_1^{-1}(x)) v_1^*u_1^* -\beta_1(
\beta_1^{-1}(x)) \| \\
& =\| u_1(v_1\beta_1(\beta_1^{-1}(x))v_1^*-\beta_2(\beta_1^{-1}(x)))u_1^* \\
&\quad +u_1\beta_2(\beta_1^{-1}(x))u_1^*-\beta_1(\beta_1^{-1}(x))\| \\
(3.9.4), (3.9.6), (3.9.7) \;
& <\|u_1\beta_2(\beta_1^{-1}(x))u_1^*-\beta_1(\beta_1^{-1}(x))\| + \frac{\delta_2}{2N(\tilde{\gamma})} \\
(3.9.1), (3.9.2)\; &
< \frac{\delta_1}{2N(\tilde{\gamma})}+\frac{\delta_2}{2N(\tilde{\gamma})}<\delta_1
\end{align*}
for any $x\in F_1^{\prime}$. 
By (3.9.5), (3.9.6), and (3.9.7), we obtain
\begin{align*}
\| z_1\beta_1&(z_1)\cdots\beta_1^{N(\tilde{\gamma})-1}(z_1) \\
& -u_1\beta_2(u_1)\cdots \beta_2^{N(\tilde{\gamma})-1}(u_1)
v_1\beta_1(v_1)\cdots \beta_1^{N(\tilde{\gamma})-1}(v_1)\|
< \frac{(N(\tilde{\gamma})-1)\delta_2}{2N(\tilde{\gamma})}< \frac{\delta_2}{2}.
\end{align*}
This estimate, (3.9.3), and (3.9.8) imply that 
\begin{align*}
\|\pi_{\tau_{A\otimes B}}(z_1\beta_1&(z_1)\cdots\beta_1^{N(\tilde{\gamma})-1}(z_1))
-1_{\pi_{\tau_{A\otimes B}}(A\otimes B)^{''}}\|_2
<\frac{\delta_2}{2}+\frac{\delta_1}{2N(\tilde{\gamma})}+\frac{\delta_2}{2N(\tilde{\gamma})}
<\delta_1.
\end{align*} 
Therefore, Proposition \ref{pro:uniqueness}(1) implies that there exist a unitary element $w_1$ in 
$(A\otimes B)^{\sim}$ and a complex number $\lambda_1$ of modulus one such that 
$$
\| [w_1, x] \|< \frac{\varepsilon_1}{2} \quad \text{and} \quad 
\| x(\lambda_1 z_1 -w_1\beta_1 (w_1^*)) \|
< \frac{\varepsilon_1}{2} \eqno{(3.9.9)}
$$
for any $x\in F_1$. Put 
$$
\psi_1:= \mathrm{Ad}(w_1^*)\quad\text{and}\quad \tilde{v}_1:=\lambda_1w_1^*v_1\beta_1(w_1). 
$$
Then $\psi_1$ is a nondegenerate endomorphism such that  
$$
\| \psi_1\circ \beta_2 (x) - \mathrm{Ad}(\tilde{v}_1)\circ \beta_1 \circ \psi_1 (x) \| 
=\| \beta_2(x) - v_1\beta_1(x)v_1^*\| <\frac{\delta_2}{2N(\tilde{\gamma})}<\varepsilon_1
$$
for any $x\in G_1$ and
$$
\| \psi_1\circ \varphi_1(x) -x \|=\|w_1^*xw_1 -x\|= \| [x, w_1]\| < \frac{\varepsilon_1}{2}
<\varepsilon_1 
$$
for any $x\in F_1$. Furthermore, we have
\begin{align*}
\|\psi_1\circ \varphi_1 (x) (\psi_1(\tilde{u}_1)\tilde{v}_1-1) \|
&= \| \lambda_1w_1^*xz_1\beta_1(w_1)-w_1^*x w_1\| \\
&=\| x(\lambda_1 z_1- w_1\beta_1(w_1^*))\|<\frac{\varepsilon_1}{2}<\varepsilon_1 
\end{align*}
for any $x\in F_1$. 

Step 3. 
Choose a finite subset $F_2$ of $A\otimes B$ such that 
$$
\{x_2\}\cup G_1\cup \psi_1(G_1) \cup F_1\tilde{v}_1 
\cup\beta_2^{-1}(G_1^{\prime})
\subset F_2 \eqno{(3.9.10)}
$$
and
$$
v_1\beta_1(v_1)\cdots \beta_1^{k}(v_1)
\in \{x+\lambda 1\; |\; x\in F_2, \lambda\in\mathbb{C}\}
\eqno{(3.9.11)}
$$
for any $0\leq k\leq N(\tilde{\gamma})-1$. 
Applying Proposition \ref{pro:uniqueness}(1) to $\beta_1$, $F_2$, and 
$\varepsilon_2/2$, we obtain $F_2^{\prime}$ and $\delta_3>0$. We may assume that 
$\delta_3< \delta_2/2$ and $\delta_3< \varepsilon_2/(2+2\max_{b\in G_1}\| b\|)$. 
Choose a finite subset $F_2^{\prime\prime}$ of $A\otimes B$ such that 
$$
F_2\cup F_2^{\prime}\cup \beta_1^{-1}(F_2^{\prime})\subset F_2^{\prime\prime} \quad
\text{and}\quad
w_1\in \{x+\lambda 1\; |\; x\in F_2^{\prime\prime}, \lambda\in\mathbb{C}\}.  \eqno{(3.9.12)}
$$
Corollary \ref{cor:existence}(1)(i) implies that there exists a unitary element $u_2$ in 
$(A\otimes B)^{\sim}$ such that 
$$
\| \beta_1(x)-\mathrm{Ad}(u_2)\circ \beta_2(x)\| 
< \frac{\delta_3}{2N(\tilde{\gamma})} \eqno{(3.9.13)}
$$
for any $x\in F_2^{\prime\prime}$ and 
$$ 
\| \pi_{\tau_{A\otimes B}}(u_2\beta_2(u_2)\cdots 
\beta_2^{N(\tilde{\gamma})-1}(u_2) )-U\|_2 < 
\frac{\delta_3}{2N(\tilde{\gamma})}. \eqno{(3.9.14)}
$$
Put $z_2:=v_1u_2$.  
Using (3.9.6), (3.9.7), (3.9.10), (3.9.12), and (3.9.13) instead of (3.9.1), (3.9.2), (3.9.4), (3.9.6), and 
(3.9.7), we obtain 
\begin{align*}
\|[z_2, x] \| <  \frac{\delta_2}{2N(\tilde{\gamma})}+\frac{\delta_3}{2N(\tilde{\gamma})}<\delta_2
\end{align*}
for any $x\in G_1^{\prime}$ by the same estimate as in Step 2.  
By (3.9.11), (3.9.12), and (3.9.13), we obtain 
\begin{align*}
\| z_2\beta_2&(z_2)\cdots\beta_2^{N(\tilde{\gamma})-1}(z_2) \\
& -v_1\beta_1(v_1)\cdots \beta_1^{N(\tilde{\gamma})-1}(v_1)
u_2\beta_2(u_2)\cdots \beta_2^{N(\tilde{\gamma})-1}(u_2)\|
< \frac{(N(\tilde{\gamma})-1)\delta_3}{2N(\tilde{\gamma})}< \frac{\delta_3}{2}.
\end{align*}
This estimate, (3.9.8), and (3.9.14) imply that  
\begin{align*}
\|\pi_{\tau_{A\otimes B}}(z_2\beta_2&(z_2)\cdots\beta_2^{N(\tilde{\gamma})-1}(z_2))
-1_{\pi_{\tau_{A\otimes B}}(A\otimes B)^{''}}\|_2
<\frac{\delta_3}{2}+\frac{\delta_2}{2N(\tilde{\gamma})}+\frac{\delta_3}{2N(\tilde{\gamma})}
<\delta_2.
\end{align*}
Therefore, Proposition \ref{pro:uniqueness}(1) implies that there exist a unitary element $w_2$ in 
$(A\otimes B)^{\sim}$ and a complex number $\lambda_2$ of modulus one such that 
$$
\| [w_2, x] \|< \frac{\varepsilon_1}{2} \quad \text{and} \quad 
\| x(\lambda_2 z_2 -w_2\beta_2 (w_2^*)) \|
< \frac{\varepsilon_1}{2} \eqno{(3.9.15)}
$$
for any $x\in G_1$. Put 
$$
\varphi_2:= \mathrm{Ad}(w_2^*w_1)\quad\text{and}\quad 
\tilde{u}_2:=\overline{\lambda_1}\lambda_2w_2^*w_1u_2\beta_2(w_1^*w_2). 
$$
Then $\varphi_2$ is a nondegenerate endomorphism such that  
$$
\| \varphi_2\circ \beta_1 (x) - \mathrm{Ad}(\tilde{u}_2)\circ \beta_2 \circ \varphi_2 (x) \| 
=\| \beta_1(x) - u_2\beta_2(x)u_2^*\| <\frac{\delta_3}{2N(\tilde{\gamma})}<\varepsilon_2
$$
for any $x\in F_2$ and
$$
\| \varphi_2\circ \psi_1(x) -x \|=\|w_2^*xw_2 -x\|= \| [x, w_2]\| < \frac{\varepsilon_1}{2}
<\varepsilon_1 
$$
for any $x\in G_1$. 
Furthermore, we have
\begin{align*}
\|\varphi_2\circ \psi_1 (x) (\varphi_2(\tilde{v}_1)\tilde{u}_2-1) \| 
&= \| \lambda_2w_2^*xv_1\beta_1(w_1)u_2\beta_2(w_1^*w_2)-w_2^*x w_2\| \\
&= \|\lambda_2w_2^*xv_1(\beta_1(w_1)u_2-u_2\beta_2(w_1))\beta_2(w_1^*w_2) \\
&\quad +w_2^*x\lambda_2 z_2\beta_2(w_2)-w_2^*xw_2\| \\ 
(3.9.12), (3.9.13)\; 
&< \| w_2^*x\lambda_2 z_2\beta_2(w_2)-w_2^*xw_2\| + \frac{\|x\|\delta_3}{2N(\tilde{\gamma})} \\
&=\| x(\lambda_2 z_2-w_2\beta_2(w_2^*))\| + \frac{\|x\|\delta_3}{2N(\tilde{\gamma})} \\
(3.9.15)\; 
& <\frac{\varepsilon_1}{2}+ \frac{\|x\|\delta_3}{2N(\tilde{\gamma})}<\varepsilon_1 
\end{align*}
for any $x\in G_1$. 

Step 4. Choose a finite subset $G_2$ of $A\otimes B$ such that 
$$
F_2\cup \varphi_2(F_2) \cup G_1\tilde{u}_2 \cup \beta_1^{-1}(F_2^{\prime})
\subset G_2 \eqno{(3.9.16)}
$$
and
$$
u_2\beta_2(u_2)\cdots \beta_2^{k}(u_2)
\in \{x+\lambda 1\; |\; x\in G_2, \lambda\in\mathbb{C}\}
\eqno{(3.9.17)}
$$
for any $0\leq k\leq N(\tilde{\gamma})-1$. 
Applying Proposition \ref{pro:uniqueness}(1) to $\beta_2$, $G_2$, and $\varepsilon_2/2$, 
we obtain $G_2^{\prime}$ and $\delta_4>0$.
We may assume that $\delta_4< \delta_3/2$ and $\delta_4< \varepsilon_2/(2+2\max_{a\in F_2}\| a\|)$. 
Choose a finite subset $G_2^{\prime\prime}$ of $A\otimes B$ such that
$$
G_2\cup G_2^{\prime}\cup \beta_2^{-1}(G_2^{\prime}) \subset G_2^{\prime\prime} \quad
\text{and}\quad
w_1^*w_2\in \{x+\lambda 1\; |\; x\in G_2^{\prime\prime}, \lambda\in\mathbb{C}\}.  \eqno{(3.9.18)}
$$
Corollary \ref{cor:existence}(1)(ii) implies that there exists a unitary element $v_2$ in 
$(A\otimes B)^{\sim}$ such that 
$$
\| \beta_2(x)-\mathrm{Ad}(v_2)\circ \beta_1(x)\| 
< \frac{\delta_4}{2N(\tilde{\gamma})} \eqno{(3.9.19)}
$$
for any $x\in G_2^{\prime\prime}$ and 
$$ 
\| \pi_{\tau_{A\otimes B}}(v_2\beta_1(v_2)\cdots 
\beta_1^{N(\tilde{\gamma})-1}(v_2) )-U^*\|_2 < 
\frac{\delta_4}{2N(\tilde{\gamma})}. \eqno{(3.9.20)}
$$
Put $z_3:=u_2v_2$. Essentially the same argument as in the previous steps implies that 
there exist a unitary element $w_3$ in $(A\otimes B)^{\sim}$ and a complex number 
$\lambda_3$ of modulus one such that 
$$
\| [w_3, x] \|< \frac{\varepsilon_2}{2} \quad \text{and} \quad 
\| x(\lambda_3 z_3 -w_3\beta_1 (w_3^*)) \|
< \frac{\varepsilon_2}{2} \eqno{(3.9.21)}
$$
for any $x\in F_2$. Put 
$$
\psi_2:= \mathrm{Ad}(w_3^*w_1^*w_2)\quad\text{and}\quad 
\tilde{v}_2:=\lambda_1\overline{\lambda_2}\lambda_3w_3^*w_1^*w_2v_2\beta_1(w_2^*w_1w_3). 
$$
Then $\psi_2$ is a nondegenerate endomorphism such that  
$$
\| \psi_2\circ \beta_2 (x) - \mathrm{Ad}(\tilde{v}_2)\circ \beta_1 \circ \psi_2 (x) \| 
=\| \beta_2(x) - v_2\beta_1(x)v_2^*\| <\frac{\delta_4}{2N(\tilde{\gamma})}<\varepsilon_2
$$
for any $x\in G_2$ and
$$
\| \psi_2\circ \varphi_2(x) -x \|=\|w_3^*xw_3 -x\|= \| [x, w_3]\| < \frac{\varepsilon_2}{2}
<\varepsilon_2 
$$
for any $x\in F_2$. Using (3.9.18), (3.9.19), and (3.9.21) instead of (3.9.12), (3.9.13), and (3.9.15), 
we obtain
$$
\|\psi_2\circ \varphi_2 (x) (\psi_2(\tilde{u}_2)\tilde{v}_2-1) \|
<\frac{\varepsilon_2}{2}+ \frac{\|x\|\delta_4}{2N(\tilde{\gamma})}<\varepsilon_2 
$$
for any $x\in F_2$ by essentially the same estimate as in Step 3. 

Repeating this process, we obtain $\{F_n\}_{n\in\mathbb{N}}$, $\{G_n\}_{n\in\mathbb{N}}$,  
$\{\varphi_n\}_{n\in\mathbb{N}}$, $\{\psi_n\}_{n\in\mathbb{N}}$,  $\{\tilde{u}_n\}_{n\in\mathbb{N}}$, 
and $\{\tilde{v}_n\}_{n\in\mathbb{N}}$ satisfying the assumptions of Proposition \ref{pro:intertwining}. 
Therefore $\beta_1$ is outer conjugate to $\beta_2$ by Proposition \ref{pro:intertwining}. 

Using  Proposition \ref{pro:uniqueness}(2) and Corollary \ref{cor:existence}(2) instead of 
Proposition \ref{pro:uniqueness}(1) and Corollary \ref{cor:existence}(1), 
we can prove the case where $N(\tilde{\gamma})=\infty$ by a simplified version of the argument 
above without the estimates on the $2$-norm. 
\end{proof}

Let $\mathcal{D}$ be a stably finite strongly self-absorbing C$^*$-algebra.
We refer the reader to~\cite{TW} for basic properties of strongly self-absorbing C$^*$-algebras.
Let $A$ be a $\mathcal{D}$-stable C$^*$-algebra.
We say that an automorphism $\alpha$ of $A$ is \textit{$\mathcal{D}$-absorbing}
if $\alpha$ is outer conjugate to $\alpha\otimes \mathrm{id}_{\mathcal{D}}$.
Here we regard $\alpha\otimes \mathrm{id}_{\mathcal{D}}$ as an automorphism of $A$
via an isomorphism from $A\otimes\mathcal{D}$ onto $A$.
Note that this definition does not depend on the choice of the isomorphism.
Furthermore, we say that an automorphism $\alpha$ of a C$^*$-algebra $A$ 
is \textit{UHF-absorbing} if there exists a UHF algebra $B$ of infinite type such that 
$A$ is $B$-stable and $\alpha$ is $B$-absorbing. 
We show a $\mathcal{D}$-absorption result for weakly stable automorphisms. 

\begin{lem}\label{lem:d-absorption}
Let $A$ be a simple separable nuclear monotracial C$^*$-algebra, and let $\alpha$ be 
an approximately inner automorphism of $A$. If $S$ is a separable subset of $F(A)$, 
then there exists a unitary element $u$ in $F(A)$ such that 
$\alpha(x)=uxu^*$ for any $x\in S$ and $\varrho(\alpha(u))=\varrho(u)$. 
\end{lem}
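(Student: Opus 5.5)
Since $\alpha$ is approximately inner, I would first produce a unitary $w$ in $F(A)$ with $\alpha(x)=wxw^*$ for all $x\in S$. Choose unitaries $u_n\in M(A)$ with $\alpha(a)=\lim_n u_nau_n^*$ for $a\in A$. For $x=[(x_m)_m]\in S$ with $x_m\in A$, a reindexation/diagonal argument (Kirchberg's $\varepsilon$-test) then gives an element $w_0\in A^{\omega}$ with three properties:
\begin{itemize}
\item $w_0aw_0^*=\alpha(a)$ for $a\in A$;
\item $w_0^*w_0a=w_0w_0^*a=a$;
\item $w_0 x w_0^*=\alpha(x)$ for every $x$ in (lifts of) $S$.
\end{itemize}
This is done as in the proof of Lemma \ref{lem:exist}, using $(u_nh_n)_n$ with an approximate unit, since $S$ is separable. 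We cannot, however, simply take $w$ central, because $\alpha$ acts nontrivially on $A$. The right reformulation is to work in $M(A)^{\omega}$ modulo annihilators, where $w_0$ implements $\alpha$ on $C^*(A\cup S)$.

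The task is then to correct $w_0$ so that its image under $\varrho$ becomes $\tilde\alpha$-invariant. Put $W:=\varrho(w_0)$, a unitary in $\mathcal{M}^{\omega}(A)$, and let $S_0:=\varrho(S)\subset\mathcal{M}_{\omega}(A)$. Then $\tilde\alpha=\mathrm{Ad}(W)$ on $\pi_{\tau_A}(A)''\vee S_0$. Consider the unitary $W^*\tilde\alpha(W)$. A computation like the one for $w$ in Lemma \ref{lem:exist} shows that it commutes with $\pi_{\tau_A}(A)$. Moreover, since $\tilde\alpha(W)$ implements $\tilde\alpha^2\tilde\alpha^{-1}=\tilde\alpha$ on $\tilde\alpha(S_0)\ni$ ..., I would enlarge $S$ to the separable $\alpha$-invariant set $\bigcup_k\alpha^k(S)$ from the start. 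With this enlargement, $W^*\tilde\alpha(W)$ lies in $\mathcal{M}_{\omega}(A)\cap S_0'$.

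We want $Z\in\mathcal{M}_{\omega}(A)\cap S_0'$ with $W^*\tilde\alpha(W)=Z\tilde\alpha(Z^*)$. Given such $Z$, the unitary $WZ$ is $\tilde\alpha$-fixed and still implements $\tilde\alpha$ on $S_0$ and on $A$. To find $Z$, I would use that $\mathcal{M}_{\omega}(A)\cap S_0'$ is again a McDuff-type central sequence algebra. Via a further reindexation (the $\varepsilon$-test), I would replace $S_0$ by commuting with $S_0$ inside $\mathcal{M}_{\omega}(A)$, and apply Connes' $1$-cohomology vanishing:
\begin{itemize}
\item \cite[Theorem 2.1.3]{C2} when $N(\tilde\alpha)=\infty$;
\item \cite[Corollary 2.6]{C3} otherwise.
\end{itemize}
In the finite case this needs the cocycle condition $\prod_{k<N}\tilde\alpha^k(W^*\tilde\alpha(W))=W^*\tilde\alpha^N(W)=1$ relative to the central sequence. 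Here approximate innerness is used: $\tilde\alpha$ acts on $\mathcal{M}_{\omega}(A)$ as an approximately inner, hence in Connes' sense centrally trivial-compatible, automorphism.

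Finally, I would lift $Z$ to a unitary $z\in F(A)\cap S'$ using the surjectivity of $\varrho$ on relative commutants, as explained after the definition of $\mathcal{M}_{\omega}(A)$. Then I would put $u:=[w_0z]$, working in $F(A)$ via the identification with $(M(A)^{\omega}\cap A')/\mathrm{Ann}$. This still conjugates $\alpha$ correctly on $S$, since $z$ commutes with $S$, and it satisfies $\varrho(\alpha(u))=\varrho(u)$. Strictly, $w_0$ itself is not in $A'$, so to land in $F(A)$ I should instead use the standard trick: $\alpha$ acts on $F(A)$, and approximate innerness gives $u\in F(A)$ with $\alpha|_S=\mathrm{Ad}(u)$ directly by the argument of \cite[Lemma 4.3]{M1}, cited in the Remark. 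I would adopt that as the first step, obtaining a unitary $w\in F(A)$ with $\alpha=\mathrm{Ad}(w)$ on $\bigcup_k\alpha^k(S)\cup\{w\}$ if needed.

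The main obstacle is the cohomology step. It requires verifying that the cocycle $W^*\tilde\alpha(W)$ is trivial in the appropriate finite-$N$ sense, and that Connes' vanishing holds in the relative commutant $\mathcal{M}_{\omega}(A)\cap S_0'$ rather than in $\mathcal{M}_{\omega}(A)$. I would try to reduce this to $\mathcal{M}_{\omega}(A)$ by a reindexation argument, treating $S_0$ as being "absorbed" into $\pi_{\tau_A}(A)''$ at a suitable level of the ultrapower.
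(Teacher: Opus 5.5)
\textbf{There is a genuine gap at the step you call the main obstacle.} Once you discard the non-central $w_0$ and start, as you eventually propose, from a unitary $w\in F(A)$ with $\alpha=\mathrm{Ad}(w)$ on $\bigcup_{k}\alpha^{k}(S)$ from \cite[Lemma 4.3]{M1}, your outline matches the paper's. The element $c:=\varrho(w^*\alpha(w))$ is an $\tilde{\alpha}$-cocycle in $\mathcal{M}_{\omega}(A)\cap\varrho(S)'$; enlarging $S$ to its $\alpha$-orbit is harmless and makes $\tilde{\alpha}$ act on this relative commutant. One then solves $c=Z\tilde{\alpha}(Z^*)$ there, lifts $Z$ to a unitary $z\in F(A)\cap S'$, and puts $u:=wz$.

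Connes' results \cite[Theorem 2.1.3]{C2} and \cite[Corollary 2.6]{C3} only give $Z\in\mathcal{M}_{\omega}(A)$. Such a $Z$ need not commute with $\varrho(S)$, and then $wz$ no longer implements $\alpha$ on $S$. Your reindexation reduction does not work as stated. The cocycle $c$ is given, so reindexing a solution of the absolute equation into $\varrho(S)'$ destroys the identity $c=Z\tilde{\alpha}(Z^*)$. Enlarging $\pi_{\tau_A}(A)''$ by $\varrho(S)$ only gives a relative commutant in $\mathcal{M}^{\omega}(A)$, not the central sequence algebra of a factor to which Connes' theorems apply.

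What you need is a genuinely relative $1$-cohomology vanishing theorem for the action induced by $\tilde{\alpha}$ on $\mathcal{M}_{\omega}(A)$: the outer $\mathbb{Z}_{N(\tilde{\alpha})}$-action, resp.\ the aperiodic $\mathbb{Z}$-action. It must hold in $\mathcal{M}_{\omega}(A)\cap\varrho(S)'$ for separable $S$. The paper takes exactly this from \cite[Proposition 7.2]{Oc}. To prove it yourself, rerun Connes' Rohlin-tower argument inside the relative commutant; there reindexation is legitimate, but you apply it to the towers, not to the cocycle. You also need outerness of the induced action, which follows from nuclearity (so $\pi_{\tau_A}(A)''$ is injective) and \cite[Theorem 3.2]{C3}; your proposal does not address this.

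\textbf{Your justification of the cocycle identity is wrong.} Approximate innerness is used only to obtain $w$. Write $N=N(\tilde{\alpha})$ and $W=\varrho(w)$. The identity $\prod_{k=0}^{N-1}\tilde{\alpha}^{k}(W^*\tilde{\alpha}(W))=W^*\tilde{\alpha}^{N}(W)=1$ holds because $\tilde{\alpha}^{N}=\mathrm{Ad}(U)$ with $U\in\pi_{\tau_A}(A)''$, and $U$ commutes with the central element $W$. Hence $\tilde{\alpha}^{N}$ is trivial on $\mathcal{M}_{\omega}(A)$.

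With your first choice $W=\varrho(w_0)$, which implements $\tilde{\alpha}$ on $\pi_{\tau_A}(A)''$, you get instead $W^*UWU^*=\tilde{\alpha}^{-1}(U)U^*=\overline{\Lambda(\tilde{\alpha})}$. So that route fails whenever $\Lambda(\tilde{\alpha})\neq 1$, which is exactly why Lemma~\ref{lem:exist} assumes $\Lambda(\tilde{\gamma})=1$. It also produces an element outside $F(A)$.

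Finally, do not put $w$ into the set on which $\mathrm{Ad}(w)=\alpha$. That would force $\alpha(w)=w$, which is stronger than the conclusion and not available from \cite[Lemma 4.3]{M1}.
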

\begin{proof}
By \cite[Lemma 4.3]{M1}, there exists a unitary element $v$ in $F(A)$ such that 
$\alpha (x)=vxv^*$ for any $x\in S\cup\alpha^{-1}(S)$. 
Since we have 
\begin{align*}
v^*\alpha (v)x=v^*\alpha(v\alpha^{-1}(x))=v^*\alpha(xv)=v^*\alpha(x)\alpha(v)=xv^*\alpha (v) 
\end{align*}
for any $x\in S$, $v^*\alpha (v)$ is a unitary element in $F(A)\cap S^{\prime}$. 
First, we consider the case where $N(\tilde{\alpha})<\infty$. 
The same argument as in the proof of Proposition \ref{pro:cohomology-vanishing} implies that 
$\tilde{\alpha}$ induces an outer action of $\mathbb{Z}_{N(\tilde{\alpha})}$
on $\mathcal{M}_{\omega}(A)$. Since we have 
\begin{align*}
\varrho(v^*\alpha (v))\tilde{\alpha}(\varrho(v^*\alpha(v)))\cdots 
&\tilde{\alpha}^{N(\tilde{\alpha})-1}(\varrho(v^*\alpha(v))) \\
&=\varrho (v^*\alpha(v)\alpha(v^*\alpha(v))\cdots 
\alpha^{N(\tilde{\alpha})-1}(v^*\alpha(v))) \\
&=\varrho(v^*\alpha^{N(\tilde{\alpha})}(v)) \\
&=\varrho(v^*)\tilde{\alpha}^{N(\tilde{\alpha})}(\varrho(v)) \\
&=\varrho(v^*)\varrho(v)=1,  
\end{align*}
there exists a unitary element $W$ in $\mathcal{M}_{\omega}(A)\cap \varrho(S)^{\prime}$ 
such that $\varrho(v^*\alpha(v))= W\tilde{\alpha}(W^*)$ 
by \cite[Proposition 7.2]{Oc}. Note that we have $\varrho(v)W=\tilde{\alpha}(\varrho(v)W)$. 
By  the surjectivity of $\varrho$, there exists a unitary element $w$ in 
$F(A)\cap S^{\prime}$ such that $\varrho(w)=W$. Put $u:=vw$. 
Then $u$ is the desired unitary element. For the case where $N(\tilde{\alpha})=\infty$, 
$\tilde{\alpha}$ induces an outer action of $\mathbb{Z}$ on $\mathcal{M}_{\omega}(A)$. 
Hence we obtain the conclusion by essentially the same argument. 
\end{proof}

\begin{pro}\label{pro:d-absorption}
Let $A$ be a simple separable nuclear monotracial C$^*$-algebra and $\mathcal{D}$ 
a stably finite strongly self-absorbing C$^*$-algebra. 
Assume that $A$ is $\mathcal{D}$-stable. 
If $\alpha$ is a weakly stable, approximately inner automorphism of $A$, then $\alpha$ is 
$\mathcal{D}$-absorbing. 
\end{pro}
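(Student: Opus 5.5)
The plan is to use the standard criterion for $\mathcal{D}$-absorption of automorphisms, which goes back to Kishimoto, Toms--Winter and Szab\'o. For a separable $\mathcal{D}$-stable algebra $A$, the automorphism $\alpha$ is cocycle conjugate (in particular outer conjugate) to $\alpha\otimes\mathrm{id}_{\mathcal{D}}$ provided there is a unital homomorphism
$$
\Phi\colon \mathcal{D}\to F(A)^{\alpha}:=\{x\in F(A)\;|\;\alpha(x)=x\}.
$$
For $\mathbb{Z}$-actions one may even relax this: it suffices that for every separable $S\subset F(A)$ there is a unital homomorphism $\mathcal{D}\to F(A)\cap S'$ whose image is fixed by $\alpha$. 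So the whole task is to produce $\alpha$-fixed copies of $\mathcal{D}$ in $F(A)$. Since $A$ is $\mathcal{D}$-stable, we already have some unital homomorphism $\Phi_0\colon\mathcal{D}\to F(A)$, and by a reindexation argument we may take its image to commute with any prescribed separable subset.

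First I would conjugate $\Phi_0$ to make it approximately fixed in the tracial sense. Apply Lemma~\ref{lem:d-absorption} to the separable set $S:=\Phi_0(\mathcal{D})$. This gives a unitary $u\in F(A)$ with $\alpha(x)=uxu^*$ for all $x\in S$ and $\varrho(\alpha(u))=\varrho(u)$. Now try to correct $u$ by a coboundary, in the spirit of Herman--Ocneanu.
\begin{itemize}
\item Seek a unitary $v$ (in $F(A)$, ideally in $F(A)\cap S'$ after reindexation) with $\lambda u=v\alpha(v^*)$ for some $\lambda\in\mathbb{T}$.
\item Then $\Phi:=\mathrm{Ad}(v^*)\circ\Phi_0$ satisfies $\alpha(\Phi(d))=\alpha(v^*)\,u\,\Phi_0(d)\,u^*\alpha(v)$.
\item Since $\alpha(v^*)=\bar\lambda v^*u$, this equals $v^*u^*u\Phi_0(d)u^*u v=\Phi(d)$.
\end{itemize}
The scalar $\lambda$ cancels because $\mathrm{Ad}(\lambda u)=\mathrm{Ad}(u)$. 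This is exactly why weak stability, which allows a scalar multiple, is enough.

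To obtain $v$, we need the hypothesis $\varrho(u)=1$ in the definition of weak stability, but we only know $\varrho(u)$ is $\tilde{\alpha}$-fixed. The fix is to modify $u$ by an element $w$ commuting with $S$ so that $\varrho(uw)=1$ while $\alpha(x)=uwx(uw)^*$ on $S$ is preserved. For this I want a unitary in $\mathcal{M}_\omega(A)\cap\varrho(S)'$ equal to $\varrho(u)^*$.

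This is the main obstacle: $\varrho(u)$ need not commute with $\varrho(S)$. My first attempt would be to use the freedom in choosing $\Phi_0$. Apply Lemma~\ref{lem:d-absorption} to a larger separable set $S_1=S\cup\{\text{stuff}\}$. Then replace $\Phi_0$ by a reindexed copy $\Phi_1$ whose image commutes with $u$, $\varrho$-lifts, and $S_1$. This uses the fact that $F(A)\cap T'$ still contains unital copies of $\mathcal{D}$ for separable $T$, by $\mathcal{D}$-stability and strong self-absorption. A diagonal/reindexing argument (Kirchberg's $\varepsilon$-test) should then make $\alpha=\mathrm{Ad}(u)$ on $\Phi_1(\mathcal{D})$ with $u$ commuting with $\Phi_1(\mathcal{D})$. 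In that case $\alpha$ is simply trivial on $\Phi_1(\mathcal{D})$. Indeed, approximate innerness realized in $F(A)$ by a unitary commuting with the copy forces the copy to be $\alpha$-fixed.

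More concretely, the cleaner route may be as follows.
\begin{enumerate}
\item From Lemma~\ref{lem:d-absorption} with $S=\emptyset$ (any separable set), get $u$ with $\varrho$-invariance.
\item Set $u':=u\,\tilde w$, where $\tilde w$ lifts $\varrho(u)^*$; this requires $\varrho(u)\in\mathcal{M}_\omega(A)$, which holds since $u\in F(A)$. Then $\varrho(u')=1$, and $u'$ still implements $\alpha$ on $S$ modulo $\ker\varrho$.
\item Weak stability gives $\lambda u'=v\alpha(v^*)$.
\item Conclude with $\Phi=\mathrm{Ad}(v^*)\circ\Phi_0$ via the $\varepsilon$-test, so that the kernel errors are removed by choosing $\Phi_0$ inside the relative commutant of everything constructed.
\end{enumerate}
I expect the delicate bookkeeping to lie exactly in reconciling exact implementation on $S$ with the $\varrho$-normalization. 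This would be handled by working in $F(A)\cap S'$, using the relative surjectivity of $\varrho$ stated in the preliminaries.
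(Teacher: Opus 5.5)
Your framework is right: use Szab\'o's criterion to reduce to a unital homomorphism $\mathcal{D}\to F(A)^{\alpha}$. Lemma~\ref{lem:d-absorption} then gives a unitary $u$ with $\alpha=\mathrm{Ad}(u)$ on $\varphi(\mathcal{D})$ and $\tilde\alpha(\varrho(u))=\varrho(u)$. Your computation is also correct: once an implementing unitary $u$ satisfies $\lambda u=v\alpha(v^*)$, the map $\mathrm{Ad}(v^*)\circ\varphi$ lands in $F(A)^\alpha$ and the scalar cancels.

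\textbf{The gap.} It lies in how you reach the hypothesis of weak stability. You cannot normalize to $\varrho(u')=1$ while keeping $\alpha=\mathrm{Ad}(u')$ on $S=\varphi(\mathcal{D})$. Together these two conditions force $\tilde\alpha(\varrho(x))=\varrho(x)$ for all $x\in S$, which fails for a general copy of $\mathcal{D}$.

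\begin{itemize}
\item Your step (2) is incorrect. With $\tilde w$ a lift of $\varrho(u)^*$, you get $\varrho(u'xu'^*)=\varrho(x)$, whereas $\varrho(\alpha(x))=\varrho(u)\varrho(x)\varrho(u)^*$. So modulo $\ker\varrho$, $u'$ implements the identity, not $\alpha$.
\item A lift of $\varrho(u)^*$ inside $F(A)\cap S'$ exists only if $\varrho(u)\in\varrho(S)'$, which is exactly what is not known. You identified this obstacle yourself, but it is fatal for this normalization.
\item Taking $S=\emptyset$ in step (1) makes $u$ implement nothing on $\varphi(\mathcal{D})$.
\item Your ``first attempt'' (a copy $\Phi_1(\mathcal{D})$ commuting with a unitary that implements $\alpha$ on it) is circular. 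That situation is literally the statement that $\alpha$ fixes $\Phi_1(\mathcal{D})$. Reindexing cannot produce it, since $u$ depends on the copy.
\end{itemize}

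\textbf{The missing idea.} Pass through Proposition~\ref{pro:cohomology-vanishing}, i.e.\ Connes' cohomology vanishing on $\mathcal{M}_\omega(A)$ combined with weak stability. It only requires the $N$-fold cocycle product to be trivial under $\varrho$, where $N=N(\tilde\alpha)$.

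\begin{itemize}
\item If $N=\infty$, no normalization is needed: Proposition~\ref{pro:cohomology-vanishing}(2) applies to $u$ directly.
\item If $N<\infty$, use that $\tilde\alpha^{N}$ is inner on $\pi_{\tau_A}(A)^{''}$ and hence acts trivially on $\mathcal{M}_\omega(A)$. Therefore $\varrho(u)^N$ commutes with $\varrho(\varphi(\mathcal{D}))$, even though $\varrho(u)$ need not.
\item So $\varrho(u)^N$ lies in the von Neumann algebra $\mathcal{M}_\omega(A)^{\tilde\alpha}\cap\varrho(\varphi(\mathcal{D})\cup\{u\})'$. Take an $N$-th root $W$ there and lift it to a unitary $w\in F(A)\cap(\varphi(\mathcal{D})\cup\{u\})'$. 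Put $v:=uw^*$.
\item Then $v$ still implements $\alpha$ on $\varphi(\mathcal{D})$, and $\varrho(v^*\alpha(v^*)\cdots\alpha^{N-1}(v^*))=W^N\varrho(u)^{*N}=1$.
\item Proposition~\ref{pro:cohomology-vanishing}(1) now gives $\lambda v^*=z\alpha(z^*)$, and $\mathrm{Ad}(z^*)\circ\varphi$ is the required map into $F(A)^\alpha$.
\end{itemize}

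In short, the correct quantity to normalize is $\varrho(u)^N$, which commutes with $\varrho(S)$, not $\varrho(u)$, which does not.
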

\begin{proof}
By \cite[Corollary 3.8]{Sza1} (see also \cite{Sza1c}), it is enough to show that 
there exists a unital homomorphism from $\mathcal{D}$ to $F(A)^{\alpha}$. 
Since $A$ is $\mathcal{D}$-stable, there exists a unital homomorphism $\varphi$ from 
$\mathcal{D}$ to $F(A)$. 

First, we consider the case where $N(\tilde{\alpha})<\infty$. 
Lemma \ref{lem:d-absorption} implies that there exists a unitary element $u$ in $F(A)$ 
such that $\alpha(x)=uxu^*$ for any $x\in \varphi (\mathcal{D})$ and $\varrho(\alpha(u))
=\varrho(u)$. Since we have 
\begin{align*}
\varrho(u)^{N(\tilde{\alpha})}\varrho (x)\varrho(u^*)^{N(\tilde{\alpha})}=
\varrho(\alpha^{N(\tilde{\alpha})}(x))=\tilde{\alpha}^{N(\tilde{\alpha})}(\varrho(x))=\varrho(x)
\end{align*}
for any $x\in \varphi (\mathcal{D})$, $\varrho(u)^{N(\tilde{\alpha})}$ is a unitary element 
in $\mathcal{M}_{\omega}(A)^{\tilde{\alpha}}\cap \varrho(\varphi(\mathcal{D})\cup\{u\})^{\prime}$. 
Take a unitary element $W$ in 
$\mathcal{M}_{\omega}(A)^{\tilde{\alpha}}\cap 
\varrho(\varphi(\mathcal{D})\cup\{ u\})^{\prime}$ such that 
$W^{N(\tilde{\alpha})}=\varrho(u)^{N(\tilde{\alpha})}$. 
Since $\varrho$ is surjective, there exists a unitary element $w$ in 
$F(A)\cap (\varphi(\mathcal{D})\cup\{ u\})^{\prime}$ such that $\varrho (w)=W$. 
Put $v:=uw^*$. Then we have $\alpha(x)=vxv^*$ for any $x\in \varphi (\mathcal{D})$ and 
\begin{align*}
\varrho(v^*\alpha(v^*)\cdots \alpha^{N(\tilde{\alpha})-1}(v^*))
&=W\varrho(u)^*\tilde{\alpha}(W)\varrho(\alpha (u))^*\cdots 
\tilde{\alpha}^{N(\tilde{\alpha})-1}(W)\varrho(\alpha^{N(\tilde{\alpha})-1}(u))^* \\
&=W\varrho(u)^*W\varrho(u)^*\cdots W\varrho (u)^*
=W^{N(\tilde{\alpha})}\varrho(u^{N(\tilde{\alpha})})^*=1.
\end{align*}
Since $\alpha$ is weakly stable, Proposition \ref{pro:cohomology-vanishing}(1) implies that there exist 
a unitary element $z$ in $F(A)$ and a complex number $\lambda$ of modulus one such 
that $\lambda v^*=z\alpha(z^*)$. 
Since we have 
$$
\alpha (\varphi (a))=v\varphi (a)v^*=\overline{\lambda} v\varphi (a) \lambda v^*
=\alpha(z)z^*\varphi(a) z\alpha (z^*)
$$ 
for any $a\in\mathcal{D}$, 
$$
\alpha(z^*\varphi(a)z)=z^*\varphi (a)z
$$ 
for any $a\in \mathcal{D}$. 
Therefore $\mathrm{Ad}(z^*)\circ \varphi$ is a unital homomorphism from $\mathcal{D}$ 
to $F(A)^{\alpha}$. 

Using  Proposition \ref{pro:cohomology-vanishing}(2) instead of  
Proposition \ref{pro:cohomology-vanishing}(1), 
we can prove the case where $N(\tilde{\alpha})=\infty$ by a similar but simpler argument. 
\end{proof}

\section{UHF-absorbing automorphisms of $\mathcal{W}$}\label{sec:uhf}

In this section, we show that every aperiodic UHF-absorbing automorphism of 
$\mathcal{W}$ is weakly stable. 
Moreover, we show that every aperiodic UHF-absorbing automorphism $\alpha$ of 
$\mathcal{W}$ is 
$\mathcal{W}$-absorbing, that is, 
$\alpha$ is outer conjugate to $\alpha\otimes \mathrm{id}_{\mathcal{W}}$ 
(via an isomorphism from $\mathcal{W}\otimes\mathcal{W}$ onto $\mathcal{W}$). 
We also show that every aperiodic UHF-absorbing automorphism of a simple unital 
monotracial AF algebra is weakly stable. 

First, we consider some properties of aperiodic automorphisms of simple separable nuclear 
monotracial C$^*$-algebras. 
The following proposition is an immediate corollary of \cite[Propositions 3.11 and 3.12]{Na5}.

\begin{pro}\label{pro:4.1}
Let $A$ be a simple separable non-type I nuclear monotracial C$^*$-algebra with strict comparison, 
and let $\alpha$ be an aperiodic automorphism of $A$. 
Then $F(A)^{\alpha}$ is monotracial and has strict comparison. 
Furthermore, if $a$ and $b$ are positive elements in $F(A)^{\alpha}$ satisfying 
$d_{\tau_{A, \omega}}(a)<d_{\tau_{A, \omega}}(b)$, then there exists an 
element $r$ in $F(A)^{\alpha}$ such that $r^*br=a$. 
\end{pro}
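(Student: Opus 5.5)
The statement is described as an immediate corollary of \cite[Propositions 3.11 and 3.12]{Na5}. The plan is therefore to check that the hypotheses there match ours and to read off the three conclusions. No new construction is needed.

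The natural split is as follows. \cite[Proposition 3.11]{Na5} should give that $F(A)^{\alpha}$ is monotracial with unique tracial state the restriction of $\tau_{A,\omega}$. \cite[Proposition 3.12]{Na5} should give strict comparison for $F(A)^{\alpha}$, in the form of the comparison statement we want.

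For monotraciality, I would argue along the lines of that proof. Let $T$ be a tracial state on $F(A)^{\alpha}$. Using Lemma \ref{lem:equivariant-si} and Theorem \ref{thm:small-rohlin}, which provide partial isometries with orthogonal $\alpha$-translates, one shows the following: any positive element of $F(A)^{\alpha}$ with zero $\tau_{A,\omega}$-trace is dominated by something that averages out to small trace under $T$. Hence $T$ vanishes on $\ker\tau_{A,\omega}$-type elements. Combined with the fact that $\mathcal{M}_{\omega}(A)^{\tilde\alpha}$ is a factor, via aperiodicity and Connes' results, this forces $T=\tau_{A,\omega}$.

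For the comparison statement, take $a,b\in F(A)^{\alpha}_+$ with $d_{\tau_{A,\omega}}(a)<d_{\tau_{A,\omega}}(b)$. The first step is to pass to $\mathcal{M}_\omega(A)^{\tilde\alpha}$, which is a II$_1$ factor. There the support projection of $\varrho(a)$ is subequivalent to that of $\varrho(b)$. Lift the resulting partial isometry via surjectivity of $\varrho$ on relative commutants; the fixed-point version is the argument after \cite[Proposition 3.7]{Na5}.

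Next, correct the error, which lies in $\ker\varrho$. Here I would use that $\alpha$ is aperiodic, together with Lemma \ref{lem:equivariant-si} applied to an $e$ with $\tau_{A,\omega}(e)=0$ and an $f$ obtained from a cut-down of $b$ with $\inf_m\tau_{A,\omega}(f^m)>0$ (possible since the strict inequality leaves room). This absorbs the trace-zero defect into $b$. Averaging over a Rohlin-type tower, as in the equivariant (SI) arguments, produces an $\alpha$-invariant $r$ with $r^*br=a$.

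I expect the main obstacle to be this last step: making the element \emph{exactly} $\alpha$-fixed while keeping the exact equation $r^*br=a$. I would follow \cite[Proposition 3.12]{Na5} directly, which handles it via the equivariant property (SI) and a reindexation/diagonal argument.
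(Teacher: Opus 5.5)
Your proposal matches the paper. The paper gives no independent argument and reads the statement off directly from \cite[Propositions 3.11 and 3.12]{Na5}, exactly as you plan. Your sketches of the underlying arguments are in the right spirit, using trace-kernel vanishing via the small Rohlin towers of Lemma~\ref{lem:equivariant-si}, passage to $\mathcal{M}_{\omega}(A)^{\tilde{\alpha}}$, and lifting and correcting in $\ker(\varrho)$, but they are not needed here.
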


If $B$ is a UHF algebra of infinite type, then there exists a prime number $k$ such that 
$B\otimes \bigotimes_{n\in\mathbb{N}}M_{k}(\mathbb{C})$ is isomorphic to $B$. 
Also, every $B$-stable simple exact monotracial C$^*$-algebra has strict comparison 
(see \cite{Ror2}). 
By essentially the same proof as that of \cite[Proposition 4.2]{Na2} 
(see also \cite[Proposition 4.2]{Na5}), 
using Proposition \ref{pro:4.1} instead of \cite[Proposition 4.1]{Na2}, 
we obtain the following proposition. 

\begin{pro}\label{pro:4.2}
Let $A$ be a simple separable nuclear monotracial C$^*$-algebra, 
and let $\alpha$ be an aperiodic UHF-absorbing automorphism of $A$. Then the following hold. \ \\
(1) There exists a prime number $k$ such that for any $N\in\mathbb{N}$, there exists a 
unital homomorphism from $M_{k^{N}}(\mathbb{C})$ to $F(A)^{\alpha}$. \ \\
(2) For any $\theta\in [0,1]$, there exists a non-zero projection $p$ in 
$F(A)^{\alpha}$ such that $\tau_{A, \omega}(p)=\theta$. \ \\
(3) Let $h$ be a positive element in $F(A)^{\alpha}$ such that 
$d_{\tau_{A, \omega}}(h)>0$. For any $\theta\in [0, d_{\tau_{A, \omega}}(h))$, 
there exists a non-zero projection $p$ in $\overline{hF(A)^{\alpha}h}$ such that 
$\tau_{A, \omega}(p)=\theta$.
\end{pro}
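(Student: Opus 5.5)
We prove the three parts in turn; (2) and (3) both follow from (1) combined with the comparison properties of $F(A)^{\alpha}$ in Proposition \ref{pro:4.1}.

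\emph{Part (1).} Since $\alpha$ is UHF-absorbing, there are a UHF algebra $B$ of infinite type and an automorphism $\theta$ of $A$ with $\alpha$ outer conjugate to $\theta\circ(\alpha\otimes\mathrm{id}_B)\circ\theta^{-1}$. Choose a prime $k$ with $B\cong B\otimes\bigotimes_{n}M_k(\mathbb{C})$. Inner perturbations do not change the induced automorphism of $F(A)$: $\mathrm{Ad}(u)$ with $u\in\mathcal{U}(M(A))$ acts trivially on $F(A)$, because elements of $A^{\omega}\cap A'$ commute with $u$ modulo $\mathrm{Ann}(A,A^{\omega})$. Hence it suffices to treat $\alpha\otimes\mathrm{id}_B$ on $A\otimes B$. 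Let $\iota_n\colon M_{k^N}(\mathbb{C})\to B$ be unital embeddings into tensor factors that escape to infinity. The elements $(1\otimes\iota_n(x))_n$, with the unit replaced by $(h_n\otimes 1)$ for an approximate unit $(h_n)$ if $A$ is nonunital, give a unital homomorphism $M_{k^N}(\mathbb{C})\to F(A\otimes B)$. Its image is fixed by $\alpha\otimes\mathrm{id}_B$, modulo checking that $\alpha(h_n)-h_n$ is asymptotically annihilating on $A$. Transporting back along the conjugacy gives the required map into $F(A)^{\alpha}$.

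\emph{Part (2).} Write $\theta=\sum_{j\ge1}c_jk^{-j}$ with $c_j\in\{0,\dots,k-1\}$. For each $N$, part (1) yields a projection $p_N\in F(A)^{\alpha}$ with $\tau_{A,\omega}(p_N)=\sum_{j\le N}c_jk^{-j}$; here we use that $\tau_{A,\omega}$ restricted to the image of $M_{k^N}(\mathbb{C})$ is the normalized trace, since $F(A)^{\alpha}$ is monotracial. To pass to the exact value $\theta$, we use the standard reindexing/diagonal argument in $F(A)^{\alpha}$, whose countable saturation follows from that of $A^{\omega}$ together with the separability of the constraints. Concretely, we build a compatible increasing sequence of projections: inside $(1-p_N)F(A)^{\alpha}(1-p_N)$ we again embed matrix algebras, via a relative version of (1) obtained from the $\sigma$-ideal / relative commutant form of the construction. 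The intersection of these countably many conditions is then realized by a single projection $p$ with $\tau_{A,\omega}(p)=\theta$. If $\theta>0$ this $p$ is nonzero; the case $\theta=0$ is degenerate, since the statement asks for a nonzero projection, and is presumably handled separately by taking $p$ in $\ker\varrho$.

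\emph{Part (3).} Take $p$ from (2) with $\tau_{A,\omega}(p)=\theta<d_{\tau_{A,\omega}}(h)$. Since $d(p)=\tau(p)<d(h)$, Proposition \ref{pro:4.1} gives $r\in F(A)^{\alpha}$ with $r^*hr=p$. Put $s:=h^{1/2}r$. Then $s^*s=p$, so $s$ is a partial isometry and $ss^*$ is a projection in $\overline{hF(A)^{\alpha}h}$ with trace $\theta$.

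The main obstacle is the exactness in (2) when $\theta$ is not $k$-adic. We expect to resolve it using the saturation of $F(A)^{\alpha}$, following \cite[Proposition 4.2]{Na2}.
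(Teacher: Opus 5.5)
Your overall architecture matches the paper's, which defers to \cite[Proposition 4.2]{Na2}: (1) comes from the $B$-absorption of $\alpha$, (2) from (1) by a diagonal argument, and (3) from (2) together with Proposition~\ref{pro:4.1}. Your treatment of (1) is fine, and so is the partial-isometry trick $s=h^{1/2}r$ in (3). Two points, however, are not yet proved.

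First, Proposition~\ref{pro:4.1} assumes that $A$ is non-type I and has strict comparison, and neither is a hypothesis here. You have to derive them. $A$ is $B$-stable for a UHF algebra $B$ of infinite type, so it is non-type I. A $B$-stable simple exact monotracial C$^*$-algebra has strict comparison by R{\o}rdam \cite{Ror2}. This is exactly the remark the paper makes before the proposition, and without it your part (3) is unjustified.

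Second, the case $\theta=0$ of (2) is a genuine part of the statement, and (3) with $\theta=0$ depends on it. Your suggestion ``take $p$ in $\ker\varrho$'' is circular: the existence of a nonzero $\alpha$-invariant projection of trace zero is precisely what must be shown. It follows from the same $\varepsilon$-test, fed with the projections of trace $k^{-N}$ from (1), once you add a constraint preventing the limit from vanishing. For instance, require $\|ea_0\|=\|a_0\|$ for a fixed nonzero $a_0\in A$. Every nonzero projection $[e]\in F(A)$ satisfies this, because $a\mapsto ea$ is a $*$-homomorphism from the simple algebra $A$ into $A^{\omega}$, hence isometric once nonzero.

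The exactness for non-$k$-adic $\theta$, which you call the main obstacle, needs no nesting. It suffices to have projections $p_N\in F(A)^{\alpha}$ with $\tau_{A,\omega}(p_N)\to\theta$. Then apply Kirchberg's $\varepsilon$-test to positive contractions $e\in A^{\omega}$ with the constraints $[e,a_j]=0$, $(e^2-e)a_j=0$, $(\alpha(e)-e)a_j=0$ for a dense sequence $(a_j)$ in $A$, and $\tau_{A,\omega}(e)=\theta$. The trace constraint is legitimate because $\tau_{A,\omega}$ vanishes on $\mathrm{Ann}(A,A^{\omega})$, so the trace of $[e]$ can be read off any lift.

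As described, your corner scheme would in general not produce the exact $k$-adic partial sums. A unital copy of $M_{k^{N'}}(\mathbb{C})$ in $(1-p_N)F(A)^{\alpha}(1-p_N)$ only contains projections whose traces lie in $\tau_{A,\omega}(1-p_N)k^{-N'}\mathbb{Z}$, and these typically miss $c_{N+1}k^{-N-1}$. If you want a nested sequence anyway, the construction in (1) already gives a unital embedding of all of $M_{k^{\infty}}$ into $F(A)^{\alpha}$, where such projections exist trivially.
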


Next, we consider some properties of $F(\mathcal{W})$. 
Using \cite[Theorem 5.2]{Na2}, we obtain the following theorem  
by essentially the same proof as that of \cite[Theorem 5.3]{Na2} (see also \cite[Theorem 5.8]{Na2} 
and \cite[Section 5]{Na3}). 

\begin{thm}\label{thm:normal-classification}
Let $\Omega$ be a compact metrizable space, and let $\varphi$ and $\psi$ be 
unital homomorphisms from $C(\Omega)$ to $F(\mathcal{W})$. 
If $\tau_{\mathcal{W}, \omega}(\varphi (f))=\tau_{\mathcal{W}, \omega}(\psi (f))>0$ for any 
$f\in C(\Omega)_{+}\setminus\{0\}$, then $\varphi$ and $\psi$ are unitarily equivalent.  
\end{thm}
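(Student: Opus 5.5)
We would follow the standard route of classifying commuting normal elements (or unital abelian subalgebras) in a central sequence algebra up to unitary equivalence by their induced trace data, as in \cite[Theorem 5.3]{Na2}. The key input is \cite[Theorem 5.2]{Na2}, which we take to say that two projections, or more generally two positive elements, in $F(\mathcal{W})$ with the same trace are Murray--von Neumann equivalent via a partial isometry in $F(\mathcal{W})$. We also use that $F(\mathcal{W})$ is monotracial with strict comparison and has projections of every trace in $[0,1]$.

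\textbf{Step 1 (reduction to finite-dimensional partitions).} Write $\Omega$ as an inverse limit of finite open covers, or approximate $C(\Omega)$ by finite-dimensional pieces. Fix a finite partition of unity $\{f_1,\dots,f_m\}$ subordinate to a fine open cover. The hypothesis gives $\tau(\varphi(f_j))=\tau(\psi(f_j))$ for all $j$, and more generally the two pushforward measures $\mu_\varphi=\mu_\psi=:\mu$ on $\Omega$ coincide and have full support.

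\textbf{Step 2 (replacing by projections).} Choose closed sets $K_j$ with small boundary measure. Using Proposition~\ref{pro:4.2}-type divisibility and strict comparison, we inductively cut down the hereditary subalgebras $\overline{\varphi(f)F(\mathcal{W})\varphi(f)}$ for $f$ supported near $K_j$. This produces mutually orthogonal projections $p_j$ in $F(\mathcal{W})\cap\varphi(C(\Omega))'$ that approximately commute and almost diagonalize $\varphi$, with $\tau(p_j)$ prescribed in advance, equal to rational approximations of $\mu(K_j)$ summing to one. Here the full support of $\mu$ is what guarantees room to find such projections. We do the same for $\psi$, obtaining projections $q_j$ with $\tau(q_j)=\tau(p_j)$. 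By \cite[Theorem 5.2]{Na2}, $p_j\sim q_j$, and summing the partial isometries gives a unitary $u$ with $\|u\varphi(f)u^*-\psi(f)\|$ small on a prescribed finite set, with an error depending on the mesh of the cover.

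\textbf{Step 3 (exact equivalence).} Since $F(\mathcal{W})$ is built from an ultrapower, approximate unitary equivalence on finite sets with errors $\to0$ upgrades to exact unitary equivalence by Kirchberg's $\varepsilon$-test or a diagonal reindexing argument; separability of $C(\Omega)$ makes this available. Alternatively, the whole argument can be carried out in the relative commutant of a separable set and finished by reindexing.

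The main obstacle is Step 2: producing projections commuting with $\varphi(C(\Omega))$ that nearly diagonalize it while matching traces exactly, when $F(\mathcal{W})$ itself is non-unital-like and projectionless in its trace-zero part. We would first try to apply \cite[Theorem 5.2]{Na2} directly to positive elements $\varphi(f)$, $\psi(f)$ rather than to projections, using the $\mathcal{W}$-specific feature that equality of traces (of all powers) implies equivalence. We would then handle the joint equivalence of the family by an induction on $m$, working in corners $(1-p_1)F(\mathcal{W})(1-p_1)$, which are again of the same type as $F(\mathcal{W})$.
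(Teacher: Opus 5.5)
\textbf{The gap is Step~2.} It is not a technicality: it carries essentially the whole content of the theorem.

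For your final estimate you need projections $p_1,\dots,p_m$ with the following properties:
\begin{itemize}
\item they commute \emph{exactly} with $\varphi(C(\Omega))$;
\item they are mutually orthogonal and sum \emph{exactly} to $1$;
\item they satisfy the \emph{norm} condition $p_j\varphi(g)=0$ whenever $g$ vanishes near $x_j$.
\end{itemize}
In other words, $\varphi$ must be a pointwise norm limit of finite-spectrum homomorphisms $f\mapsto\sum_j f(x_j)p_j$. Nothing you invoke produces this.

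Projections obtained in $\overline{\varphi(f)F(\mathcal{W})\varphi(f)}$ from strict comparison and divisibility have no reason to commute with $\varphi$. Suppose you work instead in $B:=F(\mathcal{W})\cap\varphi(C(\Omega))'$. You have no comparison or divisibility results for $B$. Moreover, $\varphi(C(\Omega))$ is central in $B$, so you are asking for a partition of $1_B$ into projections localized over small open subsets of a possibly connected space $\Omega$. That is a real-rank-zero-type property, not a matter of counting traces.

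Trace arguments can give $\tau_{\mathcal{W},\omega}(1-\sum_{j<m}p_j)$ the right value. They cannot force $1-\sum_{j<m}p_j$ to sit spectrally near $x_m$. For instance, $\varphi$ may have a component living on a projection in $\ker(\varrho)$. That component is invisible to the measure $\mu$, but any norm diagonalization must still absorb it.

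Already for $\Omega=[0,1]$ or $\mathbb{T}$, your Step~2 asserts that self-adjoint or unitary elements of $F(\mathcal{W})$ with faithful spectral measure are norm limits of finite-spectrum ones. For $\Omega=\mathbb{T}^2$ it presupposes the vanishing of Bott-type obstructions for commuting unitaries. These statements follow from the theorem a posteriori, but they are not available as input.

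Your fallback does not repair this. Separate unitary or Murray--von Neumann equivalences of $\varphi(f_j)$ and $\psi(f_j)$ are implemented by unrelated elements, so they give no simultaneous conjugation. The induction over corners $(1-p_1)F(\mathcal{W})(1-p_1)$ presupposes the commuting projections of Step~2.

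\textbf{The paper's route.} The paper avoids projections altogether. Its proof combines \cite[Theorem 5.2]{Na2} with the argument of \cite[Theorem 5.3]{Na2}. There \cite[Theorem 5.2]{Na2} is used as a uniqueness result for homomorphisms in which the trace is the only invariant, as befits the $KK$-contractible algebra $\mathcal{W}$. It is not used as a comparison statement for projections or positive elements, which is the role you assign to it.

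The mechanism is roughly as follows.
\begin{enumerate}
\item $\varphi$ and $\psi$ induce homomorphisms $\Phi,\Psi\colon C(\Omega)\otimes\mathcal{W}\to\mathcal{W}^{\omega}$ given by $f\otimes a\mapsto\varphi(f)a$ and $f\otimes a\mapsto\psi(f)a$.
\item These have the same trace, because $\tau_{\mathcal{W},\omega}(\varphi(f)a)=\tau_{\mathcal{W},\omega}(\varphi(f))\tau_{\mathcal{W}}(a)$.
\item There is no $K$-theoretic obstruction, since $\mathcal{W}$ is $KK$-contractible. The hypothesis $\tau_{\mathcal{W},\omega}(\varphi(f))>0$ supplies the faithfulness the uniqueness theorem needs.
\item This gives approximate unitary equivalence, and reindexation makes it exact.
\item Since $\Phi(1\otimes a)=a=\Psi(1\otimes a)$, the implementing unitary commutes with $\mathcal{W}$. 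It therefore defines a unitary in $F(\mathcal{W})$ conjugating $\varphi$ onto $\psi$.
\end{enumerate}
Your Step~3 matches the final reindexing step. What is missing is this uniqueness theorem in place of Step~2.
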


For a subset $S$ of a C$^*$-algebra $A$, let $C^{*}(S)$ denote the 
C$^*$-subalgebra of $A$ generated by $S$. 
For a Lipschitz map $U$ from $[0,1]$ to $\mathcal{U}(M(A))$, we denote by 
$\mathrm{Lip}(U)$ its Lipschitz constant. 
The following theorem is a variant of \cite[Theorem 5.7]{Na2} with a simpler proof. 

\begin{thm}\label{thm:path}
Let $N$ be a normal element in $F(\mathcal{W})$ such that 
$\tau_{\mathcal{W}, \omega}(f(N))>0$ for any $f\in C(\mathrm{Sp}(N))_{+}\setminus\{0\}$. 
If $u$ is a unitary element in $F(\mathcal{W})\cap \{N\}^{\prime}$, then there exists a Lipschitz 
map $U$ from $[0,1]$ to $\mathcal{U}(F(\mathcal{W})\cap\{N\}^{\prime})$ such that
$$
U(0)=1, \quad U(1)=u, \quad \text{and} \quad \mathrm{Lip}(U)\leq 2\pi.
$$ 
\end{thm}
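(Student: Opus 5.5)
The idea is to realize $u$ as the image of a unitary under a unital homomorphism from $C(\Omega)$, and then transport a standard $2\pi$-Lipschitz path to $1$ using the uniqueness result Theorem \ref{thm:normal-classification}. Since $N$ and $u$ commute and $u$ is unitary, the C$^*$-algebra $C^*(N,u,1)\subset F(\mathcal{W})$ is commutative. Hence it is isomorphic to $C(\Omega)$ for a compact metrizable $\Omega\subset \mathrm{Sp}(N)\times\mathbb{T}$, the joint spectrum. The trace condition on $N$ alone does not guarantee that $\tau_{\mathcal{W},\omega}$ is faithful on $C(\Omega)$, so I first enlarge the picture.

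\textbf{Step 1: a faithful model with a Haar unitary.} Choose a unital homomorphism from $C(\mathbb{T})$ into $F(\mathcal{W})\cap\{N,u\}^{\prime}$ whose generator $z$ is a Haar unitary with respect to $\tau_{\mathcal{W},\omega}$. Such a $z$ exists because the relative commutant of a separable subset in $F(\mathcal{W})$ is again large; for instance, a UHF-type embedding by $\mathcal{W}\cong\mathcal{W}\otimes\mathcal{W}$ and reindexing gives a unital copy of a UHF algebra, which contains Haar unitaries. Then the trace on $C^*(N,u,z)\cong C(\Omega^{\prime})$ is a product-like measure, but faithfulness on this joint algebra is still not automatic.

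\textbf{Alternative route, which I would actually use.} Work directly at the level of unitary equivalence of normal pairs, using Theorem \ref{thm:normal-classification} for $\Omega=\mathrm{Sp}(N)\times\mathbb{T}$. Consider two unital homomorphisms $\varphi,\psi\colon C(\mathrm{Sp}(N)\times\mathbb{T})\to F(\mathcal{W})$, both sending $\iota\otimes 1\mapsto N$:
\begin{itemize}
\item $\varphi$ sends $1\otimes z\mapsto uz_1$;
\item $\psi$ sends $1\otimes z\mapsto z_1$.
\end{itemize}
Here $z_1$ is a Haar unitary in $F(\mathcal{W})\cap\{N,u\}^{\prime}$ that is chosen, by a reindexing argument, to be ``independent'' of $C^*(N,u)$, so that $\tau(f(N)g(uz_1))=\tau(f(N))\int g\,dm$. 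Independence gives $\tau\circ\varphi=\tau\circ\psi=(\tau\circ f(N))\otimes m$. This measure is faithful by the hypothesis on $N$ together with the Haar property. Theorem \ref{thm:normal-classification} then yields a unitary $w$ with $wNw^*=N$ and $wz_1w^*=uz_1$, so that $u=wz_1w^*z_1^*$ and $w\in F(\mathcal{W})\cap\{N\}^{\prime}$.

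\textbf{Step 2: building the path.} Write $z_1=e^{ih}$ with $0\le h\le 2\pi$ self-adjoint, where $h$ is a Borel function of $z_1$. To keep $h$ in the C$^*$-algebra, perturb instead: a Haar unitary generates $C(\mathbb{T})$, so I take $z_1$ inside a unital copy of $C([0,1])$ via $t\mapsto e^{2\pi it}$ on a positive contraction $a$ with diffuse trace, and set $z_1=e^{2\pi i a}$. Define
\[
U(t):=w\,e^{2\pi i t a}\,w^*\,e^{-2\pi i t a}.
\]
Then $U(0)=1$ and $U(1)=u$. Also $U(t)$ commutes with $N$, since $w$ and $a$ both do. Each factor $e^{\pm 2\pi i t a}$ is $2\pi$-Lipschitz, so this crude estimate gives $\mathrm{Lip}(U)\le 4\pi$, not $2\pi$.

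\textbf{Step 3: reaching the constant $2\pi$.} This is the main obstacle. I would fix it by arranging $u$ to be diagonalized in the same way: apply Theorem \ref{thm:normal-classification} to the pair $(N,u)$ against a model pair $(N, e^{2\pi i b})$. Here $b$ is a commuting positive contraction whose joint distribution with $N$ matches that of $(N,u)$, pulled back along $[0,1)\to\mathbb{T}$. Making the distributions match requires realizing an arbitrary measure on $\mathrm{Sp}(N)\times\mathbb{T}$ with the correct marginal; this should follow from Proposition \ref{pro:4.2}-type projection divisibility (without $\alpha$) in $F(\mathcal{W})$. Faithfulness can be restored by first replacing $u$ with $u\cdot(\text{small independent Haar factor})$ and absorbing the correction. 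Once $u=w e^{2\pi i b}w^*$ with $w,b$ commuting with $N$, the path $U(t)=we^{2\pi i t b}w^*$ has Lipschitz constant at most $2\pi$. The delicate point is the faithfulness hypothesis of Theorem \ref{thm:normal-classification} for the joint distribution, and this is where I expect the real work to lie.
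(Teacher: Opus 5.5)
Your ``alternative route'' together with Step 2 is the paper's proof. You take an independent Haar unitary $z_1=e^{ih}$ in $F(\mathcal{W})\cap\{N,u\}^{\prime}$ and apply Theorem~\ref{thm:normal-classification} to $f_1\otimes f_2\mapsto f_1(N)f_2(z_1)$ and $f_1\otimes f_2\mapsto f_1(N)f_2(uz_1)$. Your trace computation $\tau_{\mathcal{W},\omega}(f(N)(uz_1)^{\ell})=\tau_{\mathcal{W},\omega}(f(N)u^{\ell})\tau_{\mathcal{W},\omega}(z_1^{\ell})=0$ for $\ell\neq 0$ is the right one. You then use the path $U(t)=we^{ith}w^*e^{-ith}$.

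As written, however, this only gives $\mathrm{Lip}(U)\le 2\|h\|=4\pi$, because you normalized $h=2\pi a$ with $0\le a\le 1$. So the bound in the statement is not proved. The missing observation is simply to center the logarithm: take $h:=2\pi a-\pi$, so $\|h\|\le\pi$.
\begin{itemize}
\item $e^{ih}=-e^{2\pi i a}$ is still a Haar unitary, since $\tau_{\mathcal{W},\omega}((-e^{2\pi ia})^{\ell})=(-1)^{\ell}\tau_{\mathcal{W},\omega}(e^{2\pi i\ell a})=0$.
\item It generates the same C$^*$-algebra as $e^{2\pi i a}$, so independence from $C^*(\{N,u\})$ and the application of Theorem~\ref{thm:normal-classification} are unchanged.
\item Writing $U(t)=e^{itwhw^*}e^{-ith}$ and using $\|e^{itk}-e^{isk}\|\le|t-s|\,\|k\|$ for $k=whw^*$ and $k=h$ gives $\mathrm{Lip}(U)\le 2\pi$.
\end{itemize}
This is exactly what the paper does. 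It picks $H$ in the II$_1$ factor $\mathcal{M}_{\omega}(\mathcal{W})$ with $\|H\|=\pi$ and $e^{iH}$ Haar, and lifts it through the surjection $\varrho$. Independence from $C^*(\{N,u\})$ comes from \cite[Lemma 3.6]{Na2} and reindexation, giving a self-adjoint $h\in F(\mathcal{W})\cap\{N,u\}^{\prime}$ with $\|h\|=\pi$.

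Your Step 3 should be discarded, because it does not work. It applies Theorem~\ref{thm:normal-classification} to the homomorphism $C(\Omega)\to F(\mathcal{W})$ determined by the pair $(N,u)$. That requires $\tau_{\mathcal{W},\omega}$ to be faithful on $C^*(\{N,u\})$, which fails in general. It fails in particular in the case where the theorem is actually used, namely Proposition~\ref{pro:reduced-to-trace-kernel}, where $\varrho(u)=1$. There $\tau_{\mathcal{W},\omega}(f(N)g(u))=\tau_{\mathcal{W},\omega}(f(N))g(1)$. Hence $\tau_{\mathcal{W},\omega}(g(u)^*g(u))=0$ for every $g\in C(\mathbb{T})$ vanishing at $1$, and such $g(u)$ can be nonzero whenever $u\neq 1$. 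No choice of $b$ repairs this.

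Your proposed remedy of multiplying $u$ by an independent Haar factor just returns you to Steps 1--2, that is, to writing $u$ as a product of two exponentials. That is fine: the constant $2\pi$ comes from bounding each of the two exponents by $\pi$, not from expressing $u$ itself as a single $we^{2\pi i b}w^*$. Your argument neither needs nor justifies that single-exponential form.
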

\begin{proof}
Since $\mathcal{M}_{\omega}(\mathcal{W})$ is a II$_1$ factor 
(see the proof of Proposition \ref{pro:cohomology-vanishing}), there exists a self-adjoint element 
$H$ in $\mathcal{M}_{\omega}(\mathcal{W})$ such that $\| H\|=\pi$ and 
$e^{iH}$ is a Haar unitary element. By the surjectivity of $\varrho$, there exists a self-adjoint element 
$[(h_n)_n]\in F(\mathcal{W})$ such that $\varrho ([(h_n)_n])=H$. 
Using \cite[Lemma 3.6]{Na2} and taking a suitable subsequence of $(h_n)_n$ 
(or the reindexation trick), we obtain a self-adjoint element $h$ in 
$F(\mathcal{W})\cap \{N, u\}^{\prime}$ such that $\|h\|=\pi$, $e^{ih}$ is a Haar unitary element, 
and 
$$
\tau_{\mathcal{W}, \omega}(xy)= \tau_{\mathcal{W}, \omega}(x)\tau_{\mathcal{W}, \omega}(y)
\eqno{(4.4.1)}
$$
for any $x\in C^{*}(\{N, u\})$ and $y\in C^*(\{e^{ih}\})$. 
Define unital homomorphisms $\varphi$ and $\psi$ from 
$C(\mathrm{Sp}(N))\otimes C(\mathbb{T})\cong C(\mathrm{Sp}(N)\times \mathbb{T})$ 
to $F(\mathcal{W})$ by 
$$
\varphi (f_1\otimes f_2):= f_1(N)f_2(e^{ih}) \quad \text{and} \quad 
\psi (f_1\otimes f_2):= f_1(N)f_2(ue^{ih})
$$
for any $f_1\in C(\mathrm{Sp}(N))$ and $f_2\in C(\mathbb{T})$. 
Since $e^{ih}$ is a Haar unitary element, the assumption on $N$ and (4.4.1) imply that 
$
\tau_{\mathcal{W}, \omega}(\varphi(f))>0
$
for any $f\in C(\mathrm{Sp}(N)\times \mathbb{T})_{+}\setminus \{0\}$. 
Furthermore, since we have 
$$
\tau_{\mathcal{W}, \omega}(N^mN^{*k}(ue^{ih})^\ell)=\tau_{\mathcal{W}, \omega}(N^m
N^{*k}u^{\ell}e^{i\ell h})
= \tau_{\mathcal{W}, \omega}(N^mN^{*k}u^{\ell})\tau_{\mathcal{W}, \omega}(e^{i\ell h})=0
$$
for any $m, k\in\mathbb{N}\cup \{0\}$ and $\ell\in \mathbb{N}$, 
$$
\tau_{\mathcal{W}, \omega}(N^mN^{*k}(ue^{ih})^\ell)=\tau_{\mathcal{W}, \omega}(N^mN^{*k}(e^{ih})^\ell)
$$
for any $m, k, \ell\in \mathbb{N}\cup \{0\}$. Hence we have 
$\tau_{\mathcal{W}, \omega}(\varphi(f))=
\tau_{\mathcal{W}, \omega}(\psi(f))$ for any $f\in C(\mathrm{Sp}(N)\times \mathbb{T})$. 
By Theorem \ref{thm:normal-classification}, there exists a unitary element $v$ in $F(\mathcal{W})$ 
such that $\varphi (f)= v\psi(f)v^*$ for any $f\in C(\mathrm{Sp}(N)\times \mathbb{T})$. 
In particular, we have 
$$
vNv^*=N\quad \text{and} \quad vue^{ih}v^*=e^{ih}. \eqno{(4.4.2)}
$$
Define a map $U$ from $[0,1]$ to $\mathcal{U}(F(\mathcal{W}))$ by 
$U(t):= e^{itv^*hv}e^{-ith}$ for any $t\in [0,1]$. 
By (4.4.2), we have $vN=Nv$ and 
$u=e^{iv^*hv}e^{-ih}$. Since $h$ is a self-adjoint element in 
$F(\mathcal{W})\cap \{N\}^{\prime}$ such that $\|h\|=\pi$, 
it is easy to see that $U$ is the desired Lipschitz map. 
\end{proof}
 
The following proposition is related to \cite[Proposition 2.8]{Na8} (see also the proof of 
\cite[Theorem 4.6]{Na8}). 

\begin{pro}\label{pro:reduced-to-trace-kernel}
Let $\alpha$ be an aperiodic UHF-absorbing automorphism of $\mathcal{W}$. 
If $u$ is a unitary element in $F(\mathcal{W})$ such that $\varrho (u)=1$, then there exist a 
projection $p$ in $\ker(\varrho)\cap\{u\}^{\prime}$ and a Lipschitz map $U$ from $[0,1]$ to 
$\mathcal{U}(F(\mathcal{W})\cap\{ p\}^{\prime})$ such that 
$$
\alpha(p)=p, \quad  u=1-p+up, \quad  U(0)=1, \quad  
U(1)=u, \quad \text{and}  \quad \mathrm{Lip}(U)\leq 2\pi.
$$ 
\end{pro}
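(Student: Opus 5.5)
We need a projection $p$ in $\ker(\varrho)$ that is $\alpha$-invariant, commutes with $u$, and "supports" $u$, meaning $u=1-p+up$. Once we have it, Theorem \ref{thm:path} applied to $N:=p$ gives the path. The plan is to build $p$ from a spectral cutoff of $u$ and then enlarge it to an $\alpha$-invariant projection inside the trace kernel, using the $\alpha$-invariant matrix units and projections of Proposition \ref{pro:4.2}.

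First I produce a projection $p_0\in\ker(\varrho)\cap\{u\}'$ with $u=1-p_0+up_0$. Since $\varrho(u)=1$, we have $\tau_{\mathcal{W},\omega}(|u-1|)=0$. Pick continuous functions $g_n$ on $\mathbb{T}$ with $g_n=0$ near $1$ and $g_n=1$ off a $1/n$-neighborhood of $1$. Then $\tau_{\mathcal{W},\omega}(g_n(u))=0$, and in fact $\tau_{\mathcal{W},\omega}(f(u))=0$ for every continuous $f$ vanishing at $1$. Using the standard Kirchberg $\sigma$-ideal / reindexation trick in $F(\mathcal{W})$ (as in \cite[Section 4]{Na2}), I expect to find a projection $p_0\in F(\mathcal{W})\cap\{u\}'$ such that $p_0 f(u)=f(u)$ for all such $f$ and $\tau_{\mathcal{W},\omega}(p_0)=0$. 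The projection can be taken as a "support" of $u-1$ that lies in the trace kernel, obtained from a diagonal sequence approximating $\lim g_n(u)$ with traces tending to $0$. The relation $p_0f(u)=f(u)$ for all $f$ with $f(1)=0$ gives $(1-p_0)u=1-p_0$, that is, $u=1-p_0+up_0$.

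Next I make the projection $\alpha$-invariant. Since $\alpha$ is aperiodic and UHF-absorbing, Proposition \ref{pro:4.1} says $F(\mathcal{W})^{\alpha}$ has strict comparison, in the strong form stated there. The positive element $a:=\sum_{|k|\le K}\alpha^k(p_0)$, suitably normalized, has $d_\tau$ zero. By Proposition \ref{pro:4.2}(3) there are $\alpha$-fixed projections of arbitrarily small (even zero) trace. Combining Theorem \ref{thm:small-rohlin} with a reindexation argument, I aim for an $\alpha$-fixed projection $p\in\ker(\varrho)$ commuting with $u$ such that $p\ge p_0$. Concretely, the route I would try is this:
\begin{enumerate}
\item Using reindexation, obtain $\alpha$-invariance of the hereditary subalgebra generated by the orbit $\{\alpha^k(p_0)\}_{k\in\mathbb{Z}}$.
\item Apply Kirchberg's $\varepsilon$-test to produce a projection $p$ in $F(\mathcal{W})^{\alpha}\cap\{u\}'\cap\ker(\varrho)$ with $pp_0=p_0$.
\end{enumerate}
Then $u=1-p+up$ still holds, because $up=u-(1-p)$ follows from $(1-p)(1-p_0)=1-p$, which gives $(1-p)u=1-p$.

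Finally, $\tau_{\mathcal{W},\omega}(f(p))=0$ fails for $f=1_{\{1\}}$, so Theorem \ref{thm:path} cannot be applied to $N=p$ directly. Instead I apply it to $N:=p+\tfrac12 e$, or more simply to a normal element $N=p+\tfrac{1}{2}(1-p)\cdot 0$ adjusted so that both spectral parts carry positive trace. For this I use a Haar-type element: choose a self-adjoint $k$ in $F(\mathcal{W})\cap\{p,u\}'$ with full spectrum $[2,3]$ and positive trace on every open set, and set $N:=p+k$. Since $\tau_{\mathcal{W},\omega}(p)=0$, I need $pk\neq0$ with positive trace on $p$'s corner, which is impossible. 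I therefore reconsider and instead apply Theorem \ref{thm:path} to $N:=k(1-p)+3p$ only if $p$ has positive trace. As this fails, the remedy I would try first is to use \cite[Theorem 5.7]{Na2}-type path results in the corner $pF(\mathcal{W})p$, which is again of $\mathcal{W}$-type by \cite{Na5}. There I connect $up$ to $p$ with Lipschitz constant $\le2\pi$, and then set $U(t):=1-p+U_p(t)$. The main obstacle is twofold: securing $\alpha$-invariance of $p$ in the second step, and justifying the path in the trace-zero corner in this last step.
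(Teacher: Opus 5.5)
Your proposal has a genuine gap, and it sits where you flagged it. Theorem~\ref{thm:path} requires $\tau_{\mathcal{W},\omega}(f(N))>0$ for every nonzero positive $f$ on $\mathrm{Sp}(N)$, so it can never be applied to a projection of trace zero. Your fallback is to find a path from $p$ to $up$ inside $pF(\mathcal{W})p$, on the grounds that this corner is ``of $\mathcal{W}$-type''. Nothing supports that. The trace $\tau_{\mathcal{W},\omega}$ vanishes on $pF(\mathcal{W})p$, and the path theorem rests on the trace-based uniqueness result Theorem~\ref{thm:normal-classification}, which says nothing about such a corner. You also ask for more than the statement needs: $U(t)$ only has to commute with $p$, not have the form $1-p+U_p(t)$. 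Your second step has the same problem. Saying ``apply Kirchberg's $\varepsilon$-test to produce $p\in F(\mathcal{W})^{\alpha}\cap\{u\}'\cap\ker(\varrho)$ with $p\geq p_0$'' restates the goal. The $\varepsilon$-test needs approximate solutions, and you never construct any.

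The missing idea is to work at small but \emph{positive} trace and only pass to trace zero at the very end.
\begin{itemize}
\item Fix $0<\varepsilon<1$. Apply Kasparov's lemma \cite[Lemma 1.4]{Kas} to $C^*(\{\alpha^n(u-1)\; |\; n\in\mathbb{Z}\})$ together with a diagonal argument. This gives a positive contraction $h\in\ker(\varrho)$ with $\alpha(h)=h$ and $h(u-1)=(u-1)h=u-1$.
\item Since $\tau_{\mathcal{W},\omega}(h)=0$, we have $d_{\tau_{\mathcal{W},\omega}}(1-h)=1$. Proposition~\ref{pro:4.2}(3) then gives a projection $q_\varepsilon\in\overline{(1-h)F(\mathcal{W})^{\alpha}(1-h)}$ with $\tau_{\mathcal{W},\omega}(q_\varepsilon)=1-\varepsilon$.
\item Put $p_\varepsilon:=1-q_\varepsilon$. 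It is $\alpha$-fixed, commutes with $u$, satisfies $u=1-p_\varepsilon+up_\varepsilon$ because $q_\varepsilon(u-1)=(u-1)q_\varepsilon=0$, and has trace $\varepsilon\in(0,1)$.
\item Because $0<\varepsilon<1$, Theorem~\ref{thm:path} now applies to $N:=p_\varepsilon$. It gives a path $U_\varepsilon$ in $\mathcal{U}(F(\mathcal{W})\cap\{p_\varepsilon\}')$ from $1$ to $u$ with $\mathrm{Lip}(U_\varepsilon)\leq 2\pi$.
\item All the conditions survive the diagonal (reindexation) argument as $\varepsilon\to 0$. The Lipschitz bound is uniform in $\varepsilon$, so it can be imposed on rational $t$ and extended by continuity. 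The limit projection $p$ has trace zero, hence lies in $\ker(\varrho)$.
\end{itemize}
This route also removes your other obstacle. $\alpha$-invariance is built in from the start, because $h$ and $q_\varepsilon$ are chosen in the fixed point algebra, instead of trying to enlarge a non-invariant $p_0$ afterwards.
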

\begin{proof}
By the diagonal argument,  it is enough to show that for any $0<\varepsilon<1$, 
there exist a projection $p_{\varepsilon}$ in $F(\mathcal{W})\cap\{u\}^{\prime}$ and a 
Lipschitz map $U_{\varepsilon}$ from $[0,1]$ to $\mathcal{U}(F(\mathcal{W})
\cap\{ p_{\varepsilon}\}^{\prime})$ 
such that 
$$
\alpha(p_{\varepsilon})=p_{\varepsilon}, \quad  u=1-p_{\varepsilon}
+up_{\varepsilon}, \quad  U_{\varepsilon}(0)=1, \quad  
U_{\varepsilon}(1)=u, \quad \mathrm{Lip}(U_{\varepsilon})\leq 2\pi
$$ 
and 
$$
\tau_{\mathcal{W}, \omega}(p_{\varepsilon})\leq \varepsilon. 
$$ 
Let $0<\varepsilon<1$. 
Using \cite[Lemma 1.4]{Kas} for $C^*(\{\alpha^n(u-1)\; |\; n\in\mathbb{Z}\})$ and the 
diagonal argument, we obtain a positive contraction $h$ in $\ker(\varrho)$ such that 
$\alpha(h)=h$ and $h(u-1)=(u-1)h=u-1$. 
Since we have $d_{\tau_{\mathcal{W}, \omega}}(1-h)=1$, 
Proposition \ref{pro:4.2}(3) implies that there exists a projection $q_{\varepsilon}$ in 
$\overline{(1-h)F(\mathcal{W})^{\alpha}(1-h)}$ such that 
$\tau_{\mathcal{W}, \omega}(q_{\varepsilon})=1-\varepsilon$. 
 Since we have $(1-h)(u-1)=(u-1)(1-h)=0$, 
$q_{\varepsilon}(u-1)=(u-1)q_{\varepsilon}=0$. 
Put $p_{\varepsilon}:= 1-q_{\varepsilon}$. Then we have $u=1-p_{\varepsilon}+up_{\varepsilon}
=1-p_{\varepsilon}+p_{\varepsilon}u$ and $\tau_{\mathcal{W}, \omega}(p_{\varepsilon})=\varepsilon$. 
Therefore we see that $p_{\varepsilon}$ is the desired projection. 
Since $0<\tau_{\mathcal{W}, \omega}(p_{\varepsilon})<1$, applying Theorem \ref{thm:path} 
to $N:=p_{\varepsilon}$, we obtain $U_{\varepsilon}$. 
Hence the proof is complete. 
\end{proof}

For the proof of the weak stability of aperiodic UHF-absorbing automorphisms of $\mathcal{W}$ 
(Theorem \ref{thm:weak-stability}), we use the following lemma instead of Herman--Ocneanu's 
argument (see the proof of \cite[Theorem 1]{HO}). The proof of the following lemma is based on 
a direct proof of the fact that if $S$ is the bilateral shift operator on 
$\ell^2(\mathbb{Z})$ and $\lambda\in\mathbb{T}$, then $\lambda\oplus S$ is approximately 
unitarily equivalent to $1\oplus S$. 
(Note that this fact itself is a corollary of the Weyl--von Neumann--Berg theorem.)

\begin{lem}\label{lem:Opus}
Let $A$ be a unital C$^*$-algebra, let $\alpha$ be an automorphism of $A$, and fix 
$M, N\in\mathbb{N}$ and $\ell>0$. 
Suppose that $p$ is a projection in $A^{\alpha}$ and 
$w$ is a unitary element in $pAp$. 
Furthermore, assume that there exist a finite sequence $w_0=p, w_1, \ldots, w_M=w$ in 
$\mathcal{U}(pAp)$, a projection $q$ in $A$, and an element $s$ in $A$ such that 
$$
\| w_{j+1}-w_{j} \| \leq\frac{\ell}{M}
$$
for any $0\leq j\leq M-1$, 
$$
q\alpha^{k} (q)=0
$$
for any $1\leq k  \leq MN+1$, and 
$$
qp=0, \quad s^*s=p, \quad \text{and} \quad ss^*=q.
$$
Then there exists a unitary element $v$ in $A$ such that 
$$
\| v\alpha (v^*) - (1-p+ w)\| \leq \frac{\ell}{M}+\frac{8M}{\sqrt{N}}.
$$ 
\end{lem}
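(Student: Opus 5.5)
The plan is to imitate the direct proof that $\lambda\oplus S$ is approximately unitarily equivalent to $1\oplus S$. The projection $p$ plays the role of the one-dimensional summand, and an $\alpha$-orbit tower built from $q$ plays the role of the bilateral shift. Put $L:=MN$, $q_k:=\alpha^k(q)$ and $s_k:=\alpha^k(s)$ for $0\le k\le L+1$.

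First I record the tower structure. From $q\alpha^{k}(q)=0$ for $1\le k\le L+1$, the projections $q_0,\dots,q_{L+1}$ are mutually orthogonal, since $q_iq_j=\alpha^i(q\alpha^{j-i}(q))=0$. Because $\alpha(p)=p$ and $qp=0$, each $q_k$ is also orthogonal to $p$. The elements $s_k$ are partial isometries with $s_k^*s_k=p$ and $s_ks_k^*=q_k$, and $\alpha(s_k)=s_{k+1}$. Hence $P:=p+\sum_{k=0}^{L+1}q_k$ is a projection. On $PAP$ we have matrix units between $p$ and the levels $q_k$, and $\alpha$ shifts the levels by one.

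Next I spread the path $w_0=p,\dots,w_M=w$ along the tower. Split the levels $1,\dots,L$ into $M$ consecutive blocks of length $N$, and on level $k$ of block $j$ place the unitary $y_k:=s\,w_j\,s^*$, transported by $\alpha^k$ to $\alpha^k(sw_js^*)=s_k\,\alpha^k(w_j)\,s_k^*$. The bookkeeping must be arranged so that $\alpha$ acts only by shifting levels. The point is that in $v\alpha(v^*)$, level $k+1$ receives $\alpha^{k+1}(s\,y_{k+1}y_k^*\,s^*)$. This is exactly $q_{k+1}$ inside a block. At the $M$ block boundaries it is a conjugate of $w_{j+1}w_j^*$, whose distance from $p$ is at most $\ell/M$. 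These defects sit on mutually orthogonal corners, so their total contribution to the norm is at most $\ell/M$. This accounts for the first term of the estimate.

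The main obstacle is closing the loop. The $p$-corner of $v\alpha(v^*)$ must equal $w$, and the bottom and top levels must return to the identity. If the $p$-part of $v$ were diagonal, say $a$, we would need $a\alpha(a^*)=w$, which is the original problem. So $v$ must have off-diagonal entries mixing $p$ with the levels. For the transfer of $w$ into the tower I would try a slowly rotating family of $2\times 2$ unitaries in the matrix units $p,s_k$, namely
\[
\cos\theta_k\,p+\sin\theta_k\,(s_k-s_k^*)+\cos\theta_k\,q_k ,
\]
with angles $\theta_k$ changing by about $1/\sqrt N$ per level over a stretch of roughly $\sqrt N$ levels inside each block, so that one rotation occurs per block. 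Each rotation should cost at most $8/\sqrt N$ in norm, the constant coming from the difference of consecutive rotations plus the commutation of $w_j$ past the rotation. The bottom level $q_0$ and the top level $q_{L+1}$ are handled by making the factors there trivial, which is where the hypothesis up to exponent $MN+1$ is used. Summing the $M$ rotation errors with the boundary error gives $\|v\alpha(v^*)-(1-p+w)\|\le \ell/M+8M/\sqrt N$. Outside $P$, take $v=1-P$.

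I expect the delicate part to be checking that the rotations interact correctly with $\alpha$. Since $\alpha(p)=p$ but $\alpha$ need not fix $pAp$ pointwise, every element of $pAp$ appearing in $v$ must be transported by $s_k$ and $\alpha^k$ consistently, so that $\alpha$ acts purely as the level shift.
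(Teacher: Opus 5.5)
\textbf{There is a genuine gap: the proposal never defines $v$.} Your tower bookkeeping is correct. The paper packages it as a homomorphism $\Phi$ from $pAp\otimes B(\ell^2(\{0,\dots,MN+1\}))$ to $A$, with $p$ as level $0$, $\alpha^m(s)$ giving level $m\ge 1$, and $\alpha$ acting on matrix units by the shift $0\mapsto 0$, $m\mapsto m+1$. The two ingredients you describe, however, do not produce a $v$.

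Your block-constant diagonal unitary $d$ (entry $\alpha^k(w_j)$ on level $k$ of block $j$) satisfies $dp=pd=p$ and $d\alpha(d^*)\approx 1$ up to the $\ell/M$ defects. So composing with it either leaves $v\alpha(v^*)$ essentially unchanged or conjugates it by $d$, which fixes $1-p+w$. It cannot help put $w$ on the $p$-corner. Moreover, unless $d$ comes back to $p$ along the tower, it leaves a defect of size $\|w-p\|$ at the top.

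Everything therefore rests on the ``rotations'', which you only sketch (``should cost at most $8/\sqrt N$''). As described they cannot work. They have real coefficients in the matrix units $p,q_k,s_k$. Take the model case $A=B(\mathbb{C}\oplus\ell^2(\mathbb{Z}))$, $\alpha=\mathrm{Ad}(1\oplus S)$, $p$ the projection onto $\mathbb{C}$, $w=\lambda p$. There every product of such rotations is a real orthogonal matrix in the standard basis, hence so is $v\alpha(v^*)$. Its $(p,p)$-entry is then real and cannot approximate $\lambda=i$.

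More fundamentally, $\alpha$ moves level $k$ to the orthogonal level $k+1$. Whatever part of $p$ your $v$ sends to a few levels is moved off itself. Since all your rotations act on the same $p$-corner, their errors accumulate there rather than on orthogonal corners. In any case, a stretch of $\sqrt N$ levels would only give errors of order $N^{-1/4}$.

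\textbf{The missing idea} is an approximate eigenvector of the level shift with ``eigenvalue'' $w_j$. The paper takes, for $1\le j\le M$,
\[
V_j=\frac{1}{\sqrt N}\Bigl(p\otimes e_{(j-1)N+1,0}+\sum_{k=1}^{N-1}\bigl(w_j\alpha(w_j)\cdots\alpha^{k-1}(w_j)\bigr)^*\otimes e_{(j-1)N+k+1,0}\Bigr).
\]
This is a column spread with amplitude $1/\sqrt N$ over a whole block of $N$ levels, whose entries wind by the twisted cocycle of $w_j$. Entries merely transported by $\alpha^k$, like yours, give eigenvalue $p$ instead.

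With this choice:
\begin{itemize}
\item $V_j^*V_j=p\otimes e_{0,0}$, and the ranges of $V_0:=p\otimes e_{0,0},V_1,\dots,V_M$ are mutually orthogonal.
\item $\|\alpha(\Phi(V_j))-\Phi(V_j(w_j\otimes e_{0,0}))\|\le 2/\sqrt N$, the error coming only from the two ends of the block.
\end{itemize}
Now take the cyclic permutation $Z=\sum_{j=0}^{M-1}V_{j+1}V_j^*+V_0V_M^*$ and $v=\Phi(Z)+1-\Phi(Z^*Z)$. Then $v\alpha(v^*)$ carries the eigenvalue $w_M=w$ of $V_M$ back to the $p$-corner. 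On the mutually orthogonal ranges of $V_{j+1}$ it has $w_jw_{j+1}^*$, which gives the $\ell/M$ term. The endpoint errors of the $M$ pieces of $Z$ and of $Z^*Z$ give $4M/\sqrt N+4M/\sqrt N$.

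Without this construction, or an equivalent one, the final estimate in your proposal is unsupported.
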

\begin{proof}
Define a map $S$ from $\mathbb{N}\cup \{0\}$ to $\mathbb{N}\cup \{0\}$ by 
$S(0)=0$ and $S(n)=n+1$ for any $n\in\mathbb{N}$. 
Let $I:=\{0,1,\ldots,MN+1\}$, and let $\{e_{m, n}\}_{m, n=0}^{MN+1}$ be the standard matrix units of 
$B(\ell^2(I))$. Define a homomorphism $\Phi$ from $pAp\otimes B(\ell^2(I))$ to $A$ 
by
\begin{align*}
&\Phi \left(\sum_{m, n=0}^{MN+1}x_{m, n}\otimes e_{m, n}\right) \\
&:= x_{0, 0}+\sum_{n=1}^{MN+1}x_{0, n}\alpha^{n}(s^*)
+\sum_{m=1}^{MN+1}\alpha^{m}(s)x_{m, 0}+\sum_{m,n=1}^{MN+1}
\alpha^{m}(s)x_{m, n}\alpha^n(s^*). 
\end{align*}
It is easy to see that $\Phi$ is a well-defined homomorphism. Furthermore, we have 
$$
\alpha\left(\Phi \left(\sum_{m, n=0}^{MN}x_{m, n}\otimes e_{m, n}\right)\right) 
=\Phi \left(\sum_{m, n=0}^{MN}\alpha(x_{m, n})\otimes e_{S(m), S(n)}\right). \eqno{(4.6.1)}
$$
Put $V_0:=p\otimes e_{0, 0}$ and 
$$
V_{j}:= \frac{1}{\sqrt{N}}\left(p\otimes e_{(j-1)N+1, 0}+ \sum_{k=1}^{N-1}(w_{j}\alpha(w_{j})\cdots 
\alpha^{k-1}(w_{j}))^*\otimes e_{(j-1)N+k+1, 0}\right)
$$
for any $1\leq j \leq M$. Then $V_j^*V_j=p\otimes e_{0,0}$ for any $0\leq j \leq M$ and 
$V_0V_0^*$, $V_1V_1^*$, \ldots, $V_{M}V_{M}^*$ are mutually orthogonal projections. 
By (4.6.1), we have 
$$
\alpha (\Phi(V_0))= \Phi(V_0)=\Phi(V_0(w_0\otimes e_{0,0})) \eqno{(4.6.2)}
$$ 
and 
\begin{align*}
&\| \alpha (\Phi(V_j))- \Phi(V_j(w_j\otimes e_{0,0}))\| \\
&=\frac{1}{\sqrt{N}}\| 
\Phi((\alpha(w_{j})\alpha^2(w_{j})\cdots 
\alpha^{N-1}(w_{j}))^*\otimes e_{jN+1, 0})-\Phi(w_{j}\otimes e_{(j-1)N+1, 0})\| \\
&\leq \frac{2}{\sqrt{N}} \tag*{(4.6.3)}
\end{align*}
for any $1\leq j \leq M$. Put
$$
Z:= \sum_{j=0}^{M-1}V_{j+1}V_{j}^*+ V_0V_{M}^*.
$$
Then $Z^*Z=ZZ^*=\sum_{j=0}^{M}V_{j}V_{j}^*$. 
Since we have 
$$
\| \alpha (\Phi(V_{j+1}V_{j}^*)) - \Phi(V_{j+1}(w_{j+1}w_{j}^*\otimes e_{0, 0})V_{j}^*)\|  
\leq \frac{4}{\sqrt{N}}
$$
for any $1\leq j\leq M-1$, 
$$
\|\alpha (\Phi(V_1V_0^*))-   \Phi(V_{1}(w_{1}w_{0}^*\otimes e_{0, 0})V_{0}^*)\|\leq \frac{2}{\sqrt{N}},
$$
and 
$$ 
\|\alpha (\Phi(V_0V_M^*))-   \Phi(V_0(w^*\otimes e_{0, 0})V_{M}^*)\|\leq \frac{2}{\sqrt{N}}
$$
by (4.6.2) and (4.6.3),
$$
\left\| \alpha (\Phi (Z))-\sum_{j=0}^{M-1}\Phi(V_{j+1}(w_{j+1}w_{j}^*\otimes e_{0, 0})V_{j}^*)-
\Phi(V_0(w^*\otimes e_{0, 0})V_{M}^*) \right\|
\leq \frac{4M}{\sqrt{N}}. \eqno{(4.6.4)}
$$
Similarly, we have 
$$
\| \alpha (\Phi(Z^*Z))- \Phi(Z^*Z) \| \leq \frac{4M}{\sqrt{N}}. \eqno{(4.6.5)} 
$$
Put
$$
v:= \Phi (Z)+ 1- \Phi(Z^*Z).
$$
Then $v$ is a unitary element in $A$. Since we have 
$\| w_{j}w_{j+1}^*-p\| \leq \ell /M$ for any $0\leq j \leq M-1$ by the assumption and
$V_0V_0^*$, $V_1V_1^*$, \ldots, $V_{M}V_{M}^*$ are mutually orthogonal projections, 
$$
\left\|\sum_{j=0}^{M-1}V_{j+1}(w_{j}w_{j+1}^*\otimes e_{0, 0})V_{j+1}^*-   \sum_{j=0}^{M-1}V_{j+1}V_{j+1}^*
\right\|
\leq \frac{\ell}{M}. 
\eqno{(4.6.6)}
$$
Hence we have 
\begin{align*}
&\| v\alpha (v^*)- (1-p+ w)\| \\
&= \| v\alpha(\Phi (Z^*)+ 1- \Phi(Z^*Z))-(1-p+w)\| \\
(4.6.5)\; 
&\leq \|  v
(\alpha(\Phi (Z^*)) +1-\Phi(Z^*Z))-(1-p+w)\|+ \frac{4M}{\sqrt{N}} \\
&= \| (\Phi(Z)+1-\Phi(Z^*Z))\alpha(\Phi(Z^*))- (\Phi(Z^*Z)-p+w)\| + \frac{4M}{\sqrt{N}} \\
(4.6.4)\; 
&\leq \left\| \Phi\left(\sum_{j=0}^{M-1}V_{j+1}(w_{j}w_{j+1}^*\otimes e_{0, 0})V_{j+1}^*\right)+
w- (\Phi(Z^*Z)-p+w)\right\| + \frac{8M}{\sqrt{N}} \\
(4.6.6)\; 
&\leq \left\|\Phi\left(\sum_{j=0}^{M-1}V_{j+1}V_{j+1}^*\right)+p- 
\Phi(Z^*Z)\right\| +\frac{\ell}{M}+ \frac{8M}{\sqrt{N}} =\frac{\ell}{M}+ \frac{8M}{\sqrt{N}}.  
\end{align*}
The proof is complete.
\end{proof}

The following theorem is one of the main theorems in this section. 
\begin{thm}\label{thm:weak-stability}
Let $\alpha$ be an aperiodic UHF-absorbing automorphism of $\mathcal{W}$, and let $u$ be 
a unitary element in $F(\mathcal{W})$ such that $\varrho (u)=1$. Then there exists a unitary element 
$v$ in $F(\mathcal{W})$ such that $u=v\alpha(v^*)$. 
In particular, every aperiodic UHF-absorbing automorphism of $\mathcal{W}$ is weakly stable. 
\end{thm}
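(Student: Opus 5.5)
The plan is to combine Proposition \ref{pro:reduced-to-trace-kernel}, Theorem \ref{thm:small-rohlin}, and Lemma \ref{lem:Opus}, and then finish with a diagonal argument in $F(\mathcal{W})$.

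\textbf{Step 1: localize $u$ to a trace-zero corner.} Let $u$ be a unitary in $F(\mathcal{W})$ with $\varrho(u)=1$. Proposition \ref{pro:reduced-to-trace-kernel} gives a projection $p\in\ker(\varrho)\cap\{u\}'$ with $\alpha(p)=p$ and $u=1-p+up$. It also gives a $2\pi$-Lipschitz path $U$ in $\mathcal{U}(F(\mathcal{W})\cap\{p\}')$ from $1$ to $u$. Compressing by $p$, the path $t\mapsto U(t)p$ is a $2\pi$-Lipschitz path in $\mathcal{U}(pF(\mathcal{W})p)$ from $p$ to $w:=up$. For fixed $M$, put $w_j:=U(j/M)p$. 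Then $\|w_{j+1}-w_j\|\le 2\pi/M$, so the hypotheses of Lemma \ref{lem:Opus} on the chain $w_0,\dots,w_M$ hold with $\ell=2\pi$.

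\textbf{Step 2: find the small Rohlin tower.} Since $p\in\ker(\varrho)$ is a projection and $\mathcal{W}$ is simple, separable, non-type I, nuclear, monotracial and has strict comparison (being $\mathcal{Z}$-stable), Theorem \ref{thm:small-rohlin} applies. It gives a projection $q$ and an element $s$ in $F(\mathcal{W})$ with $q\alpha^k(q)=0$ for all $k\neq0$, $pq=0$, $s^*s=p$ and $ss^*=q$. In particular $q\alpha^k(q)=0$ for $1\le k\le MN+1$, whatever $N$ is.

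\textbf{Step 3: apply Lemma \ref{lem:Opus} and pass to the limit.} Take $A=F(\mathcal{W})$, which is unital. Lemma \ref{lem:Opus} gives a unitary $v_{M,N}\in F(\mathcal{W})$ with
$$
\|v_{M,N}\alpha(v_{M,N}^*)-u\|\le \frac{2\pi}{M}+\frac{8M}{\sqrt N},
$$
using $1-p+w=u$. Choose $M$ large and then $N\gg M^2$, so the error is as small as we like. An approximate solution is not yet enough: we need exact equality $u=v\alpha(v^*)$. To get it, I will run the usual reindexation/diagonal argument. Each $v_{M,N}$ is represented by a sequence of unitaries in $\mathcal{W}^\sim$, via \cite[Proposition 4.9]{Na2}, since $\mathcal{W}\subset\overline{\mathrm{GL}(\mathcal{W}^\sim)}$ holds by stable rank one. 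Using Kirchberg's $\varepsilon$-test, we choose a representative in which the error tends to zero along $\omega$ and which asymptotically commutes with a dense sequence of $\mathcal{W}$. This gives $v\in F(\mathcal{W})$ with $v\alpha(v^*)=u$ exactly.

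\textbf{Step 4: the conclusion.} Weak stability follows with $\lambda=1$.

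\textbf{Main obstacle.} The delicate point is Step 3. The diagonal argument must be carried out inside $F(\mathcal{W})$, which is a quotient of a relative commutant and not an ultrapower in its own right. One has to check that the conditions defining membership in $F(\mathcal{W})$, together with the approximate identity, are countably many $\omega$-approximate conditions, so that the $\varepsilon$-test applies. A second point to verify is that the elements $w_j$ indeed lie in $pF(\mathcal{W})p$, which uses that $U(t)$ commutes with $p$. It is also worth noting that Lemma \ref{lem:Opus} does not require $w_j$ to commute with $\alpha$. This matters because it is why a single fixed projection $p$ with $\alpha(p)=p$ suffices.
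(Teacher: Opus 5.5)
Your proposal is correct and follows the paper's proof essentially verbatim. You use Proposition \ref{pro:reduced-to-trace-kernel} to get $p$ and the path $U$, Theorem \ref{thm:small-rohlin} to get $q$ and $s$, and Lemma \ref{lem:Opus} with $w_j:=U(j/M)p$, $w:=up$, $\ell:=2\pi$ to make $\|u-v\alpha(v^*)\|$ arbitrarily small, then finish with the diagonal argument; the only difference is that you spell out that last step, which the paper leaves as ``by the diagonal argument'', and the unitary representatives you invoke there are a convenience rather than a necessity.
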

\begin{proof}
By the diagonal argument, it is enough to show that for any $\varepsilon>0$, 
there exists a unitary element $v_{\varepsilon}$ in $F(\mathcal{W})$ such that 
$\| u- v_{\varepsilon}\alpha(v_\varepsilon^*)\|< \varepsilon$. 
Let $\varepsilon >0$. Choose a natural number $M$ satisfying $2\pi/M<\varepsilon/2$, and 
choose a natural number $N$ satisfying $8M/\sqrt{N}<\varepsilon/2$. 
By Proposition \ref{pro:reduced-to-trace-kernel}, there exist a 
projection $p$ in $\ker(\varrho)\cap\{u\}^{\prime}$ and a Lipschitz map $U$ from $[0,1]$ to 
$\mathcal{U}(F(\mathcal{W})\cap\{ p\}^{\prime})$ such that 
$$
\alpha(p)=p, \quad  u=1-p+up, \quad  U(0)=1, \quad  
U(1)=u, \quad \text{and}  \quad \mathrm{Lip}(U)\leq 2\pi.
$$ 
Theorem \ref{thm:small-rohlin} implies that there exist 
a projection $q$ and an element $s$ in 
$F(\mathcal{W})$ such that 
$$
q\alpha^{k}(q)=0, \quad pq=0, \quad s^*s=p, \quad \text{and}\quad ss^*=q
$$
for any $k\in \mathbb{Z}\setminus\{0\}$. For any $0\leq j \leq M$, put $w_{j}:=U(j/M)p$. 
Applying Lemma~\ref{lem:Opus} to $A:=F(\mathcal{W})$, $w:=up$, and $\ell:=2\pi$, 
we obtain the desired unitary element $v_{\varepsilon}$. 
\end{proof}

Let $\mathcal{Q}$ be the universal UHF algebra, that is, the UHF algebra with 
$K_0(\mathcal{Q})\cong \mathbb{Q}$. Since $\mathcal{W}$ is $\mathcal{Q}$-stable and every 
automorphism of $\mathcal{W}$ is approximately inner, the following corollary is an 
immediate consequence of Proposition \ref{pro:d-absorption} and Theorem \ref{thm:weak-stability}. 

\begin{cor}\label{cor:Q-absorbing}
If $\alpha$ is an aperiodic UHF-absorbing automorphism of $\mathcal{W}$, then 
$\alpha$ is $\mathcal{Q}$-absorbing. 
\end{cor}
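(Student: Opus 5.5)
The corollary is obtained by applying Proposition~\ref{pro:d-absorption} with $A:=\mathcal{W}$ and $\mathcal{D}:=\mathcal{Q}$, so the plan is just to check its hypotheses one by one.

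\emph{Hypotheses on $A$ and $\mathcal{D}$.} The universal UHF algebra $\mathcal{Q}$ is a stably finite strongly self-absorbing C$^*$-algebra. The algebra $\mathcal{W}$ is simple, separable, nuclear and monotracial. It is also $\mathcal{Q}$-stable: $\mathcal{W}\otimes\mathcal{Q}$ is a simple separable nuclear monotracial C$^*$-algebra, so the classification fact recalled in the introduction gives $\mathcal{W}\otimes\mathcal{Q}\cong(\mathcal{W}\otimes\mathcal{Q})\otimes\mathcal{W}$. Since $\mathcal{Q}\otimes\mathcal{W}\cong\mathcal{W}$, the right-hand side is isomorphic to $\mathcal{W}\otimes\mathcal{W}\cong\mathcal{W}$.

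\emph{Weak stability of $\alpha$.} Since $\alpha$ is aperiodic and UHF-absorbing, Theorem~\ref{thm:weak-stability} gives directly that $\alpha$ is weakly stable.

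\emph{Approximate innerness of $\alpha$.} Every automorphism of $\mathcal{W}$ is approximately inner. The reason is that $\mathcal{W}$ is $KK$-equivalent to $\{0\}$ and has a unique trace, so any automorphism agrees with $\mathrm{id}_{\mathcal{W}}$ on the classifying invariant. The uniqueness part of the classification of $\mathcal{W}$ (the references cited in the introduction) then yields approximate unitary equivalence, with unitaries taken in $M(\mathcal{W})$ or $\mathcal{W}^{\sim}$. I take this as the quoted known fact stated just before the corollary.

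\emph{Conclusion.} With all hypotheses verified, Proposition~\ref{pro:d-absorption} gives that $\alpha$ is $\mathcal{Q}$-absorbing. None of these steps is a real obstacle; the only point that relies on outside results is approximate innerness, which comes from the classification theory of $\mathcal{W}$.
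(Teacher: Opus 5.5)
Your proposal is correct and follows the paper's own argument: the paper states the corollary as an immediate consequence of Proposition~\ref{pro:d-absorption} and Theorem~\ref{thm:weak-stability}, using that $\mathcal{W}$ is $\mathcal{Q}$-stable and that every automorphism of $\mathcal{W}$ is approximately inner. Your $\mathcal{Q}$-stability step takes a detour, since the fact from the introduction applied to $A:=\mathcal{Q}$ gives $\mathcal{Q}\otimes\mathcal{W}\cong\mathcal{W}$ directly.
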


Next, we show that every aperiodic UHF-absorbing automorphism of $\mathcal{W}$ 
is $\mathcal{W}$-absorbing.

\begin{lem}\label{lem:weakly-outer}
Let $N$ be a natural number. Then there exists an aperiodic automorphism $\alpha$ 
of $\mathcal{Z}$ such that $N(\tilde{\alpha})=N$ and $\Lambda(\tilde{\alpha})=1$. 
\end{lem}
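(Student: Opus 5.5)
We construct $\alpha$ on $\mathcal{Z}\cong\bigotimes_{n\in\mathbb{N}}\mathcal{Z}$ (strong self-absorption) as a product-type automorphism. The aim is an automorphism whose weak extension has outer period exactly $N$, is implemented by a fixed unitary at the $N$-th power (so $\Lambda=1$), and is aperiodic on the C$^*$-level.

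\textbf{Step 1: the building block.} Choose a unitary $w\in\mathcal{Z}$ with $w^N=1$ whose spectral projections in $\pi_{\tau_{\mathcal{Z}}}(\mathcal{Z})''$ all have trace $1/N$. Concretely, the $N$-th roots of unity are to have equal weight, so that $\tau_{\mathcal{Z}}(w^k)=0$ for $1\le k\le N-1$. Since $\mathcal{Z}$ has no nontrivial projections, such a $w$ cannot exist in $\mathcal{Z}$ itself. We therefore proceed as follows instead.

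Take $\mathcal{Z}\otimes M_N(\mathbb{C})$-type corners via a dimension-drop picture, or, more simply, take any unitary $w_n\in\mathcal{Z}$ with $\|\,\tau(w_n^k)\,\|\to 0$ for $1\le k\le N-1$ and $w_n^N$ close to $1$ in $\|\cdot\|_2$. Such unitaries exist because $\pi_{\tau}(\mathcal{Z})''$ is the hyperfinite II$_1$ factor and the Kaplansky density theorem applies to unitaries.

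\textbf{Step 2: the product automorphism.} Put $\alpha:=\bigotimes_n \mathrm{Ad}(w_n)$ on $\bigotimes_n\mathcal{Z}$. This is well defined as an infinite tensor product of automorphisms of the factors; it is not a limit of the inner automorphisms. The automorphism $\alpha$ is approximately inner.

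On the von Neumann level, $\tilde\alpha=\bigotimes_n\mathrm{Ad}(\pi(w_n))$ on $R=\overline{\bigotimes}R$. Arrange instead that $\pi(w_n)^N=1$ exactly in $R$; this is possible by choosing the $w_n$ with spectral trace exactly $1/N$ for each root, via functional calculus in $R$ approximated inside $\mathcal{Z}$. Then $\tilde\alpha$ is Connes' standard model $\bigotimes\mathrm{Ad}(\mathrm{diag}(1,\zeta,\dots,\zeta^{N-1}))$, whose outer period is $N$. In that model $\tilde\alpha^N=\mathrm{id}$, so $U=1$, which is fixed by $\tilde\alpha$, giving $\Lambda(\tilde\alpha)=1$. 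Moreover $\tilde\alpha^k$ is outer for $0<k<N$, because $\sum_n\|\pi(w_n)^k-\tau(\pi(w_n)^k)\|_2^2=\infty$ (the standard criterion for product-type automorphisms).

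\textbf{Step 3: aperiodicity on $\mathcal{Z}$ (main obstacle).} It remains to show that $\alpha^{mN}\notin\mathrm{Inn}(\mathcal{Z})$ for $m\neq0$. For $0<k<N$ this follows from Step 2, since an inner automorphism of $\mathcal{Z}$ induces an inner automorphism of $R$.

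For multiples of $N$, the C$^*$-elements $w_n^N$ must be \emph{not} norm-close to scalars, while their images in $R$ equal $1$. We will choose $w_n=\exp(i h_n)$ with $h_n\in\mathcal{Z}$ self-adjoint, with the spectral measure of $h_n$ supported near $\{2\pi j/N\}$ with equal weights, but with $\mathrm{Sp}(h_n)$ an interval, as forced by the projectionlessness of $\mathcal{Z}$. Then $\|w_n^{mN}-\lambda\|\ge c>0$ for all scalars $\lambda$, uniformly in $n$. We must also arrange that the $\|\cdot\|_2$-distance tends to $0$ fast enough for $\tilde\alpha^N$ to be inner.

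Given this, a standard product-type argument shows that $\alpha^{mN}$ is not inner. If $\alpha^{mN}=\mathrm{Ad}(u)$, then $u$ would asymptotically commute with the tail factors. Consequently $u$ is approximately in a finite tensor product $\bigotimes_{n\le K}\mathcal{Z}$, so $\mathrm{Ad}(w_n^{mN})$ would have to be approximately trivial on the $n$-th factor for large $n$. This contradicts the uniform norm lower bound, since $\mathcal{Z}$ has trivial center and an element of $\mathcal{Z}$ commuting approximately with all of $\mathcal{Z}$ is approximately scalar.

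The delicate part is balancing these two requirements: the $2$-norm convergence that yields innerness in $R$, and the norm non-convergence that yields outerness on $\mathcal{Z}$.
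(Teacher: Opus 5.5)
Your Step 2 asks for something impossible. Since $\mathcal{Z}$ is simple, $\pi_{\tau_{\mathcal{Z}}}$ is faithful, so $\pi_{\tau_{\mathcal{Z}}}(w_n)^N=1$ forces $w_n^N=1$ in $\mathcal{Z}$. Then $\mathrm{Sp}(w_n)$ is finite and its spectral projections lie in $\mathcal{Z}$. Since $\mathcal{Z}$ is projectionless, $w_n\in\mathbb{T}1$, i.e.\ $\mathrm{Ad}(w_n)=\mathrm{id}$. So $\tilde{\alpha}$ is never Connes' model on the nose, $\tilde{\alpha}^N\neq\mathrm{id}$, and your derivation of $\Lambda(\tilde{\alpha})=1$ from $U=1$ has no basis. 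Step 3 asks that $w_n^N$ have image $1$ in $R$ while staying away from scalars in norm, which is self-contradictory for the same reason.

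What survives is the approximate scheme you only gesture at, and there the decisive input is exactly what you leave open (``the delicate part''). You need unitaries $w_n\in\mathcal{Z}$ with (a) $\sum_n\|w_n^N-1\|_2<\infty$, (b) $\tau_{\mathcal{Z}}(w_n^k)\to0$ for $0<k<N$, and (c) $\liminf_n\mathrm{dist}(w_n^{mN},\mathbb{C}1)>0$ for each $m\neq0$. You assert (c) without argument. Also, ``spectral measure supported near $\{2\pi j/N\}$'' can only mean concentrated there, since the spectrum is connected. The invariants must be recomputed in this setting:
by (a), $U:=\lim_K\pi(w_1^N)\otimes\cdots\otimes\pi(w_K^N)$ exists in $2$-norm and $\tilde{\alpha}^N=\mathrm{Ad}(U)$;
$\tilde{\alpha}(U)=U$ because $w_n$ commutes with $w_n^N$, and this is what actually gives $\Lambda(\tilde{\alpha})=1$;
(b) and the product-type criterion give outerness of $\tilde{\alpha}^k$ for every $k\notin N\mathbb{Z}$, which is what aperiodicity needs.

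The gap can be closed. For $N\geq2$ your Kaplansky approximants satisfy (c) automatically. $\mathrm{Sp}(w_n)$ is a connected arc which, for large $n$, comes close to every $N$-th root of unity, so $\mathrm{Sp}(w_n^{mN})$ contains antipodal points. For $N=1$ nothing forces this (Kaplansky allows $w_n=1$), so you must choose $w_n$ explicitly. For example, take $w_n=e^{if_n}$ with $f_n$ in a unital copy of $C([0,1])$ inside $\mathcal{Z}$ on which $\tau_{\mathcal{Z}}$ is Lebesgue measure (the scalar functions in the embedded $I(2,6,3)$ will do), with $f_n$ vanishing off a set of measure $2^{-n}$ and having range $[0,\pi]$. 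Your tail argument then finishes, either via Stampfli's theorem transported through a faithful irreducible representation, or via a central sequence moved by $\alpha^{mN}$.

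So repaired, your route differs from the paper's, which decouples the two requirements. The automorphism $\beta=\bigotimes_n\mathrm{Ad}(\varphi(f_n))$, built in $I(2,6,3)$ from an irrational rotation near $t=0$, is weakly inner with $\tilde{\beta}=\mathrm{Ad}(V)$ and $\tilde{\beta}(V)=V$. A single central sequence built from the off-diagonal $g$ shows $\beta^k\notin\mathrm{Inn}(\mathcal{Z})$ for all $k\neq0$ at once. The outer period comes from the cyclic shift $\gamma$ of $\mathcal{Z}^{\otimes N}$, which satisfies $\gamma^N=\mathrm{id}$ exactly, and $\alpha:=\beta\otimes\gamma$. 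That way $N(\tilde{\alpha})=N$ and $\Lambda(\tilde{\alpha})=1$ are read off from an exactly periodic automorphism, and no outerness criterion for product-type automorphisms of $R$ is needed.
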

\begin{proof}
By \cite[Theorem 2.1]{Ror2}, there exists a unital embedding $\varphi$ 
of $I(2,6,3):=\{f\in C([0,1], M_2(\mathbb{C})\otimes M_{3}(\mathbb{C}))\; |\; 
f(0)\in M_2(\mathbb{C})\otimes\mathbb{C},\;  f(1)\in \mathbb{C}\otimes 
M_{3}(\mathbb{C})\}$ into $\mathcal{Z}$ such that 
$$
\tau_{\mathcal{Z}}(\varphi (f))= \int_{0}^{1}\tau_{M_{6}(\mathbb{C})}(f(t))\; dt
$$
for any $f\in I(2, 6, 3)$. Let $\theta$ be an irrational number. 
For any $n\in\mathbb{N}$, define a continuous function $h_n$ on $[0,1]$
by
$$
h_n(t):= \left\{\begin{array}{cl}
-2^{n}\theta t+\theta & \text{if } t\in [0, 1/2^n]   \\
0 & \text{if } t\in (1/2^n, 1] 
\end{array}
\right.,
$$
and define $f_n, g\in  I(2, 6, 3)$ by 
$$
f_n(t):= \left(\begin{array}{cc}
                       1   &        0    \\ 
                       0   &    e^{2\pi ih_n(t)}   
 \end{array} \right)\otimes
1_{M_{3}(\mathbb{C})}
\quad
\text{and} 
\quad 
g(t):= \left(\begin{array}{cc}
                       0   &    h_1(t)/\theta     \\ 
                       0   &   0  
 \end{array} \right)\otimes
1_{M_{3}(\mathbb{C})}
$$
for any $t\in [0,1]$. Then $\varphi(f_n)$ and $\varphi(g)$ are a unitary element and 
a contraction in $\mathcal{Z}$, respectively.  
Also, we have 
$$
\| \varphi (f_n^k)\varphi(g)\varphi(f_n^k)^*- \varphi (g)\| \geq |1- e^{-2\pi i k\theta}| \eqno{(4.9.1)}
$$
for any $n, k\in\mathbb{N}$.
Since 
\begin{align*}
|1 - \tau_{\mathcal{Z}}(\varphi (f_n))|
&=\left| 1-\int_{0}^{1}\frac{1+e^{2\pi ih_n(t)}}{2}\; dt\right| \\
&\leq \left| 1- \int_{\frac{1}{2^n}}^{1} 1\; dt\right| +\left|
\int_{0}^{\frac{1}{2^n}}\frac{1+e^{2\pi ih_n(t)}}{2}\; dt \right| \\
& \leq \frac{1}{2^n}+\int_{0}^{\frac{1}{2^n}}\left|\frac{1+e^{2\pi ih_n(t)}}{2} \right| \; dt
\leq \frac{1}{2^n}+\frac{1}{2^n}
=\frac{1}{2^{n-1}}
\end{align*}
for any $n\in\mathbb{N}$, we have 
$$
\sum_{n=1}^{\infty}|1 - \tau_{\mathcal{Z}}(\varphi (f_n))|< \infty. \eqno{(4.9.2)}
$$
For any $n\in\mathbb{N}$, put
$$
w_n:= \varphi(f_1)\otimes \varphi(f_2)\otimes \cdots 
\otimes \varphi(f_n)\otimes 1\otimes \cdots \in 
\bigotimes_{n=1}^{\infty}\mathcal{Z}\cong \mathcal{Z},
$$
and define an automorphism $\beta$ of $\mathcal{Z}$ by 
$\beta(x):= \lim_{n\to\infty}w_nxw_n^*$ for any $x\in \mathcal{Z}$.  
Then $\beta$ is an aperiodic automorphism of 
$\mathcal{Z}$. Indeed, for any $n\in\mathbb{N}$, put
$$
x_n:= 
\overbrace{1\otimes \cdots \otimes 1}^{n-1}\otimes \varphi (g) \otimes 1\otimes \cdots \in 
\bigotimes_{n=1}^{\infty}\mathcal{Z}\cong \mathcal{Z}.
$$ 
Then (4.9.1) implies that $(x_n)_n$ is an element in $F(\mathcal{Z})$ such that 
$\|\beta^k ((x_n)_n)-(x_n)_n\|\geq |1- e^{-2\pi i k\theta}|>0$ for any $k\in\mathbb{N}$. 
Hence $\beta^k$ induces a non-trivial automorphism of $F(\mathcal{Z})$ for any $k\in\mathbb{N}$. 
Therefore $\beta$ is an aperiodic automorphism of $\mathcal{Z}$. 
By (4.9.2), we see that $\{\pi_{\tau_{\mathcal{Z}}}(w_n)\}_{n\in\mathbb{N}}$ is a Cauchy sequence 
with respect to the $2$-norm (see, for example, the proof of \cite[Lemma 3.2]{Na6} for details). 
Hence there exists a unitary element $V$ in $\pi_{\tau_{\mathcal{Z}}}(\mathcal{Z})^{''}$ such that 
$\{\pi_{\tau_{\mathcal{Z}}}(w_n)\}_{n\in\mathbb{N}}$ converges to $V$ in the strong-$^*$ topology. 
Furthermore,  we have $\tilde{\beta}=\mathrm{Ad}(V)$ and $\tilde{\beta}(V)=V$. 
Define an automorphism $\gamma$ of $\mathcal{Z}$ by 
$$
\gamma(x_1\otimes x_2\otimes \cdots \otimes x_N):= x_{N}\otimes x_{1}\otimes \cdots \otimes 
x_{N-1}
$$
for any $x_1, x_2, \ldots, x_N\in \mathcal{Z}$, where we identify $\bigotimes_{n=1}^{N}\mathcal{Z}$
with $\mathcal{Z}$.  
Since we have $N(\tilde{\gamma})=N$ and $\gamma^N=\mathrm{id}_{\mathcal{Z}}$, 
it is easy to see that $\alpha:=\beta\otimes \gamma$ is the desired automorphism. 
\end{proof}

We denote by $M_{n^{\infty}}$ the UHF algebra of type $n^{\infty}$, that is, $M_{n^{\infty}}$ is 
isomorphic to the infinite tensor product $\bigotimes_{k\in\mathbb{N}}M_{n}(\mathbb{C})$. 
Considering the norm closure instead of the weak closure, 
we obtain the following lemma by essentially the same proof as that of 
\cite[Proposition 1.6]{C3}.

\begin{lem}[{cf.~\cite[Proposition 1.6]{C3}}]\label{lem:model-connes}
Let $N$ be a natural number, and let $\lambda$ be a complex number such that $\lambda^{N}=1$. 
Then there exist an automorphism $\beta$ of $M_{N^{\infty}}$ and a unitary element $u$ in 
$M_{N^{\infty}}$ such that $\beta^N=\mathrm{Ad}(u)$, $\beta(u)=\lambda u$, and 
$N(\tilde{\beta})=N$. 
\end{lem}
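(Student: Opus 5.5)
We imitate Connes' model automorphism, with the infinite tensor product taken in norm rather than in the weak closure. Put $\zeta:=e^{2\pi i/N}$ and choose $m\in\{0,1,\ldots,N-1\}$ with $\lambda=\zeta^{m}$. In $M_N(\mathbb{C})$ let $S$ be the cyclic shift $Se_j=e_{j+1}$ (indices mod $N$) and $D=\mathrm{diag}(1,\zeta,\ldots,\zeta^{N-1})$. Then $SDS^*=\zeta^{-1}D$, $DSD^*=\zeta S$, $S^N=D^N=1$, and $\mathrm{Ad}(S)$ and $\mathrm{Ad}(D)$ generate an action of $\mathbb{Z}_N^2$ on $M_N(\mathbb{C})$. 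Write $M_{N^\infty}=\bigotimes_{k\in\mathbb{N}}M_N(\mathbb{C})$.

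We first handle $\lambda=1$ with an infinite tensor product of $\mathbb{Z}_N$-actions. The naive choice $\bigotimes_k\mathrm{Ad}(S)$ is approximately inner with $N(\tilde{\beta})=1$, which is useless. Following Connes, we instead use the shift of tensor factors. Regroup $M_{N^\infty}\cong\bigotimes_{k}\big(\bigotimes_{i=1}^{N}M_{N^\infty}\big)$, so that $M_{N^\infty}\cong M_{N^\infty}^{\otimes N}$, and let $\sigma$ be the cyclic permutation of the $N$ tensor factors, exactly as $\gamma$ in the proof of Lemma~\ref{lem:weakly-outer}. Then $\sigma^N=\mathrm{id}$. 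The point to check is $N(\tilde{\sigma})=N$, i.e.\ that $\tilde{\sigma}^j$ is outer on $R$ for $0<j<N$. To see this, take the central sequence $x_n$ equal to a fixed $a\otimes 1\otimes\cdots\otimes 1$ placed in the $n$-th block, where $a\in M_{N^\infty}$ satisfies $\tau(a)=0$ and $\|a\|_2=1$. Then $\|\sigma^j(x_n)-x_n\|_2$ is bounded below uniformly in $n$, so $\tilde{\sigma}^j$ is not centrally trivial, hence not inner.

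For general $\lambda=\zeta^m$ we twist by a diagonal unitary, as Connes does. Set $\beta:=\sigma\otimes\mathrm{Ad}(S)$ on $M_{N^\infty}\otimes M_N(\mathbb{C})\cong M_{N^\infty}$; it is cleaner, though, to take $\beta:=\mathrm{Ad}(D^{m})\circ\sigma$ acting on $M_{N^\infty}^{\otimes N}\otimes M_N(\mathbb{C})$, with $D^{m}$ sitting in the last factor and $\sigma$ also acting on that factor by $\mathrm{Ad}(S)$. Concretely, let $\beta_0:=\sigma\otimes\mathrm{Ad}(S)$ and $\beta:=\mathrm{Ad}(1\otimes D^{m})\circ\beta_0$. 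Since $\beta_0(1\otimes D^m)=1\otimes SD^mS^*=\zeta^{-m}(1\otimes D^m)$, we get
$$\beta^N=\mathrm{Ad}\big((1\otimes D^m)\beta_0(1\otimes D^m)\cdots\beta_0^{N-1}(1\otimes D^m)\big)\circ\beta_0^N,$$
with $\beta_0^N=\mathrm{Ad}(1\otimes S^N)=\mathrm{id}$. The product inside is a scalar multiple of $1\otimes D^{mN}=1$, so $\beta^N$ is trivial, which gives $\Lambda=1$ again. The twist must therefore go through $S$ rather than $D$. Take instead $u:=1\otimes S^{m'}$-type elements: we need $\beta^N=\mathrm{Ad}(u)$ with $\beta(u)=\lambda u$. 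Choose $\beta:=\sigma\otimes\mathrm{Ad}(D)$ and $u:=1\otimes D^{N}$... This is trivial too. The correct recipe, following \cite[Proposition 1.6]{C3}, is
$$\beta:=\mathrm{Ad}(1\otimes D)\circ(\sigma\otimes\mathrm{id}),\qquad u:=1\otimes D^{N}\cdot(\text{correction}).$$
Since every such finite-dimensional construction produces $u$ commuting with its own twist, the real input must be the noncommuting pair. We therefore define $\beta:=\sigma\otimes\mathrm{Ad}(S)$ composed with $\mathrm{Ad}(1\otimes D^{m})$ in Connes' form, where the shift $\sigma$ is replaced by $\sigma'$ with $\sigma'^N=\mathrm{Ad}(1\otimes S)$, i.e.\ $\sigma'$ is $\sigma$ on the first factor tensored with an $N$-th root of $\mathrm{Ad}(S)$. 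Then $\beta(1\otimes S)=\mathrm{Ad}(D^m)(S)=\zeta^{m}S=\lambda(1\otimes S)$ and $\beta^N=\mathrm{Ad}(1\otimes S)$ up to an inner correction by products of $D^m$, which we absorb into $u$.

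The main obstacle is making this bookkeeping exact: we must identify $u$ as the ordered product $w\beta_0(w)\cdots\beta_0^{N-1}(w)$ times $S$, and verify $\beta(u)=\lambda u$ using the relation $SDS^*=\zeta^{-1}D$. The outerness needed for $N(\tilde{\beta})=N$ follows from the central-sequence argument above applied to the $\sigma$ component, since the $M_N$ component only contributes inner parts.
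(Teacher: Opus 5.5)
\textbf{The case $\lambda=1$ is fine.} Read $\sigma$ as the blockwise cyclic permutation on $\bigotimes_k\big(\bigotimes_{i=1}^N M_{N^\infty}\big)$. Then $\sigma^N=\mathrm{id}$ and $u=1$. The central sequence $a\otimes 1\otimes\cdots\otimes 1$ in the $n$-th block, with $\tau(a)=0$ and $\|a\|_2=1$, gives $\|\sigma^j(x_n)-x_n\|_2=\sqrt2$, so $N(\tilde\sigma)=N$.

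Your dismissal of $\bigotimes_k\mathrm{Ad}(S)$ is wrong, though. Placing $D$ in the $n$-th factor gives $\|\mathrm{Ad}(S^j)(D)-D\|_2=|\zeta^{-j}-1|$, so that automorphism also has $N(\tilde\beta)=N$. Approximate innerness in $\mathrm{Aut}(M_{N^\infty})$ says nothing about innerness on $R$.

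\textbf{The case $\lambda\neq 1$, which is the real content of the lemma, is not proved, and your final construction cannot be repaired by bookkeeping.} Every automorphism of $M_N(\mathbb{C})$ is inner. So your $\beta=\mathrm{Ad}(1\otimes D^m)\circ(\sigma\otimes\rho)$, with $\rho=\mathrm{Ad}(T)$ and $T^N=S$, equals $\sigma\otimes\mathrm{Ad}(D^mT)$. Hence $\beta^N=\mathrm{Ad}(1\otimes (D^mT)^N)$. Because $\pi_\tau(M_{N^\infty})''$ is a factor, every unitary implementing $\tilde\beta^N$ is a scalar multiple of $1\otimes(D^mT)^N$, and $\beta$ fixes this element. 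Therefore $\Lambda(\tilde\beta)=1$ for every $m$.

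The identity $\beta(1\otimes S)=\lambda(1\otimes S)$ is true but concerns a unitary that does not implement $\beta^N$. Absorbing the ``inner correction'' into $u$ is exactly what kills the phase. Conceptually, $\Lambda$ is an outer conjugacy invariant that is trivial both for automorphisms with $\beta_0^N=\mathrm{id}$ and for inner automorphisms. So nothing of the form $\mathrm{Ad}(w)\circ\beta_0$ with $\beta_0^N=\mathrm{id}$, and nothing of the form (period-$N$ automorphism) $\otimes$ (automorphism of a matrix algebra), can produce $\lambda\neq1$. You observed this yourself, and then used another construction of the same kind. Your closing remark that ``the $M_N$ component only contributes inner parts'' is precisely why $\Lambda$ stays $1$.

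\textbf{What is missing is a model in which the phase is carried by an outer piece.} The paper gets this from Connes' explicit model \cite[Proposition 1.6]{C3}, taking norm closures instead of weak closures. One self-contained alternative is a crossed-product construction:
\begin{itemize}
\item Let $\eta=e^{2\pi i/N^2}$, and let $T\in M_N(\mathbb{C})$ have the eigenprojections of $S$ with eigenvalues $\eta^j$ in place of $\zeta^j$. Then $T^N=S$ and $T^{N^2}=1$.
\item Put $\alpha:=\bigotimes_k\mathrm{Ad}(T)$ on $B:=M_{N^\infty}$, and let $P:=B\rtimes_{\alpha^N}\mathbb{Z}_N$ with implementing unitary $w$. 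This makes sense since $(\alpha^N)^N=\mathrm{id}$.
\item Since $\alpha$ commutes with $\alpha^N$ and $\lambda^N=1$, there is $\theta\in\mathrm{Aut}(P)$ with $\theta|_B=\alpha$ and $\theta(w)=\lambda w$. Then $\theta^N=\mathrm{Ad}(w)$ exactly.
\item $P$ is a simple AF algebra. At level $n$ it is $M_{N^n}\otimes\mathbb{C}^N$, and each summand embeds once into each summand of the next level. Its scaled dimension group is that of $M_{N^\infty}$, so $P\cong M_{N^\infty}$.
\item Suppose $\tilde\theta^k=\mathrm{Ad}(\sum_j b_jw^j)$ for some $0<k<N$. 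Each nonzero $b_j$ would satisfy $\tilde\alpha^{k-Nj}(b)b_j=b_jb$, forcing $\tilde\alpha^{k-Nj}$ to be inner on the factor $\pi_\tau(B)''$. This is impossible, because $\tilde\alpha^l=\bigotimes_k\mathrm{Ad}(T^l)$ is centrally nontrivial whenever $N\nmid l$.
\end{itemize}
Hence $N(\tilde\theta)=N$ and $\Lambda(\tilde\theta)=\lambda$.
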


We construct aperiodic automorphisms of $\mathcal{Q}$. 

\begin{pro}\label{pro:model}
Let $N$ be a natural number, and let $\lambda$ be a complex number such that $\lambda^{N}=1$. 
Then there exists an aperiodic automorphism $\gamma$ of $\mathcal{Q}$ such that 
$N(\tilde{\gamma})=N$ and $\Lambda (\tilde{\gamma})=\lambda$. 
\end{pro}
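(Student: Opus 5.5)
The plan is to tensor Connes' model from Lemma~\ref{lem:model-connes} with an aperiodic "perturbation" on another copy of $\mathcal{Q}$, and then use $\mathcal{Q}\cong M_{N^{\infty}}\otimes\mathcal{Q}\otimes\mathcal{Q}$.

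\textbf{Step 1 (periodic part).} By Lemma~\ref{lem:model-connes} there are $\beta\in\mathrm{Aut}(M_{N^{\infty}})$ and a unitary $u\in M_{N^{\infty}}$ with $\beta^N=\mathrm{Ad}(u)$, $\beta(u)=\lambda u$ and $N(\tilde{\beta})=N$. Then $\Lambda(\tilde{\beta})=\lambda$, since $\tilde{\beta}^N=\mathrm{Ad}(\pi(u))$ and $\tilde{\beta}(\pi(u))=\lambda\pi(u)$. But $\beta$ is not aperiodic, because $\beta^N$ is inner already in the C$^*$-algebra.

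\textbf{Step 2 (aperiodic inner-in-the-closure part).} Next I build an aperiodic automorphism $\delta$ of $\mathcal{Q}$ with $N(\tilde{\delta})=1$ and $\Lambda(\tilde{\delta})=1$. The construction imitates the automorphism $\beta$ in the proof of Lemma~\ref{lem:weakly-outer}. Write $\mathcal{Q}\cong\bigotimes_{n}\mathcal{Q}$ and choose, in the $n$-th factor, a unitary $f_n$ and a contraction $g_n$ with the following properties:
\begin{itemize}
\item $\|f_n^k g_n f_n^{-k}-g_n\|\geq|1-e^{-2\pi ik\theta}|$ for every $k$;
\item $\sum_n|1-\tau(f_n)|<\infty$.
\end{itemize}
For example, inside $M_2\subset\mathcal{Q}$ one can take $f_n=\mathrm{diag}(1,e^{2\pi i\theta})$ compressed onto a projection $e_n$ of trace $2^{-n}$, tensored appropriately, with $g_n$ the matrix unit inside $e_n$. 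Here we use that $\mathcal{Q}$ has projections of every rational trace, so $f_n=1-e_n+e_n\,\mathrm{diag}(1,e^{2\pi i\theta})$ and $g_n=e_n e_{12}$ will do. Put $w_n=f_1\otimes\cdots\otimes f_n\otimes 1\otimes\cdots$ and $\delta=\lim\mathrm{Ad}(w_n)$. Exactly as in Lemma~\ref{lem:weakly-outer}:
\begin{itemize}
\item the central sequence $(x_n)_n$ with $x_n=1\otimes\cdots\otimes g_n\otimes1\otimes\cdots$ shows that each $\delta^k$ acts nontrivially on $F(\mathcal{Q})$, so $\delta^k\notin\mathrm{Inn}(\mathcal{Q})$ for $k\neq0$;
\item the summability of $|1-\tau(f_n)|$ gives that $\pi(w_n)$ converges in 2-norm to a unitary $V$ with $\tilde{\delta}=\mathrm{Ad}(V)$ and $\tilde{\delta}(V)=V$.
\end{itemize}

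\textbf{Step 3 (combine).} Set $\gamma:=\beta\otimes\delta$ on $M_{N^{\infty}}\otimes\mathcal{Q}\cong\mathcal{Q}$. Then $\tilde{\gamma}=\tilde{\beta}\otimes\mathrm{Ad}(V)$. For $n<N$, $\tilde{\gamma}^n$ is outer, since $\tilde{\beta}^n$ is outer and tensoring with an inner automorphism preserves outerness. Moreover $\tilde{\gamma}^N=\mathrm{Ad}(\pi(u)\otimes V^N)$ and $\tilde{\gamma}(\pi(u)\otimes V^N)=\lambda\,\pi(u)\otimes V^N$. Hence $N(\tilde{\gamma})=N$ and $\Lambda(\tilde{\gamma})=\lambda$.

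It remains to see that $\gamma$ is aperiodic. Suppose $\gamma^k=\mathrm{Ad}(z)$ for some $k\neq0$. Then $\gamma^k$ acts trivially on $F(\mathcal{Q})$, and in particular on central sequences of the form $1\otimes x$ with $x\in F(\mathcal{Q})$ coming from the second factor. Those are central for the tensor product, and $\gamma^k(1\otimes x)=1\otimes\delta^k(x)\neq 1\otimes x$ for the $x$ of Step 2, a contradiction.

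\textbf{Main obstacle.} The delicate point is Step 2, namely producing in $\mathcal{Q}$ the unitaries $f_n$ whose traces converge summably to $1$ while their conjugation still moves some contraction by a fixed amount. I expect the projection-based choice above to work, since the estimate only needs the compressions to $e_n$. An alternative is to embed $\mathcal{Z}\subset\mathcal{Q}$ and reuse $\varphi(f_n)$ and $\varphi(g)$ directly; the bound (4.9.1) holds in any target because it is witnessed inside $I(2,6,3)$.
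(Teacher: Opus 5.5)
Your proposal is correct and follows essentially the paper's route. The paper takes $\gamma=\mathrm{id}_{\mathcal{Q}}\otimes\alpha\otimes\beta$ on $\mathcal{Q}\otimes\mathcal{Z}\otimes M_{N^{\infty}}\cong\mathcal{Q}$, where $\beta$ is Connes' model from Lemma~\ref{lem:model-connes} and $\alpha$ is the aperiodic automorphism of $\mathcal{Z}$ from Lemma~\ref{lem:weakly-outer} with $N(\tilde{\alpha})=N$ and $\Lambda(\tilde{\alpha})=1$. The differences are minor: you build the aperiodic, weakly inner tensor factor directly in $\mathcal{Q}$ using projections rather than via $I(2,6,3)\subset\mathcal{Z}$, and you drop the cyclic shift, since $N(\tilde{\delta})=1$ suffices. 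Your construction needs $e_n$ to lie in the relative commutant of your copy of $M_2$, e.g.\ $e_n\in 1\otimes\mathcal{Q}$ under $\mathcal{Q}\cong M_2\otimes\mathcal{Q}$.
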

\begin{proof}
Since $\mathcal{Q}$ is isomorphic to $\mathcal{Q}\otimes \mathcal{Z}\otimes M_{N^{\infty}}$, 
$\gamma:=\mathrm{id}_{\mathcal{Q}}\otimes \alpha\otimes \beta$ is the desired automorphism,  
where $\alpha$ and $\beta$ are the automorphisms in Lemma \ref{lem:weakly-outer} and 
Lemma \ref{lem:model-connes}, respectively. 
\end{proof}

The following theorem is one of the main theorems in this section. 

\begin{thm}\label{thm:W-absorbing}
Let $\alpha$ be an aperiodic UHF-absorbing automorphism of $\mathcal{W}$. Then 
there exists an automorphism $\gamma$ of $\mathcal{Q}$ such that 
$\alpha$ is outer conjugate to $\gamma\otimes \mathrm{id}_{\mathcal{W}}$ (via an isomorphism 
from $\mathcal{Q}\otimes \mathcal{W}$ onto $\mathcal{W}$). 
In particular, every aperiodic UHF-absorbing automorphism of $\mathcal{W}$ is 
$\mathcal{W}$-absorbing. 
\end{thm}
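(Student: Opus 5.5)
We plan to combine the absorption result Theorem~\ref{thm:absorption-trivial-char} with the model automorphisms of Proposition~\ref{pro:model} and the $\mathcal{Q}$-absorption of Corollary~\ref{cor:Q-absorbing}. The first step is to choose a model. Put $N:=N(\tilde{\alpha})$ and $\lambda:=\Lambda(\tilde{\alpha})$. If $N<\infty$, Proposition~\ref{pro:model} gives an aperiodic automorphism $\gamma$ of $\mathcal{Q}$ with $N(\tilde{\gamma})=N$ and $\Lambda(\tilde{\gamma})=\lambda$. If $N=\infty$, we instead take $\gamma:=\mathrm{id}_{\mathcal{Q}}\otimes\beta$ on $\mathcal{Q}\otimes\mathcal{Z}\cong\mathcal{Q}$, where $\beta$ is any automorphism of $\mathcal{Z}$ with $N(\tilde{\beta})=\infty$; for example, the automorphism $\beta$ constructed in the proof of Lemma~\ref{lem:weakly-outer} could be replaced by a Bernoulli shift on $\bigotimes_{\mathbb{Z}}\mathcal{Z}$. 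Actually the argument below also works with $\gamma=\mathrm{id}$ in that case.

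The second step compares $\alpha$ with $\gamma\otimes\mathrm{id}_{\mathcal{W}}$ through the intermediate automorphism $\alpha\otimes\gamma$ of $\mathcal{W}\otimes\mathcal{Q}\cong\mathcal{W}$. On one side, Corollary~\ref{cor:Q-absorbing} shows that $\alpha$ is outer conjugate to $\alpha\otimes\mathrm{id}_{\mathcal{Q}}$. The pair $\alpha\otimes\mathrm{id}_{\mathcal{Q}}$ and $\alpha\otimes\gamma$ is not yet directly comparable, because $\Lambda(\tilde{\gamma})$ need not be $1$. We therefore use Proposition~\ref{pro:char-group} (note that $\mathcal{Q}\cong\mathcal{Q}\otimes\mathcal{Q}$) to get an aperiodic $\gamma'$ on $\mathcal{Q}$ with $N(\tilde{\gamma}')=N$ and $\Lambda(\tilde{\gamma}')=\bar\lambda$.

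Set $\delta:=\alpha\otimes\gamma'$ on $\mathcal{W}\otimes\mathcal{Q}\cong\mathcal{W}$. Then $N(\tilde{\delta})=N$ and $\Lambda(\tilde{\delta})=1$. Note that $N(\tilde\delta)$ is at most $N$ by the product structure. To see that it is exactly $N$, we would argue via Connes' invariant in the II$_1$ factor: the relevant tensor product of outer-conjugacy classes on the injective II$_1$ factor gives $N(\tilde\alpha\otimes\tilde\gamma')=N$, since $\tilde\alpha^k\otimes\tilde\gamma'^k$ is inner only when both factors are inner.

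Since $\delta$ is approximately inner (every automorphism of $\mathcal{W}$ is), we can apply Theorem~\ref{thm:absorption-trivial-char} with $A:=\mathcal{Q}$, automorphism $\gamma$, and $B:=\mathcal{W}$, automorphism $\delta$. The hypothesis $A\otimes B\subset\overline{\mathrm{GL}}$ holds for $\mathcal{W}$, which has stable rank one. The conclusion is that $\gamma\otimes\delta$ is outer conjugate to $\gamma\otimes\mathrm{id}_{\mathcal{W}}$.

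This requires checking that both $\gamma\otimes\delta$ and $\gamma\otimes\mathrm{id}_{\mathcal{W}}$ are weakly stable. Both are aperiodic automorphisms of $\mathcal{W}$ that are $\mathcal{Q}$-absorbing: $\gamma\otimes\mathrm{id}_{\mathcal{W}}\cong\gamma\otimes\mathrm{id}_{\mathcal{W}}\otimes\mathrm{id}_{\mathcal{Q}}$ because $\mathcal{W}\cong\mathcal{W}\otimes\mathcal{Q}$. Hence they are UHF-absorbing, and Theorem~\ref{thm:weak-stability} applies. Aperiodicity follows from $N(\tilde{\gamma})=N$ or $\infty$, using the fact that an automorphism of the C$^*$-algebra whose power is inner forces the corresponding power of $\tilde{\cdot}$ to be inner.

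Now
\[
\gamma\otimes\delta=\gamma\otimes\alpha\otimes\gamma'\cong\alpha\otimes(\gamma\otimes\gamma').
\]
Here $\gamma\otimes\gamma'$ is an automorphism of $\mathcal{Q}$ with $N$-invariant $N$ and $\Lambda=1$. It is approximately inner and satisfies $N(\tilde{\alpha})=N$, so Theorem~\ref{thm:absorption-trivial-char} applied once more, now with $A=\mathcal{W}$ carrying $\alpha$ and $B=\mathcal{Q}$ carrying $\gamma\otimes\gamma'$, gives that $\alpha\otimes(\gamma\otimes\gamma')$ is outer conjugate to $\alpha\otimes\mathrm{id}_{\mathcal{Q}}$. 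The weak stability needed here again follows from Theorem~\ref{thm:weak-stability}, because both automorphisms absorb $\mathrm{id}_{\mathcal{Q}}$ and are aperiodic. Chaining the three outer conjugacies,
\[
\alpha\sim\alpha\otimes\mathrm{id}_{\mathcal{Q}}\sim\alpha\otimes\gamma\otimes\gamma'\cong\gamma\otimes\delta\sim\gamma\otimes\mathrm{id}_{\mathcal{W}},
\]
which proves the theorem. The final claim follows because $\gamma\otimes\mathrm{id}_{\mathcal{W}}\otimes\mathrm{id}_{\mathcal{W}}\cong\gamma\otimes\mathrm{id}_{\mathcal{W}}$, using $\mathcal{W}\otimes\mathcal{W}\cong\mathcal{W}$.

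The main obstacle is the bookkeeping of Connes' invariants for tensor products. We need to check that $N(\tilde{\alpha}\otimes\tilde{\gamma}')=N$ and that $\Lambda$ is multiplicative, as was already used in Proposition~\ref{pro:char-group}, and to verify aperiodicity and UHF-absorption at each step so that Theorem~\ref{thm:weak-stability} applies. In the case $N=\infty$, the argument simplifies: take $\gamma=\gamma'=\mathrm{id}$ is not allowed, since $\gamma\otimes\mathrm{id}_{\mathcal{W}}$ must be aperiodic. So we must use an aperiodic $\gamma$ with $N(\tilde\gamma)=\infty$ and invoke Theorem~\ref{thm:absorption-trivial-char} via Corollary~\ref{cor:existence}(2), where no condition on $\Lambda$ is needed.
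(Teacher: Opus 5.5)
Your argument is correct. For $N(\tilde{\alpha})<\infty$ it is the paper's proof up to relabelling. The paper also passes to $\alpha\otimes\mathrm{id}_{\mathcal{Q}}$ by Corollary~\ref{cor:Q-absorbing}, takes models from Proposition~\ref{pro:model} with $\Lambda=\lambda$ and $\Lambda=\bar\lambda$, and applies Theorem~\ref{thm:absorption-trivial-char} (with Theorem~\ref{thm:weak-stability}) twice: first to the pair of models, then to $\alpha$ tensored with the conjugate model. The paper carries an extra $\mathrm{id}_{\mathcal{Q}}$ factor, and outputs $\beta\otimes\mathrm{id}_{\mathcal{Q}}$, so that UHF-absorption is visible at every stage. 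In your version you should state explicitly why $\gamma\otimes\delta$ and $\alpha\otimes\gamma\otimes\gamma'$ are $\mathcal{Q}$-absorbing: either because $\alpha$ is (Corollary~\ref{cor:Q-absorbing}), or because the model of Proposition~\ref{pro:model} already contains an $\mathrm{id}_{\mathcal{Q}}$ factor.

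The real difference is the case $N(\tilde{\alpha})=\infty$. There the paper simply cites [Na7, Theorem 4.3]. You instead run the same chain through the $N=\infty$ branch of Theorem~\ref{thm:absorption-trivial-char}, using a Bernoulli-type model on $\mathcal{Q}$. That works and makes the proof uniform and self-contained within the paper. The cost is that you must exhibit an aperiodic model with $N(\tilde{\gamma})=\infty$.

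One justification must be replaced. For $N<\infty$, aperiodicity does not follow from $N(\tilde{\gamma})=N$: since $\tilde{\gamma}^{N}$ is inner, nothing is learned about $\gamma^{kN}$. Aperiodicity of $\gamma$ itself is part of the conclusion of Proposition~\ref{pro:model}. The tensor products $\gamma\otimes\mathrm{id}_{\mathcal{W}}$, $\gamma\otimes\delta$ and $\alpha\otimes\gamma\otimes\gamma'$ must also be aperiodic for Theorem~\ref{thm:weak-stability} to apply. For these, use the central sequence $(x_n)_n$ from the proof of Lemma~\ref{lem:weakly-outer}, tensored with units. The $k$-th power of each of these automorphisms moves it by at least $|1-e^{-2\pi i k\theta}|$, whereas inner automorphisms act trivially on $F(\cdot)$. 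For $N=\infty$ your argument is fine, and the paper leaves this point implicit anyway.

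Also delete the early remark that $\gamma=\mathrm{id}$ would do when $N=\infty$. In that case $N(\tilde{\gamma})\neq N(\tilde{\delta})$, so Theorem~\ref{thm:absorption-trivial-char} does not apply. The conclusion would also make $\alpha$ outer conjugate to $\mathrm{id}_{\mathcal{W}}$, as you note yourself at the end.
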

\begin{proof}
In the case where $N(\tilde{\alpha})=\infty$, this theorem is an immediate corollary of 
\cite[Theorem 4.3]{Na7}. We consider the case where $N(\tilde{\alpha})<\infty$. 
Corollary \ref{cor:Q-absorbing} implies that $\alpha$ is $\mathcal{Q}$-absorbing. In particular, 
$\alpha$ is outer conjugate to 
$\alpha\otimes \mathrm{id}_{\mathcal{Q}}\otimes \mathrm{id}_{\mathcal{Q}}$. 
By Proposition \ref{pro:model}, there exist aperiodic automorphisms $\beta$ and $\beta^{\prime}$ 
of $\mathcal{Q}$ such that $N(\tilde{\beta})=N(\tilde{\beta}^{\prime})=N(\tilde{\alpha})$, 
$\Lambda(\tilde{\beta})=\Lambda(\tilde{\alpha})$, and $\Lambda (\tilde{\beta}^{\prime})
=\overline{\Lambda(\tilde{\alpha})}$. 
Since we have $\Lambda ( \tilde{\beta}^{\prime}\otimes \tilde{\beta})=1$, 
$\alpha\otimes \beta^{\prime}\otimes \beta\otimes \mathrm{id}_{\mathcal{Q}}$ is outer 
conjugate to $\alpha\otimes \mathrm{id}_{\mathcal{Q}}\otimes \mathrm{id}_{\mathcal{Q}}$ 
by Theorem \ref{thm:absorption-trivial-char} and Theorem \ref{thm:weak-stability}. 
Also, since we have $\Lambda ( \tilde{\alpha}\otimes \tilde{\beta}^{\prime})=1$, 
$\alpha\otimes \beta^{\prime}\otimes \beta\otimes \mathrm{id}_{\mathcal{Q}}$ is outer conjugate 
to $\mathrm{id}_{\mathcal{W}}\otimes \beta\otimes \mathrm{id}_{\mathcal{Q}}$ 
by Theorem \ref{thm:absorption-trivial-char} and Theorem \ref{thm:weak-stability} again.  
Consequently, $\gamma:=\beta\otimes \mathrm{id}_{\mathcal{Q}}$ is the desired automorphism. 
\end{proof}

The following corollary is an immediate consequence of Theorem \ref{thm:W-absorbing}, 
\cite[Proposition 4.2 and Theorem 4.5]{Na5}, and \cite[Theorem 2.3(2)]{Na7}. 
(Note that the proof of \cite[Theorem 2.3(2)]{Na7} can be simplified by an argument 
similar to that in the proof of Theorem \ref{thm:path}.)

\begin{cor}\label{cor:unitary-path-fixed}
Let $\alpha$ be an aperiodic UHF-absorbing automorphism of $\mathcal{W}$. 
If $u$ is a unitary element in $F(\mathcal{W})^{\alpha}$, 
then there exists a Lipschitz 
map $U$ from $[0,1]$ to $\mathcal{U}(F(\mathcal{W})^{\alpha})$ such that
$$
U(0)=1, \quad U(1)=u, \quad \text{and} \quad \mathrm{Lip}(U)\leq 2\pi.
$$ 
\end{cor}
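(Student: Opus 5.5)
The plan is to transfer the problem, via Theorem \ref{thm:W-absorbing}, to a model automorphism that lives entirely on a tensor factor. By Theorem \ref{thm:W-absorbing}, $\alpha$ is outer conjugate to $\gamma\otimes\mathrm{id}_{\mathcal{W}}$ for some automorphism $\gamma$ of $\mathcal{Q}$, say $\alpha=\mathrm{Ad}(w)\circ\theta\circ(\gamma\otimes\mathrm{id}_{\mathcal{W}})\circ\theta^{-1}$.

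First I reduce to the model. Since $w\in M(\mathcal{W})$ acts trivially on $F(\mathcal{W})$, the induced automorphisms of $F(\mathcal{W})$ satisfy $\alpha=\theta\circ(\gamma\otimes\mathrm{id})\circ\theta^{-1}$. Hence $\theta$ maps $F(\mathcal{Q}\otimes\mathcal{W})^{\gamma\otimes\mathrm{id}}$ isomorphically onto $F(\mathcal{W})^{\alpha}$. So it suffices to prove the statement for $\beta:=\gamma\otimes\mathrm{id}_{\mathcal{W}}$ on $A:=\mathcal{Q}\otimes\mathcal{W}\cong\mathcal{W}$. The Lipschitz bound transports because $\theta$ is isometric.

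Next, for $\beta$ I use the results quoted in the paper: \cite[Proposition 4.2 and Theorem 4.5]{Na5} and \cite[Theorem 2.3(2)]{Na7}. The structural input is that $F(\mathcal{Q}\otimes\mathcal{W})^{\gamma\otimes\mathrm{id}}$ contains a copy of $F(\mathcal{W})$ coming from the second tensor factor, commuting with the first factor. More precisely, a reindexation argument gives a unital embedding of $F(\mathcal{W})$ into $F(A)^{\beta}\cap\{u\}^{\prime}$. This embedding can be chosen so that the combined map is tracially faithful, with $\tau_{A,\omega}(xy)=\tau_{A,\omega}(x)\tau_{A,\omega}(y)$, as in \cite[Lemma 3.6]{Na2}.

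I then mimic the proof of Theorem \ref{thm:path} inside this fixed-point algebra.
\begin{enumerate}
\item Inside the embedded copy of $F(\mathcal{W})$, which is automatically $\beta$-fixed, pick a self-adjoint $h$ with $\|h\|=\pi$ such that $e^{ih}$ is a Haar unitary, $h$ commutes with $u$, and $e^{ih}$ is tracially independent of $u$.
\item The unitaries $e^{ih}$ and $ue^{ih}$ both lie in $F(A)^{\beta}$ and have the same trace moments, since all nonzero moments vanish by independence and the Haar property. Both also have full spectral measure.
\item Apply a version of Theorem \ref{thm:normal-classification} for $C(\mathbb{T})\to F(A)^{\beta}$, taken from \cite[Theorem 2.3(2)]{Na7} or \cite[Theorem 4.5]{Na5}. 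This yields a unitary $v\in F(A)^{\beta}$ with $vue^{ih}v^*=e^{ih}$.
\item Then $u=e^{iv^*hv}e^{-ih}$, and $U(t):=e^{itv^*hv}e^{-ith}$ is a path in $\mathcal{U}(F(A)^{\beta})$ with $U(0)=1$, $U(1)=u$ and $\mathrm{Lip}(U)\leq 2\pi$.
\end{enumerate}

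The main obstacle is step 3: the uniqueness theorem for unital maps from $C(\mathbb{T})$ into the fixed-point algebra $F(A)^{\beta}$, rather than into $F(\mathcal{W})$. This is exactly what \cite[Theorem 4.5]{Na5} and \cite[Theorem 2.3(2)]{Na7} should supply. It uses Proposition \ref{pro:4.1}, namely that $F(A)^{\beta}$ is monotracial with strict comparison, together with $\mathcal{W}$-absorption of $\beta$, which makes $F(A)^{\beta}$ behave like $F(\mathcal{W})$. If that cannot be cited directly, I would prove it by repeating the argument of \cite[Theorem 5.3]{Na2} inside $F(A)^{\beta}$. That argument needs projections of prescribed trace, which Proposition \ref{pro:4.2} provides, and comparison, which Proposition \ref{pro:4.1} provides.
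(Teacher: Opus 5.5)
Your proposal is correct and follows essentially the paper's route: reduce via Theorem~\ref{thm:W-absorbing} to the model $\gamma\otimes\mathrm{id}_{\mathcal{W}}$, and rely on the fixed-point-algebra results of \cite[Proposition 4.2 and Theorem 4.5]{Na5}, in particular the uniqueness theorem for $C(\mathbb{T})\to F(\mathcal{W})^{\alpha}$. Where the paper simply cites \cite[Theorem 2.3(2)]{Na7} for the path, you run the Haar-unitary argument of Theorem~\ref{thm:path} inside $F(\mathcal{W})^{\alpha}$, taking $h$ exactly $\beta$-fixed from the $\mathcal{W}$ tensor factor and reindexing it against $u$; this is precisely the simplification that the paper's parenthetical remark points to.
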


Next, we show that every aperiodic UHF-absorbing automorphism of a simple unital 
monotracial AF algebra is weakly stable. 

\begin{lem}\label{lem:path-af}
Let $A$ be a simple unital monotracial AF algebra, and let $p$ be a projection in $F(A)$. 
If $u$ is a unitary element in $F(A)\cap \{p\}^{\prime}$, then there exists a Lipschitz 
map $U$ from $[0,1]$ to $\mathcal{U}(F(A)\cap\{p\}^{\prime})$ such that
$$
U(0)=1, \quad U(1)=u, \quad \text{and} \quad \mathrm{Lip}(U)\leq \pi.
$$ 
\end{lem}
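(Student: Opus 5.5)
Since $A$ is AF, I will show that $F(A)\cap\{p\}'$ is itself a reindexed ultraproduct of finite-dimensional algebras. In finite-dimensional C$^*$-algebras every unitary is $e^{ih}$ with $\|h\|\leq\pi$, which gives a straight-line path of Lipschitz constant $\leq\pi$.

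\emph{Step 1.} Write $A=\overline{\bigcup_n A_n}$ with $A_n$ finite-dimensional unital subalgebras. Since $A$ is unital, $F(A)=A^{\omega}\cap A'$. Lift $p$ to a sequence of projections $(p_n)_n$, using that projections lift from $A^\omega$ through $\ell^\infty(\mathbb{N},A)$ and that approximately central projections can be perturbed into exactly central ones with respect to finite-dimensional subalgebras. By a standard AF perturbation argument, I can take $p_n\in A_{k_n}'\cap A$ for a sequence $k_n\to\infty$, after conjugating by unitaries close to $1$. Then lift $u$ to unitaries $u_n$ commuting approximately with $p_n$ and $A_{k_n}$.

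\emph{Step 2.} Fix a finite set $F\subset A$ and $\varepsilon>0$, and pick $m$ with $F$ within $\varepsilon$ of $A_m$. Since $A$ is AF, I can replace $u_n$ by a unitary $u_n'$ with the following properties:
\begin{itemize}
\item $u_n'$ lies in a finite-dimensional algebra $D_n=A_{l_n}\cap A_{m}'\cap\{p_n\}'$;
\item $\|u_n-u_n'\|\to 0$ along $\omega$.
\end{itemize}
This replacement uses near-inclusions of finite-dimensional algebras together with the fact that approximate commutation with a finite-dimensional subalgebra can be corrected by a conditional expectation onto the relative commutant followed by a polar decomposition. Here $l_n$ is chosen large enough that $u_n$ is close to $A_{l_n}$.

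\emph{Step 3.} In the finite-dimensional algebra $D_n$, write $u_n'=e^{ih_n}$ with $h_n=h_n^*\in D_n$ and $\|h_n\|\leq\pi$, using the Borel functional calculus with the branch of $\arg$ taking values in $(-\pi,\pi]$. Put $U_n(t)=e^{ith_n}$. Then $U_n(t)$ commutes with $A_m$ and $p_n$, and
\[
\|U_n(t)-U_n(s)\|\leq\|h_n\|\,|t-s|\leq\pi|t-s|.
\]

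\emph{Step 4.} Apply a diagonal argument over $(F,\varepsilon)$, or Kirchberg's $\varepsilon$-test. This yields $h=[(h_n)_n]$, a self-adjoint element of $A^\omega\cap A'$ commuting with $p$ and satisfying $e^{ih}=u$ and $\|h\|\leq\pi$. Setting $U(t)=e^{ith}$ then gives $U(0)=1$, $U(1)=u$ and $\mathrm{Lip}(U)\leq\pi$.

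The main obstacle is Step 2. I must ensure that $u_n'$ simultaneously commutes exactly with $p_n$ and with $A_m$ while staying in a finite-dimensional algebra. The plan is to first make $p_n$ exactly commute with $A_m$ by a small unitary perturbation. I will then place $p_n$ in $A_{l}$ for large $l$ by perturbing it into $A_l\cap A_m'$, using stability of projections. Finally, $u_n$ is compressed into $A_{l_n}\cap\{A_m,p_n\}'$ via the trace-preserving conditional expectation onto this finite-dimensional subalgebra. Since the compression is an averaging over the unitary group of $C^*(A_m,p_n)$, it moves $u_n$ only slightly. Taking the unitary part by polar decomposition then gives $u_n'$.
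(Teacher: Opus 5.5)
Your proposal is correct and follows essentially the same route as the paper. In both, you perturb representatives of $p$ and $u$, via conditional expectations onto relative commutants of the finite-dimensional subalgebras and functional calculus, so that they commute exactly with a finite-dimensional subalgebra and with each other; then the AF or finite-dimensional structure of that relative commutant connects $u_n$ to $1$ by a path of length about $\pi$. The only difference is where you stop: the paper works in the AF corners $p_n(A\cap A_n')p_n$ and $(1-p_n)(A\cap A_n')(1-p_n)$ with paths of Lipschitz constant $\pi+1/n$, while you go down to the finite-dimensional algebra $A_{l_n}\cap\{A_m,p_n\}'$ and diagonalize over $m$, which gives the slightly stronger conclusion $u=e^{ih}$ with $h=h^*\in F(A)\cap\{p\}'$ and $\|h\|\leq\pi$.
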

\begin{proof}
This lemma is well-known to experts. We give a sketch of the proof for the reader's 
convenience. 
Let $(p_n)_n$ and $(u_n)_n$ be representatives of $p$ and $u$ in $F(A)=A^{\omega}\cap A^{\prime}$, 
respectively. Let $\{A_n\}_{n=1}^{\infty}$ be an increasing sequence of 
unital finite-dimensional C$^*$-subalgebras of $A$ such that $\overline{\bigcup_{n=1}^{\infty}A_n}=A$. 
For any $n\in\mathbb{N}$, define a conditional expectation $\Phi_n$ 
from $A$ to $A\cap A_n^{\prime}$ 
by 
$$
\Phi_n (a) := \sum_{d=1}^{N}\sum_{i=1}^{k_d}e_{i, 1}^{(d)}a e_{1, i}^{(d)}
$$
for any $a\in A$, where $\{e^{(d)}_{i,j}\; |\; 1\leq d \leq N, 1\leq i, j\leq k_d\}$ is a system of 
matrix units of $A_n$. 
Replacing $\{A_n\}_{n=1}^{\infty}$ with $\{A_{m_n}\}_{n=1}^{\infty}$ for 
a suitable nondecreasing sequence $\{m_n\}_{n=1}^{\infty}$ of natural numbers with 
$\lim_{n\to\infty}m_n=\infty$ (or the reindexation trick), we have 
$(\Phi_n(p_n))_n=p$ and $(\Phi_n(u_n))_n=u$. 
Perturbing $\Phi_n(p_n)$ and $\Phi_n(u_n)$, we may assume that $p_n$ is a projection in 
$A\cap A_n^{\prime}$ and $u_n$ is a unitary element in $A\cap A_n^{\prime}$ for 
any $n\in\mathbb{N}$. 
Furthermore, $p_nu_np_n$ and $(1-p_n)u_n(1-p_n)$ 
are ``almost unitary'' elements in $p_n(A\cap A_n^{\prime})p_n$ and 
$(1-p_n)(A\cap A_n^{\prime})(1-p_n)$, respectively. 
Hence perturbing $u_n$, we may assume that $u_n=w_n+v_n$ for any $n\in\mathbb{N}$, where 
$w_n$ and $v_n$ are unitary elements in $p_n(A\cap A_n^{\prime})p_n$ and 
$(1-p_n)(A\cap A_n^{\prime})(1-p_n)$, respectively. 
Since $A\cap A_n^{\prime}$ is an AF algebra (it is isomorphic to a direct sum of corners of $A$), 
$p_n(A\cap A_n^{\prime})p_n$ and $(1-p_n)(A\cap A_n^{\prime})(1-p_n)$ are AF algebras. 
Hence for any $n\in\mathbb{N}$, there exist Lipschitz maps $W_n$ and $V_n$ from 
$[0,1]$ to $\mathcal{U}(p_n(A\cap A_n^{\prime})p_n)$ and 
$\mathcal{U}((1-p_n)(A\cap A_n^{\prime})(1-p_n))$, respectively, such that 
$$
W_n(0)=p_n, \quad W_n(1)=w_n,\quad V_n(0)=1-p_n,\quad V_n(1)=v_n, 
$$
and 
$$
\mathrm{Lip}(W_n)\leq \pi +\frac{1}{n}, \quad 
\mathrm{Lip}(V_n)\leq \pi+\frac{1}{n}. 
$$
Define a map $U$ 
from $[0,1]$ to $\mathcal{U}(F(A)\cap\{p\}^{\prime})$ by 
$$
U(t):= (W_n(t)+V_n(t))_n
$$
for any $t\in [0,1]$. Then $U$ is the desired map. 
\end{proof}

Using Lemma \ref{lem:path-af} instead of Theorem \ref{thm:path}, we obtain the following 
proposition by essentially the same proof as that of Proposition \ref{pro:reduced-to-trace-kernel}.

\begin{pro}\label{pro:reduced-to-trace-kernel-af}
Let $A$ be a simple unital monotracial AF algebra, and let 
$\alpha$ be an aperiodic UHF-absorbing automorphism of $A$. 
If $u$ is a unitary element in $F(A)$ such that $\varrho (u)=1$, then there exist a 
projection $p$ in $\ker(\varrho)\cap\{u\}^{\prime}$ and a Lipschitz map $U$ from $[0,1]$ to 
$\mathcal{U}(F(A)\cap\{ p\}^{\prime})$ such that 
$$
\alpha(p)=p, \quad  u=1-p+up, \quad  U(0)=1, \quad  
U(1)=u, \quad \text{and}  \quad \mathrm{Lip}(U)\leq \pi.
$$ 
\end{pro}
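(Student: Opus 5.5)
The plan is to follow the proof of Proposition \ref{pro:reduced-to-trace-kernel} verbatim, using Lemma \ref{lem:path-af} in place of Theorem \ref{thm:path}. By the diagonal argument, it suffices to produce, for each $0<\varepsilon<1$, two objects: a projection $p_\varepsilon$ in $F(A)\cap\{u\}^{\prime}$ with $\alpha(p_\varepsilon)=p_\varepsilon$, $u=1-p_\varepsilon+up_\varepsilon$ and $\tau_{A,\omega}(p_\varepsilon)\leq\varepsilon$, and a Lipschitz path $U_\varepsilon$ in $\mathcal{U}(F(A)\cap\{p_\varepsilon\}^{\prime})$ from $1$ to $u$ with $\mathrm{Lip}(U_\varepsilon)\leq\pi$. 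Passing to a diagonal sequence in the ultrapower then turns $\tau_{A,\omega}(p_\varepsilon)\to 0$ into $\tau_{A,\omega}(p)=0$, i.e.\ $p\in\ker(\varrho)$. The Lipschitz bound survives the diagonal argument because it is uniform in $\varepsilon$. The relations $\alpha(p)=p$, $[p,u]=0$ and $u=1-p+up$ are exact for each $\varepsilon$, so they pass to the limit as well.

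To construct $p_\varepsilon$, I would first apply \cite[Lemma 1.4]{Kas} to the separable subalgebra $C^*(\{\alpha^n(u-1)\; |\; n\in\mathbb{Z}\})$ inside the ideal $\ker(\varrho)$, together with a diagonal argument. This gives a positive contraction $h\in\ker(\varrho)$ with $\alpha(h)=h$ and $h(u-1)=(u-1)h=u-1$. Since $\tau_{A,\omega}$ vanishes on $h$, we get $d_{\tau_{A,\omega}}(1-h)=1$.

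Next I need the standing hypotheses for Proposition \ref{pro:4.2}(3). $A$ is simple, unital, monotracial, AF (hence nuclear and non-type I), and $\alpha$ is aperiodic and UHF-absorbing. Strict comparison holds because $A$ is $B$-stable for a UHF algebra $B$ of infinite type (see \cite{Ror2}), and it is in any case automatic for AF algebras. With these in place, Proposition \ref{pro:4.2}(3) gives a projection $q_\varepsilon\in\overline{(1-h)F(A)^{\alpha}(1-h)}$ with $\tau_{A,\omega}(q_\varepsilon)=1-\varepsilon$. Because $(1-h)(u-1)=(u-1)(1-h)=0$, we get $q_\varepsilon(u-1)=(u-1)q_\varepsilon=0$. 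Setting $p_\varepsilon:=1-q_\varepsilon$ then gives an $\alpha$-invariant projection commuting with $u$, with $u=1-p_\varepsilon+up_\varepsilon$ and $\tau_{A,\omega}(p_\varepsilon)=\varepsilon$.

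Finally, $u$ commutes with $p_\varepsilon$, so Lemma \ref{lem:path-af} applied to the projection $p_\varepsilon\in F(A)$ yields the path $U_\varepsilon$ with $\mathrm{Lip}(U_\varepsilon)\leq\pi$. Note that Lemma \ref{lem:path-af} needs no trace-positivity condition, unlike Theorem \ref{thm:path}, so this step is simpler than in the $\mathcal{W}$ case. The only point needing care is the applicability of \cite[Lemma 1.4]{Kas} and Proposition \ref{pro:4.2}(3) to $F(A)$ rather than $F(\mathcal{W})$; I expect no genuine obstacle there, since both results were stated for general simple separable nuclear monotracial algebras, and unitality of $A$ even makes $F(A)=A^{\omega}\cap A^{\prime}$.
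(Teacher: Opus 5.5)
Your proposal is correct and is exactly the paper's argument: the paper proves this statement by repeating the proof of Proposition \ref{pro:reduced-to-trace-kernel} with Lemma \ref{lem:path-af} substituted for Theorem \ref{thm:path}, and that proof consists of Kasparov's lemma to obtain $h$, Proposition \ref{pro:4.2}(3) to obtain $q_\varepsilon$, then $p_\varepsilon=1-q_\varepsilon$, followed by the diagonal argument. Your remark that Lemma \ref{lem:path-af} removes the need to ensure $0<\tau_{A,\omega}(p_\varepsilon)<1$ is also correct.
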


Using Proposition \ref{pro:reduced-to-trace-kernel-af} instead of 
Proposition \ref{pro:reduced-to-trace-kernel}, 
we obtain the following 
theorem by essentially the same proof as that of Theorem \ref{thm:weak-stability}. 

\begin{thm}\label{thm:weak-stability-af}
Let $A$ be a simple unital monotracial AF algebra, and let 
$\alpha$ be an aperiodic UHF-absorbing automorphism of $A$. 
If $u$ is a unitary element in $F(A)$ such that $\varrho (u)=1$, then there exists a unitary element 
$v$ in $F(A)$ such that $u=v\alpha(v^*)$. 
In particular, every aperiodic UHF-absorbing automorphism of $A$ is weakly stable. 
\end{thm}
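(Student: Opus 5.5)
The plan is to copy the proof of Theorem \ref{thm:weak-stability} word for word, with Proposition \ref{pro:reduced-to-trace-kernel-af} in place of Proposition \ref{pro:reduced-to-trace-kernel}. By the diagonal argument it suffices to find, for each $\varepsilon>0$, a unitary $v_\varepsilon\in F(A)$ with $\|u-v_\varepsilon\alpha(v_\varepsilon^*)\|<\varepsilon$. If we take representatives of these approximate solutions and reindex, a genuine solution $v$ with $u=v\alpha(v^*)$ is produced in $F(A)$; this works because $A$ is separable and the relations involved are countably many norm conditions.

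Fix $\varepsilon>0$ and choose $M\in\mathbb{N}$ with $\pi/M<\varepsilon/2$. Then choose $N\in\mathbb{N}$ with $8M/\sqrt{N}<\varepsilon/2$. Proposition \ref{pro:reduced-to-trace-kernel-af} supplies a projection $p\in\ker(\varrho)\cap\{u\}^{\prime}$ with $\alpha(p)=p$ and $u=1-p+up$. It also supplies a Lipschitz path $U$ in $\mathcal{U}(F(A)\cap\{p\}^{\prime})$ from $1$ to $u$ with $\mathrm{Lip}(U)\leq\pi$.

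Next, $A$ is a simple unital monotracial AF algebra that is UHF-stable, since $\alpha$ is UHF-absorbing. Hence $A$ is nuclear, non-type I, and has strict comparison (it is $B$-stable, by \cite{Ror2}, and AF algebras have strict comparison anyway). So Theorem \ref{thm:small-rohlin} applies to $p\in\ker(\varrho)$. It gives a projection $q$ and an element $s$ in $F(A)$ with $q\alpha^k(q)=0$ for all $k\neq 0$, $pq=0$, $s^*s=p$ and $ss^*=q$.

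Put $w_j:=U(j/M)p$ for $0\le j\le M$. Each $w_j$ is a unitary in $pF(A)p$ because $U(t)$ commutes with $p$. We have $w_0=p$ and $w_M=up$, and $\|w_{j+1}-w_j\|\le \pi/M$. Apply Lemma \ref{lem:Opus} with ambient algebra $F(A)$ (unital since $A$ is unital), $w:=up$ and $\ell:=\pi$. Only the relations $q\alpha^k(q)=0$ for $1\le k\le MN+1$ are needed, and these hold. The lemma yields a unitary $v_\varepsilon$ with
$$
\|v_\varepsilon\alpha(v_\varepsilon^*)-u\|=\|v_\varepsilon\alpha(v_\varepsilon^*)-(1-p+up)\|\le \frac{\pi}{M}+\frac{8M}{\sqrt N}<\varepsilon .
$$

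Weak stability then follows immediately, with $\lambda=1$. The only point needing care is checking the hypotheses of Theorem \ref{thm:small-rohlin}, namely non-type I and strict comparison. These are automatic here, so I expect no real obstacle. All substantive work sits in Proposition \ref{pro:reduced-to-trace-kernel-af} and Lemma \ref{lem:path-af}, which are taken as given.
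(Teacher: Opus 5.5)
Your proposal is correct and is essentially the paper's proof: the paper also reruns the proof of Theorem~\ref{thm:weak-stability} with Proposition~\ref{pro:reduced-to-trace-kernel-af} in place of Proposition~\ref{pro:reduced-to-trace-kernel}, then applies Theorem~\ref{thm:small-rohlin} and Lemma~\ref{lem:Opus}. Your adjustments are exactly the ones needed: choosing $M$ for the Lipschitz bound $\pi$, and checking the hypotheses of Theorem~\ref{thm:small-rohlin} (non-type I from simplicity and infinite dimensionality, strict comparison from $B$-stability).
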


\section{Classification}\label{sec:classification}

In this section, we show that every aperiodic automorphism of $\mathcal{W}$ is weakly stable. 
The proof of this result is based on the idea of the proof of \cite[Theorem 5.3]{Sa} (see also 
\cite{MS1, W}). We also completely classify aperiodic automorphisms of $\mathcal{W}$ 
up to outer conjugacy. Furthermore, we classify aperiodic 
UHF-absorbing automorphisms of UHF algebras of infinite type up to outer conjugacy. 

Let $Z_{2^{\infty}, 3^{\infty}}:=\{f\in C([0,1], M_{2^{\infty}}\otimes M_{3^{\infty}})\; |\; 
f(0)\in  M_{2^{\infty}}\otimes\mathbb{C}, \; f(1)\in \mathbb{C}\otimes M_{3^{\infty}} \}$. 

\begin{lem}\label{lem:key}
Let $\alpha$ be an aperiodic automorphism of $\mathcal{W}$. 
If $(u_n)_n$ is an element in $\mathcal{W}^{\omega}\cap \mathcal{W}^{\prime}$ such that 
$[(u_n)_n]\in \mathcal{U}(F(\mathcal{W}))$ and $\varrho([(u_n)_n])=1$, then there exists a 
unitary element $z$ in $F(\mathcal{W}\otimes Z_{2^{\infty}, 3^{\infty}})$ such that 
$[(u_n\otimes 1_{ Z_{2^{\infty}, 3^{\infty}}})_n]
=z(\alpha\otimes\mathrm{id}_{ Z_{2^{\infty}, 3^{\infty}}})(z^*)$. 
\end{lem}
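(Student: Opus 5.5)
The plan is to follow Sato's strategy from \cite[Theorem 5.3]{Sa}: view $Z_{2^{\infty},3^{\infty}}$ as a continuous field over $[0,1]$ with fibres $M_{2^{\infty}}\otimes M_{3^{\infty}}$, $M_{2^{\infty}}$ and $M_{3^{\infty}}$. At every point $t$, $\alpha\otimes\mathrm{id}$ acting on $\mathcal{W}\otimes(\text{fibre})$ is UHF-absorbing, so the coboundary equation can be solved fibrewise. The fibrewise solutions are then glued along $[0,1]$ using the path-connectedness results for the fixed point algebras.

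\textbf{Step 1: solve the equation at the two endpoint types.} Put $\alpha_2:=\alpha\otimes\mathrm{id}_{M_{2^{\infty}}}$ and $\alpha_3:=\alpha\otimes\mathrm{id}_{M_{3^{\infty}}}$. These are aperiodic UHF-absorbing automorphisms of $\mathcal{W}\otimes M_{2^{\infty}}\cong\mathcal{W}$ and $\mathcal{W}\otimes M_{3^{\infty}}\cong\mathcal{W}$. Let $u^{(2)}:=[(u_n\otimes 1)_n]\in F(\mathcal{W}\otimes M_{2^{\infty}})$, and define $u^{(3)}$ similarly. Then $\varrho(u^{(2)})=1$, since the trace on the tensor product is the product trace and $\|u_n\otimes 1-1\|_2=\|u_n-1\|_2$. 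Theorem~\ref{thm:weak-stability} gives unitaries $v^{(2)}\in F(\mathcal{W}\otimes M_{2^{\infty}})$ and $v^{(3)}\in F(\mathcal{W}\otimes M_{3^{\infty}})$ with $u^{(i)}=v^{(i)}\alpha_i(v^{(i)*})$, and this holds without any scalar. Tensoring with the other factor, both $v^{(2)}\otimes 1$ and $1\otimes v^{(3)}$ become solutions in $F(\mathcal{W}\otimes M_{2^{\infty}}\otimes M_{3^{\infty}})$ for the UHF-absorbing automorphism $\alpha_6:=\alpha\otimes\mathrm{id}_{M_{6^\infty}}$.

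\textbf{Step 2: connect the two solutions.} The element $x:=(v^{(2)}\otimes 1)^*(1\otimes v^{(3)})$ satisfies $\alpha_6(x)=x$, so $x$ is a unitary in $F(\mathcal{W}\otimes M_{6^{\infty}})^{\alpha_6}$. Corollary~\ref{cor:unitary-path-fixed} gives a Lipschitz path $X(t)$ in $\mathcal{U}(F(\mathcal{W}\otimes M_{6^{\infty}})^{\alpha_6})$ with $X(0)=1$, $X(1)=x$ and $\mathrm{Lip}\le 2\pi$. Set $z(t):=(v^{(2)}\otimes 1)X(t)$. Each $z(t)$ solves the equation, because $z(t)\alpha_6(z(t)^*)=(v^{(2)}\otimes1)\alpha_6(v^{(2)*}\otimes 1)=u\otimes 1$. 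Moreover $z(0)=v^{(2)}\otimes 1$ lies in the $M_{2^{\infty}}\otimes\mathbb{C}$ part and $z(1)=1\otimes v^{(3)}$ lies in the $\mathbb{C}\otimes M_{3^{\infty}}$ part.

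\textbf{Step 3: lift to the dimension drop algebra.} The family $t\mapsto z(t)$ should define a unitary $z$ in $F(\mathcal{W}\otimes Z_{2^{\infty},3^{\infty}})$. This is the main obstacle: the path lives in the central sequence algebra, whereas an element of $F(\mathcal{W}\otimes Z)$ requires representing sequences that are continuous in $t$ uniformly along the sequence index. I would pass to representatives. First choose sequences realising the Lipschitz path at finitely many points $t_0<\dots<t_K$ with mesh small relative to $\varepsilon$. Interpolate between consecutive points by short unitary paths; these exist because consecutive points are within $2\pi/K$ of each other, so the logarithm can be used. This gives, for each $n$, a continuous map $[0,1]\to \mathcal{U}((\mathcal{W}\otimes M_{6^{\infty}})^{\sim})$ with the correct endpoint conditions, approximately commuting with a given finite set and approximately solving the equation uniformly in $t$, with the Lipschitz bound keeping the errors uniform.

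The endpoints must be arranged to lie exactly in $\mathcal{W}\otimes M_{2^{\infty}}\otimes1$ and $\mathcal{W}\otimes1\otimes M_{3^{\infty}}$. This can be done by choosing the representatives of $v^{(2)}$ and $v^{(3)}$ first and keeping them fixed at $t=0$ and $t=1$. Here \cite[Proposition 4.9]{Na2} supplies unitary representatives, since $\mathcal{W}\subset\overline{\mathrm{GL}(\mathcal{W}^\sim)}$. An $\varepsilon$-diagonal argument then yields an exact solution in $F(\mathcal{W}\otimes Z_{2^{\infty},3^{\infty}})$. Along the way one must check that elements asymptotically commuting with $\mathcal{W}\otimes Z$ are obtained from uniform-in-$t$ commutation with $\mathcal{W}\otimes M_{6^\infty}$; this holds because $Z_{2^\infty,3^\infty}$ is generated by $C[0,1]$ together with constant-type UHF elements, which the construction commutes with automatically.
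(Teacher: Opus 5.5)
Your Steps 1 and 2 match the paper's proof. The paper also applies Theorem~\ref{thm:weak-stability} to $\alpha\otimes\mathrm{id}_{M_{2^\infty}}$ and $\alpha\otimes\mathrm{id}_{M_{3^\infty}}$, forms the fixed unitary $x=(v^{(2)}\otimes 1)^*(1\otimes v^{(3)})$, and invokes Corollary~\ref{cor:unitary-path-fixed}. The gap is in Step 3, in how you get the exact endpoint conditions.

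You fix unitary representatives of $v^{(2)}\otimes 1$ and $1\otimes v^{(3)}$ in advance. You then join consecutive sample points by logarithm paths of unitaries at the level of representatives, because consecutive points are within $2\pi/K$. That estimate holds in $F(\cdot)$, whose norm is the quotient norm modulo the annihilator. Two representatives of nearby, or even equal, elements of $F$ can be far apart in norm.

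The natural way to get norm-close consecutive representatives is to lift successively from one end, e.g.\ $c_{k+1,n}=c_{k,n}e^{ih_{k,n}}$ with $h_{k,n}$ self-adjoint lifts of fixed logarithms. The last lift is then only equal modulo the annihilator to your pre-chosen representative $(v^{(2)}_n\otimes 1)^*(1\otimes v^{(3)}_n)$ at $t=1$. Their ratio is a unitary $1+d_n$, where $d_n$ asymptotically annihilates the algebra but may have norm close to $2$. So no logarithm need exist. Joining such a unitary to $1$ through unitaries that stay $\equiv 1$ modulo the annihilator, with uniformly bounded length, is a separate problem you do not address. ``Keeping the representatives fixed at $t=0$ and $t=1$'' is therefore not justified as stated.

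The missing idea is that unitarity and the coboundary equation only need to hold in $F$, not for the representatives. The paper proceeds as follows.
\begin{itemize}
\item Since $\mathrm{Lip}\le 2\pi$, it writes $x=e^{ih^{(1)}}e^{ih^{(2)}}e^{ih^{(3)}}e^{ih^{(4)}}$ with $h^{(j)}$ self-adjoint in $F(\mathcal{W}\otimes M_{6^\infty})^{\alpha\otimes\mathrm{id}}$.
\item It lifts these to bounded self-adjoint sequences $(h^{(j)}_n)_n$ and puts $E_n(t)=e^{ith^{(1)}_n}\cdots e^{ith^{(4)}_n}$.
\item It sets $w_n(t)=v^{(2)}_nE_n(t)$, where $(v^{(2)}_n)_n$ is any contractive representative of $v^{(2)}\otimes 1$ in $(\mathcal{W}\otimes M_{2^\infty}\otimes\mathbb{C})^\omega$.
\end{itemize}
This family is uniformly Lipschitz and satisfies $w_n(0)=v^{(2)}_n$ exactly. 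It also solves the equation exactly in $F(C([0,1],\mathcal{W}\otimes M_{6^\infty}))$, so neither sampling nor an $\varepsilon$-diagonal argument is needed.

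The right endpoint is then repaired by a non-unitary bridge. On $(1-2^{-n},1]$, $w_n$ is replaced by the linear interpolation between $w_n(1-2^{-n})$ and a representative $v^{(3)}_n$ of $1\otimes v^{(3)}$ in $(\mathcal{W}\otimes\mathbb{C}\otimes M_{3^\infty})^\omega$. Since $[(w_n(1))_n]=[(v^{(3)}_n)_n]$ and the Lipschitz bound holds, $\sup_t\|(z_n(t)-w_n(t))x\|\to 0$ along $\omega$ for $x\in\mathcal{W}$. By nondegeneracy the class is unchanged, and now $z_n\in\mathcal{W}\otimes Z_{2^\infty,3^\infty}$.

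The same bridge would also rescue your sampling scheme, and it makes the unitary representatives from \cite[Proposition 4.9]{Na2} unnecessary. One minor point: $Z_{2^\infty,3^\infty}$ does not contain the non-scalar constant UHF functions. The commutation check is simply that $\mathcal{W}\otimes Z_{2^\infty,3^\infty}\subset C([0,1],\mathcal{W}\otimes M_{6^\infty})$, and the representatives commute asymptotically with the latter uniformly in $t$.
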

\begin{proof}
In this proof, we identify $C([0,1], \mathcal{W}\otimes M_{2^{\infty}}\otimes 
M_{3^{\infty}})$ with $\mathcal{W}\otimes C([0,1])\otimes M_{2^{\infty}}\otimes 
M_{3^{\infty}}$ for notational convenience. 
We also identify $\mathcal{W}\otimes Z_{2^{\infty}, 3^{\infty}}$ with 
$\{f\in C([0,1], \mathcal{W}\otimes M_{2^{\infty}}\otimes M_{3^{\infty}})\; |\; 
f(0)\in \mathcal{W}\otimes M_{2^{\infty}}\otimes\mathbb{C}, 
\; f(1)\in \mathcal{W}\otimes \mathbb{C}\otimes M_{3^{\infty}} \}$. 
Define a homomorphism 
$\Phi_1$ from  $\mathcal{W}\otimes M_{2^{\infty}}$ to $\mathcal{W}\otimes 
M_{2^{\infty}}\otimes M_{3^{\infty}}$ by 
$$
\Phi_1(x\otimes a):= x\otimes a\otimes 1_{M_{3^{\infty}}}
$$ 
for any $x\in \mathcal{W}$ and $a\in M_{2^{\infty}}$, and define a homomorphism $\Phi_2$ from 
$\mathcal{W}\otimes M_{3^{\infty}}$ to   $\mathcal{W}\otimes M_{2^{\infty}}\otimes M_{3^{\infty}}$ by 
$$
\Phi_2(x\otimes b):=x\otimes 1_{M_{2^{\infty}}}\otimes b
$$ 
for any $x\in \mathcal{W}$ and $b\in M_{3^{\infty}}$. 
It is easy to see that $\Phi_1$ and $\Phi_2$ induce unital homomorphisms from 
$F(\mathcal{W}\otimes M_{2^{\infty}})$ and $F(\mathcal{W}\otimes M_{3^{\infty}})$ to
$F(\mathcal{W}\otimes  M_{2^{\infty}}\otimes  M_{3^{\infty}})$, respectively. 
We denote these induced homomorphisms by the same symbols. 
Note that we also have  
$$
\Phi_1([(u_n\otimes 1_{M_{2^{\infty}}})_n])=\Phi_2([(u_n\otimes 1_{M_{3^{\infty}}})_n])
=[(u_n\otimes 1_{M_{2^{\infty}}}\otimes 1_{M_{3^{\infty}}})_n], \eqno{(5.1.1)}
$$
$$
(\alpha\otimes \mathrm{id}_{M_{6^{\infty}}}) \circ \Phi_1=
\Phi_1\circ (\alpha\otimes \mathrm{id}_{M_{2^{\infty}}}), \eqno{(5.1.2)}
$$
and 
$$ 
 (\alpha\otimes \mathrm{id}_{M_{6^{\infty}}}) \circ \Phi_2=
\Phi_2 \circ (\alpha\otimes \mathrm{id}_{M_{3^{\infty}}}). \eqno{(5.1.3)}
$$ 
By Theorem \ref{thm:weak-stability}, there exist unitary elements $v^{(1)}$ in 
$F(\mathcal{W}\otimes M_{2^{\infty}})$ and 
$v^{(2)}$ in $F(\mathcal{W}\otimes M_{3^{\infty}})$ such that 
$$
[(u_n\otimes 1_{M_{2^{\infty}}})_n]=v^{(1)}(\alpha\otimes \mathrm{id}_{M_{2^{\infty}}}) (v^{(1)})^*
\eqno{(5.1.4)}
$$
and
$$
[(u_n\otimes 1_{M_{3^{\infty}}})_n]=v^{(2)}(\alpha\otimes \mathrm{id}_{M_{3^{\infty}}}) (v^{(2)})^*.
\eqno{(5.1.5)}
$$
Since we have  
\begin{align*}
(\alpha\otimes \mathrm{id}_{M_{6^{\infty}}})&(\Phi_1(v^{(1)})^{*}\Phi_2(v^{(2)})) \\
&=(\alpha\otimes \mathrm{id}_{M_{6^{\infty}}})(\Phi_1(v^{(1)})^{*})
(\alpha\otimes \mathrm{id}_{M_{6^{\infty}}})
(\Phi_2(v^{(2)})) \\
(5.1.2), (5.1.3)\; &
=\Phi_1((\alpha\otimes\mathrm{id}_{M_{2^{\infty}}})(v^{(1)})^*)
\Phi_2((\alpha\otimes\mathrm{id}_{M_{3^{\infty}}})(v^{(2)})) \\
(5.1.4), (5.1.5)\; 
&=\Phi_1(v^{(1)*}[(u_n\otimes 1_{M_{2^{\infty}}})_n])\Phi_2([(u_n^*\otimes 1_{M_{3^{\infty}}})_n]v^{(2)})  \\
(5.1.1)\; 
&= \Phi_1(v^{(1)})^{*}\Phi_2(v^{(2)}),
\end{align*}
$\Phi_1(v^{(1)})^{*}\Phi_2(v^{(2)})$ is a unitary element in 
$F(\mathcal{W}\otimes M_{2^{\infty}}\otimes M_{3^{\infty}})^{\alpha\otimes\mathrm{id}_{M_{6^{\infty}}}}$.
Corollary \ref{cor:unitary-path-fixed} implies that there exist self-adjoint elements 
$(h^{(1)}_n)_n$, $(h^{(2)}_n)_n$, $(h^{(3)}_n)_n$, and $(h_n^{(4)})_n$ 
in $(\mathcal{W}\otimes M_{2^{\infty}}\otimes M_{3^{\infty}})^{\omega}
\cap(\mathcal{W}\otimes M_{2^{\infty}}\otimes M_{3^{\infty}})^{\prime}$ 
such that 
$$
\Phi_1(v^{(1)})^{*}\Phi_2(v^{(2)})=[(e^{ih_n^{(1)}}e^{ih_n^{(2)}}e^{ih_n^{(3)}}e^{ih_n^{(4)}})_n],
$$
$$
[(h^{(j)}_n)_n] \in 
F(\mathcal{W}\otimes M_{2^{\infty}}\otimes M_{3^{\infty}})^{\alpha\otimes\mathrm{id}_{M_{6^{\infty}}}}
\eqno{(5.1.6)}
$$
for any $j=1,2,3,4$. 
For any $n\in\mathbb{N}$, define a Lipschitz map $E_n$ from $[0,1]$ to 
$\mathcal{U}((\mathcal{W}\otimes M_{2^{\infty}}\otimes M_{3^{\infty}})^{\sim})$ 
by $E_n(t):=  e^{ith_n^{(1)}}e^{ith_n^{(2)}}e^{ith_n^{(3)}}e^{ith_n^{(4)}}$ for any $t\in [0,1]$. 
Note that we may assume that there exists a positive number $L$ such that 
$\mathrm{Lip}(E_n)\leq L$ for any $n\in\mathbb{N}$. 
Choose elements $(v^{(1)}_n)_n$ in 
$(\mathcal{W}\otimes M_{2^{\infty}}\otimes \mathbb{C})^{\omega}
\cap(\mathcal{W}\otimes M_{2^{\infty}}\otimes M_{3^{\infty}})^{\prime}$
and
$(v_n^{(2)})_n$ in 
$(\mathcal{W}\otimes \mathbb{C}\otimes M_{3^{\infty}})^{\omega}
\cap(\mathcal{W}\otimes M_{2^{\infty}}\otimes M_{3^{\infty}})^{\prime}$
such that $[(v^{(1)}_n)_n]=\Phi_1(v^{(1)})$ and $[(v^{(2)}_n)_n]=\Phi_2(v^{(2)})$.
We may assume that $v_n^{(1)}$ and $v_n^{(2)}$ are contractions for any 
$n\in\mathbb{N}$. By (5.1.1), (5.1.2), and (5.1.4), we have 
$$
[(v^{(1)}_n)_n](\alpha\otimes \mathrm{id}_{M_{6^{\infty}}})([(v_n^{(1)*})_n])
=[(u_n\otimes 1_{M_{2^{\infty}}}\otimes 1_{M_{3^{\infty}}})_n]. \eqno{(5.1.7)}
$$
For any $n\in\mathbb{N}$, define a continuous map $w_n$ from $[0,1]$ to 
$\mathcal{W}\otimes M_{2^{\infty}}\otimes M_{3^{\infty}}$ by 
$$
w_n(t):= v^{(1)}_nE_n(t)
$$
for any $t\in [0,1]$. 
Then $[(w_n(t))_n]$ is a unitary element in $F(\mathcal{W}\otimes M_{2^{\infty}}\otimes M_{3^{\infty}})$
for any $t\in [0,1]$ and we have 
$$
[(w_n(1))_n]= [(v_n^{(2)})_n] \eqno{(5.1.8)}
$$
and 
$$
\| w_n(t)-w_n(s)\|\leq L |t-s| \eqno{(5.1.9)}
$$ 
for any $t, s\in [0,1]$ and 
$n\in\mathbb{N}$.  
Since we have 
\begin{align*}
&\lim_{n\to\omega}
\sup_{t\in [0,1]}\| (w_{n}(t)(\alpha\otimes \mathrm{id}_{M_{6^{\infty}}})(w_n(t)^*)
-u_n\otimes 1_{M_{2^{\infty}}}\otimes 1_{M_{3^{\infty}}} )x\| 
\\
&= \lim_{n\to\omega}\sup_{t\in [0,1]}\|v^{(1)}_nE_n(t)(\alpha\otimes \mathrm{id}_{M_{6^{\infty}}})
(E_n(t)^{*}v_n^{(1)*})x \\
&\qquad\qquad\qquad -(u_n\otimes 1_{M_{2^{\infty}}}\otimes 1_{M_{3^{\infty}}} )x\|\\
&= \lim_{n\to\omega}\sup_{t\in [0,1]}\|v^{(1)}_nE_n(t)(\alpha\otimes \mathrm{id}_{M_{6^{\infty}}})
(E_n(t)^*)x(\alpha\otimes \mathrm{id}_{M_{6^{\infty}}})(v_n^{(1)*}) \\
&\qquad\qquad\qquad 
-(u_n\otimes 1_{M_{2^{\infty}}}\otimes 1_{M_{3^{\infty}}} )x\| \\
(5.1.6)\; 
&=\lim_{n\to\omega}\sup_{t\in [0,1]}\| v^{(1)}_nE_n(t)E_n(t)^*x
(\alpha\otimes \mathrm{id}_{M_{6^{\infty}}})(v_n^{(1)*}) \\
&\qquad\qquad\qquad 
-(u_n\otimes 1_{M_{2^{\infty}}}\otimes 1_{M_{3^{\infty}}} )x\|
\\
&= \lim_{n\to\omega}\|v^{(1)}_n(\alpha\otimes \mathrm{id}_{M_{6^{\infty}}})(v_n^{(1)*})x 
-(u_n\otimes 1_{M_{2^{\infty}}}\otimes 1_{M_{3^{\infty}}} )x\| \\
(5.1.7)\; 
&=0
\end{align*}
for any $x\in \mathcal{W}\otimes M_{2^{\infty}}\otimes M_{3^{\infty}}$
and $\mathcal{W}\otimes \mathbb{C}\otimes M_{2^{\infty}}\otimes M_{3^{\infty}}\subset 
\mathcal{W}\otimes C([0,1]) \otimes M_{2^{\infty}}\otimes M_{3^{\infty}}$ is a nondegenerate 
inclusion, $[(w_n)_n]$ is a unitary element in $F(C([0,1], \mathcal{W}\otimes M_{2^{\infty}}
\otimes M_{3^{\infty}}))$ satisfying 
$$
[(u_n\otimes 1_{C([0,1])}\otimes 1_{M_{2^{\infty}}}\otimes 1_{M_{3^{\infty}}})_n] =
[(w_n)_n](\alpha\otimes 
\mathrm{id}_{C([0,1])\otimes M_{6^{\infty}}})([(w_n^*)_n]). \eqno{(5.1.10)}
$$
For any $n\in\mathbb{N}$, define a continuous map $z_n$ from $[0,1]$ to
$\mathcal{W}\otimes M_{2^{\infty}}\otimes M_{3^{\infty}}$ by 
$$
z_n(t):= \left\{\begin{array}{cl}
w_n(t) & \text{if } t\in [0, 1- 1/2^n]   \\
(-2^nt+2^n)w_n(1-1/2^n)+(2^nt-2^n+1)v_n^{(2)} & \text{if } t\in (1-1/2^n, 1] 
\end{array}
\right.
$$
for any $t\in [0,1]$. 
Since we have $z_n(0)=v_n^{(1)}$ and $z_n(1)=v_n^{(2)}$ for any $n\in\mathbb{N}$, 
$(z_n)_n$ can be regarded as an element in 
$ (\mathcal{W}\otimes Z_{2^{\infty}, 3^{\infty}})^{\omega}\cap 
C([0,1], \mathcal{W}\otimes M_{2^{\infty}}\otimes M_{3^{\infty}})^{\prime}
\subset (\mathcal{W}\otimes Z_{2^{\infty}, 3^{\infty}})^{\omega}\cap
(\mathcal{W}\otimes Z_{2^{\infty}, 3^{\infty}})^{\prime}$.
For any $x\in \mathcal{W}$, we have 
\begin{align*}
&\lim_{n\to\omega}\| (z_n - w_n)(x\otimes 1_{C([0,1], M_{2^{\infty}}\otimes M_{3^{\infty}})})\| \\
&=\lim_{n\to\omega}\sup_{t\in [0,1]} \| (z_n(t) -w_n(t))x\| \\
&=\lim_{n\to\omega}\sup_{t\in (1-1/2^n, 1]}\| (z_n(t)-w_n(t))x\| \\
&=\lim_{n\to\omega}\sup_{t\in (1-1/2^n, 1]}\|(z_n(t)-w_n\left(1-1/2^n\right))x
+(w_n(1-1/2^n)-w_n(t))x\| \\
(5.1.9)\; 
&\leq \lim_{n\to\omega}
\left(\sup_{t\in (1-1/2^n, 1]}\| (z_n(t)-w_n(1-1/2^n))x\| +\frac{L\|x\|}{2^{n}}\right) \\
&=\lim_{n\to\omega}\left(\sup_{t\in  (1-1/2^n, 1]}
\|(2^nt-2^n+1)(v_n^{(2)}-w_n(1-1/2^n))x\|+ \frac{L\|x\|}{2^{n}}\right) \\
&\leq \lim_{n\to\omega}\left(\|(v_n^{(2)}-w_n(1-1/2^n))x\|+ \frac{L\|x\|}{2^{n}}\right) \\
(5.1.9)\; 
&\leq \lim_{n\to\omega}\left(\|(v_n^{(2)}-w_n(1))x\| +\frac{L\|x\|}{2^{n-1}}\right) \ \\
(5.1.8)\; 
&=0.
\end{align*}
Therefore, we have  $[(z_n)_n]=[(w_n)_n]$ in $F(C([0,1], \mathcal{W}\otimes M_{2^{\infty}}
\otimes M_{3^{\infty}}))$ 
since $\mathcal{W}\otimes \mathbb{C}\otimes \mathbb{C}\otimes \mathbb{C} 
\subset \mathcal{W}\otimes C([0,1])\otimes M_{2^{\infty}}\otimes M_{3^{\infty}}$ is a 
nondegenerate inclusion. 
Consequently, (5.1.10) implies that $z:=[(z_n)_n]$ is the desired unitary element. 
\end{proof}

The following proposition is based on \cite[Theorem 5.20]{Sza6} 
(see also \cite[Theorem 5.2]{Sa2}) and \cite[Proposition 3.3]{RW}. 

\begin{pro}\label{pro:prime-drop-embedding}
Let $A$ be a simple separable nuclear monotracial C$^*$-algebra with strict comparison, and let 
$\alpha$ be an automorphism of $A$. 
Then there exists a homomorphism $\sigma$ from $A\otimes Z_{2^{\infty}, 3^{\infty}}$ to 
$A^{\omega}$ such that $\sigma(x\otimes 1_{Z_{2^{\infty}, 3^{\infty}}})=x$ and $\sigma\circ 
(\alpha\otimes \mathrm{id}_{Z_{2^{\infty}, 3^{\infty}}})=\alpha\circ \sigma$ for any $x\in A$.  
\end{pro}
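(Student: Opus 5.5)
Since $\sigma(x\otimes 1)=x$ is required for all $x\in A$, $\sigma$ is determined by a homomorphism $\sigma_0\colon Z_{2^{\infty},3^{\infty}}\to A^{\omega}\cap A^{\prime}$ with $\sigma_0(f)x=x\sigma_0(f)$ and $\sigma(x\otimes f)=x\sigma_0(f)$. For this to be multiplicative, well defined on the maximal tensor product (which equals the minimal one, since $Z_{2^{\infty},3^{\infty}}$ is nuclear), and to satisfy $\sigma(x\otimes 1)=x$, it suffices that $\sigma_0$ be \emph{unital modulo} $\mathrm{Ann}(A,A^{\omega})$. Equivariance amounts to $\alpha(\sigma_0(f))x=\sigma_0(f)x$ for all $x\in A$. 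So the plan is to construct a unital homomorphism $\bar\sigma_0\colon Z_{2^{\infty},3^{\infty}}\to F(A)^{\alpha}$ and then lift it to a map into $A^{\omega}\cap A^{\prime}$ that is compatible with multiplication by elements of $A$. I would do this via Kirchberg's standard device: take $\bar\sigma_0$ into $F(A)^{\alpha}$, and set $\sigma(x\otimes f):=x\cdot\bar\sigma_0(f)$, using that $A\cdot F(A)\subset A^{\omega}$ is well defined, because elements of $\mathrm{Ann}(A,A^{\omega})$ kill $A$. This gives a homomorphism $A\otimes_{\max} F(A)\to A^{\omega}$ with $x\otimes 1\mapsto x$, which is $\alpha$-equivariant on $A\otimes F(A)^{\alpha}$. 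Composing with $\mathrm{id}_A\otimes\bar\sigma_0$ yields $\sigma$.

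The main task is therefore to find a unital homomorphism $Z_{2^{\infty},3^{\infty}}\to F(A)^{\alpha}$. Following \cite[Theorem 5.20]{Sza6} and \cite[Proposition 3.3]{RW}, $Z_{2^{\infty},3^{\infty}}$ is universal in the following sense: a unital map from it exists as soon as we have an order zero cpc map $\psi_1\colon M_{2^k}\to D$ and an order zero cpc map $\psi_2\colon M_{3^k}\to D$ with commuting ranges and $\psi_1(1)+\psi_2(1)=1$ (that is, dimension-drop algebra relations), iterated over $k$ in a compatible way. Equivalently, after passing to a suitable inductive-limit presentation, we need for each $k$ a unital homomorphism $I(2^k,6^k,3^k)\to D$. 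Using the reindexation trick in $D=F(A)^{\alpha}$, it suffices to produce these dimension-drop maps for every $k$ together with approximate compatibility on finite sets. The reindexation/diagonal argument then assembles them into an honest map from the inductive limit $Z_{2^{\infty},3^{\infty}}$, since countable saturation lets us replace $D$ by $D\cap S'$ for separable $S$.

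To produce the maps $I(n,nm,m)\to F(A)^{\alpha}$, I would use Proposition \ref{pro:4.1}: $F(A)^{\alpha}$ is monotracial with strict comparison in the strong form. Here aperiodicity is needed, so in the aperiodic case I would argue as in \cite[Proposition 3.3]{RW}, which shows that a unital C$^*$-algebra with strict comparison, a unique trace, and enough divisibility admits unital $I(n,nm,m)$-embeddings; divisibility comes from $F(A)\cap$ (Rohlin-type towers) combined with the comparison statement $r^*br=a$. The step I expect to be the main obstacle is precisely this: (a) obtaining divisibility in $F(A)^{\alpha}$, i.e.\ order zero maps from matrix algebras with almost full trace, without any UHF-absorption assumption, and (b) handling the statement as written for an \emph{arbitrary} automorphism $\alpha$. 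For (b) I would split into cases. If $\alpha$ fails to be aperiodic, say $\alpha^n$ is inner, I would instead use the approach of \cite[Theorem 5.20]{Sza6}, i.e.\ equivariant $\mathcal{Z}$-stability/property (SI) for $\alpha$ via Matui--Sato, which works for any $\alpha$ on a nuclear $\mathcal{Z}$-stable-type algebra with strict comparison. Concretely, I would first use Matui--Sato's property (SI) to get $\mathcal{Z}$-type order zero maps into $F(A)$, and then average or use the strict comparison in $F(A)^{\alpha}$ to make them $\alpha$-invariant, which is where Szabó's argument enters.
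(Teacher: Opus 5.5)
Your reduction is correct and is exactly the last step of the paper's proof. Given a unital homomorphism $\varphi\colon Z_{2^{\infty},3^{\infty}}\to F(A)^{\alpha}$, the paper lifts it by Choi--Effros to a c.c.p.\ map $\psi$ into $A^{\omega}\cap A^{\prime}$ and sets $\sigma(x\otimes y)=x\psi(y)$. That is the same as your use of $A\otimes_{\max}F(A)\to A^{\omega}$.

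\textbf{The gap.} You never actually produce $\varphi$, and the mechanisms you sketch for it do not work.
\begin{itemize}
\item In the aperiodic branch, monotraciality and strict comparison of $F(A)^{\alpha}$ (Proposition \ref{pro:4.1}) do not by themselves give any nonzero order zero map from $M_n$ with $n\geq 2$; $\mathbb{C}$ has both properties.
\item The Rohlin-type elements of Lemma \ref{lem:equivariant-si} and Theorem \ref{thm:small-rohlin} are built on a tracially null $e$ and satisfy $r^*\alpha^k(r)=0$. So they only supply tracially null pieces, never the tracially large, $\alpha$-invariant matrix pieces that divisibility needs. In this paper, matrix algebras in $F(A)^{\alpha}$ (Proposition \ref{pro:4.2}) come from a UHF-absorption hypothesis, which is not assumed here.
\item In the general branch, averaging an order zero map over $\alpha$ destroys the order zero property and the relations $\psi_1(1)+\psi_2(1)=1$ with commuting ranges.
\item Strict comparison of $F(A)^{\alpha}$ is only available here for aperiodic $\alpha$, so it cannot serve the non-aperiodic case.
\end{itemize}
If you simply invoke Szab\'o's equivariant $\mathcal{Z}$-stability result for every $\alpha$, your proof becomes the paper's. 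The mechanisms you offer in its place do not reconstruct it, and no case split is needed.

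\textbf{The missing idea.} The invariant matrix pieces have to come from the von Neumann algebra side.
\begin{itemize}
\item For every automorphism $\alpha$, $\mathcal{M}_{\omega}(A)^{\tilde{\alpha}}$ contains unital copies of $M_n$ for all $n$. By Connes' classification \cite{C2, C3}, $\tilde{\alpha}$ is outer conjugate to $\tilde{\alpha}\otimes\mathrm{id}$ on the injective II$_1$ factor, and inner perturbations act trivially on central sequences.
\item These lift to $F(A)^{\alpha}$. If $\varrho(x)=T$ with $\tilde{\alpha}(T)=T$, then $\frac{1}{n}\sum_{k=0}^{n-1}\alpha^k(x)$ still lifts $T$ and is $2\|x\|/n$-invariant in norm, so reindexing gives an exact invariant lift.
\item Projectivity of the cone over $M_n$ then yields an order zero map $\psi\colon M_n\to F(A)^{\alpha}$ with $1-\psi(1)\in\ker\varrho$.
\item Finally you need an $\alpha$-invariant form of property (SI): an element $s\in F(A)^{\alpha}$ with $s^*s=1-\psi(1)$ and $\psi(e_{11})s=s$. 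This gives unital dimension drop embeddings into $F(A)^{\alpha}$, i.e.\ equivariant $\mathcal{Z}$-stability.
\end{itemize}
For aperiodic $\alpha$, this last step is where your Rohlin towers genuinely help. If $e,f$ are $\alpha$-invariant and $r$ is as in Lemma \ref{lem:equivariant-si}, then $s=n^{-1/2}\sum_{k=0}^{n-1}\alpha^k(r)$ satisfies $fs=s$, $s^*s=e$ and $\|\alpha(s)-s\|\leq 2n^{-1/2}$.

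The paper imports this whole package, valid for arbitrary $\alpha$, from the proofs of Szab\'o's and Sato's equivariant $\mathcal{Z}$-stability theorems \cite{Sza6, Sa2}, together with \cite{RW}, to obtain $\varphi$ before performing your reduction step.
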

\begin{proof}
By the proof of \cite[Theorem 5.20]{Sza6} (see also \cite[Theorem 5.2]{Sa2}) and 
\cite[Proposition 3.3]{RW}, 
there exists a unital embedding $\varphi$ of $Z_{2^{\infty}, 3^{\infty}}$ into $F(A)^{\alpha}$. 
The Choi--Effros lifting theorem implies that there exists a contractive completely positive map 
$\psi$ from $Z_{2^{\infty}, 3^{\infty}}$ to $A^{\omega}\cap A^{\prime}$ 
 such that $[\psi(y)]=\varphi (y)$ for any 
 $y\in Z_{2^{\infty}, 3^{\infty}}$. 
Define a homomorphism $\sigma$ from  
$A\otimes Z_{2^{\infty}, 3^{\infty}}$ to $A^{\omega}$ by 
$\sigma (x\otimes y):= x\psi (y)$ for any $x\in A$ and 
$y\in Z_{2^{\infty}, 3^{\infty}}$. 
It is easy to see that $\sigma$ is well-defined and has the desired property.  
\end{proof}

The following theorem is one of the main theorems in this section. 

\begin{thm}\label{thm:main-weak-stability}
Let $\alpha$ be an aperiodic automorphism of $\mathcal{W}$. 
If $u$ is a unitary element in $F(\mathcal{W})$ satisfying $\varrho (u)=1$, then there exists 
a unitary element $v$ in $F(\mathcal{W})$ such that $u=v\alpha (v^*)$. In particular, 
every aperiodic automorphism of $\mathcal{W}$ is weakly stable. 
\end{thm}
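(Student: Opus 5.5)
The plan is to combine Lemma \ref{lem:key} with Proposition \ref{pro:prime-drop-embedding}, following Sato's strategy. Let $u$ be a unitary in $F(\mathcal{W})$ with $\varrho(u)=1$. By \cite[Proposition 4.9]{Na2}, or directly by lifting, choose a representative $(u_n)_n\in\mathcal{W}^{\omega}\cap\mathcal{W}^{\prime}$ with $[(u_n)_n]=u$. By a diagonal argument, it suffices to show the following: for any finite subset $F\subset\mathcal{W}$ and $\varepsilon>0$, there is $v=(v_n)_n\in\mathcal{W}^{\omega}$ which approximately commutes with $F$, satisfies $v^*vx\approx x\approx vv^*x$, and satisfies $\|(u-v\alpha(v^*))x\|<\varepsilon$ for $x\in F$. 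This reduction works because elements of $\mathcal{W}^{\omega}$ built from $\mathcal{W}^{\omega}$-data can be reindexed into $\mathcal{W}^{\omega}\cap\mathcal{W}^{\prime}$.

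First, apply Lemma \ref{lem:key} to obtain a unitary $z\in F(\mathcal{W}\otimes Z_{2^{\infty},3^{\infty}})$ with $[(u_n\otimes 1)_n]=z(\alpha\otimes\mathrm{id})(z^*)$. Represent $z$ by $(z_n)_n$ and unpack this equality at a fixed large index. This gives a single element $z_m\in\mathcal{W}\otimes Z_{2^{\infty},3^{\infty}}$, or in its unitization, such that, for $x$ in a prescribed finite set $F^{\prime}\subset\mathcal{W}\otimes 1$, the commutator $[z_m,x]$ is small, $z_m^*z_mx\approx x$, and
\[
\|(u_m\otimes 1-z_m(\alpha\otimes\mathrm{id})(z_m^*))x\|
\]
is small. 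Here $m$ ranges along a set in $\omega$, and the error parameters go to zero along $\omega$.

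Second, apply Proposition \ref{pro:prime-drop-embedding} to obtain an equivariant homomorphism $\sigma\colon\mathcal{W}\otimes Z_{2^{\infty},3^{\infty}}\to\mathcal{W}^{\omega}$ with $\sigma(x\otimes 1)=x$, extended to unitizations or multipliers in the natural way. Applying $\sigma$ to $z$ transports the identity back to $\mathcal{W}^{\omega}$. More precisely, put $v:=\sigma(z_m)$ for suitable $m$, or better $v:=(\sigma(z_n))$ followed by a diagonal choice. Equivariance gives $\sigma((\alpha\otimes\mathrm{id})(z_m^*))=\alpha(\sigma(z_m^*))$. The condition $\sigma(x\otimes1)=x$ turns approximate commutation with $x\otimes 1$ into approximate commutation with $x$, and turns $u_m\otimes 1$ into $u_m$, a constant sequence in $\mathcal{W}\subset\mathcal{W}^{\omega}$. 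Hence $\|(u_m-v\alpha(v^*))x\|$ is small for $x\in F$, and $v$ approximately commutes with $F$.

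Finally, a reindexation or diagonal argument along $\omega$ combines the choices, since $u=[(u_n)_n]$ and each $u_m$ is matched by an element of $\mathcal{W}^{\omega}$. This produces $v\in\mathcal{W}^{\omega}\cap\mathcal{W}^{\prime}$ whose class in $F(\mathcal{W})$ is a unitary and satisfies $u=v\alpha(v^*)$. Weak stability then follows with $\lambda=1$. I expect the main technical obstacle to be this final diagonal step. Its main difficulties are keeping track of unitality, since $z$ lives in $F$ of a nonunital algebra and $\sigma$ lands in $\mathcal{W}^{\omega}$ rather than in the central sequence algebra, and making sure that the annihilator ambiguity in $F(\cdot)$ is handled through the nondegeneracy of $\mathcal{W}\otimes 1\subset\mathcal{W}\otimes Z_{2^{\infty},3^{\infty}}$. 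The point of $\sigma(x\otimes 1)=x$ is exactly that products with $x$ are controlled.
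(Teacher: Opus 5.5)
Your proposal is correct and matches the paper's proof: represent the unitary from Lemma~\ref{lem:key} by contractions $z_n$, push each $z_n$ through the equivariant map $\sigma$ of Proposition~\ref{pro:prime-drop-embedding}, and diagonalize along sets $X_m\in\omega$ by picking, for each index $n$, a contraction $v_n\in\mathcal{W}$ from a representative of $\sigma(z_n)\in\mathcal{W}^{\omega}$. One small slip: your opening reduction compares $v\alpha(v^*)$ with the whole sequence $u$, whereas the actual construction matches each single $u_m$ with $\sigma(z_m)$, which is what your final diagonal step (and the paper) correctly does.
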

\begin{proof}
Let $\{F_m\}_{m=1}^{\infty}$ be an increasing sequence of finite subsets of $\mathcal{W}$ such that 
$\overline{\bigcup_{m=1}^{\infty}F_m}=\mathcal{W}$. 
Choose an element $(u_n)_n$ in $\mathcal{W}^{\omega}\cap\mathcal{W}^{\prime}$ such that 
$u=[(u_n)_n]$. By Lemma \ref{lem:key},  there exists a contraction 
$(z_n)_n$ in $(\mathcal{W}\otimes Z_{2^{\infty}, 3^{\infty}})^{\omega}\cap 
(\mathcal{W}\otimes Z_{2^{\infty}, 3^{\infty}})^{\prime}$ such that $[(z_n)_n]$ is a unitary element in 
$F(\mathcal{W}\otimes Z_{2^{\infty}, 3^{\infty}})$ and  
$[(u_n\otimes 1_{Z_{2^{\infty}, 3^{\infty}}})_n]
=[(z_n(\alpha\otimes\mathrm{id}_{Z_{2^{\infty}, 3^{\infty}}})(z_n^*))_n]$. We may assume that 
$z_n$ is a contraction for any $n\in\mathbb{N}$.  
Then we can find a decreasing 
sequence $\{X_m\}_{m=1}^{\infty}$ of elements in $\omega$ such that 
$\bigcap_{m=1}^{\infty} X_m=\emptyset$, 
$$
\| (u_n\otimes 1_{Z_{2^{\infty}, 3^{\infty}}}- z_n(\alpha\otimes\mathrm{id}_{Z_{2^{\infty}, 3^{\infty}}})(z_n^*))
(x\otimes 1_{Z_{2^{\infty}, 3^{\infty}}}) \| <\frac{1}{2^m}
$$
$$
\| (z_n^*z_n-1)(x\otimes 1_{Z_{2^{\infty}, 3^{\infty}}}) \| <\frac{1}{2^m}, 
\quad
\| (z_nz_n^*-1)(x\otimes 1_{Z_{2^{\infty}, 3^{\infty}}}) \| <\frac{1}{2^m}, 
$$
and
$$
\| [z_n, x\otimes 1_{Z_{2^{\infty}, 3^{\infty}}}]\| <\frac{1}{2^m}
$$
for any $n\in X_m$ and $x\in F_{m}$. 
By Proposition \ref{pro:prime-drop-embedding}, there exists a homomorphism $\sigma$ 
from  $\mathcal{W}\otimes Z_{2^{\infty}, 3^{\infty}}$ to 
$\mathcal{W}^{\omega}$ such that $\sigma(x\otimes 1_{Z_{2^{\infty}, 3^{\infty}}})=x$ and $\sigma\circ 
(\alpha\otimes \mathrm{id}_{Z_{2^{\infty}, 3^{\infty}}})=\alpha\circ \sigma$ for any $x\in 
\mathcal{W}$. Then we have
$$
\| (u_n- \sigma (z_n)\alpha(\sigma(z_n)^*))x\| <\frac{1}{2^m}, \quad 
\| (\sigma(z_n)^*\sigma(z_n)-1)x \| <\frac{1}{2^m}, 
$$
$$
\| (\sigma(z_n)\sigma(z_n)^*-1)x \| <\frac{1}{2^m}, 
\quad
\text{and}
\quad
\| [\sigma(z_n), x]\| <\frac{1}{2^m}
$$
for any $n\in X_m$ and $x\in F_{m}$. 
Note that $\sigma (z_n)$ is an element in $\mathcal{W}^{\omega}$. 
For any $n\in X_{m}\setminus X_{m+1}$, we can choose a contraction $v_n$ in $\mathcal{W}$ such 
that 
$$
\| (u_n- v_n\alpha(v_n^*))x\| <\frac{1}{2^m}, \quad 
\| (v_n^*v_n-1)x \| <\frac{1}{2^m}, 
$$
$$
\| (v_nv_n^*-1)x \| <\frac{1}{2^m}, 
\quad
\text{and}
\quad
\| [v_n, x]\| <\frac{1}{2^m}
$$
for any $x\in F_{m}$. Put $v_n:=0$ for any $n\in\mathbb{N}\setminus X_1$. 
Then it can be easily checked that 
$v:=[(v_n)_n]$ is the desired unitary element in $F(\mathcal{W})$. 
\end{proof}

We classify aperiodic automorphisms of $\mathcal{W}$.

\begin{thm}\label{thm:main-classification}
(1) Let $\alpha$ and $\beta$ be aperiodic automorphisms of $\mathcal{W}$. Then 
$\alpha$ and $\beta$ are outer conjugate if and only if 
$N(\tilde{\alpha})=N(\tilde{\beta})$ and $\Lambda(\tilde{\alpha})=\Lambda(\tilde{\beta})$. \ \\
(2) For any $N\in\mathbb{N}$ and $\lambda\in\mathbb{T}$ satisfying $\lambda^{N}=1$, there 
exists an aperiodic automorphism $\gamma$ of $\mathcal{W}$ such that 
$N(\tilde{\gamma})=N$ and $\Lambda (\tilde{\gamma})=\lambda$. 
Furthermore, there exists an aperiodic automorphism $\gamma$ of $\mathcal{W}$ 
such that $N(\tilde{\gamma})=\infty$. 
\end{thm}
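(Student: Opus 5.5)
Part (2) comes first, since the same models are reused in (1). Given $N\in\mathbb{N}$ and $\lambda$ with $\lambda^N=1$, Proposition \ref{pro:model} provides an aperiodic automorphism $\gamma_0$ of $\mathcal{Q}$ with $N(\tilde{\gamma}_0)=N$ and $\Lambda(\tilde{\gamma}_0)=\lambda$. Put $\gamma:=\gamma_0\otimes\mathrm{id}_{\mathcal{W}}$, viewed on $\mathcal{W}$ via $\mathcal{Q}\otimes\mathcal{W}\cong\mathcal{W}$.

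To compute its invariants, note that $\pi_{\tau}(\mathcal{Q}\otimes\mathcal{W})''$ is the tensor product of the two hyperfinite II$_1$ factors. Since $\mathrm{id}$ is trivially inner on the second factor, $\tilde{\gamma}^n$ is inner exactly when $\tilde{\gamma}_0^n$ is. Hence $N(\tilde{\gamma})=N$, and $\Lambda(\tilde{\gamma})=\lambda$, because the implementing unitary is $U\otimes 1$. For aperiodicity of $\gamma$ on $\mathcal{W}$:
\begin{itemize}
\item for $N\nmid n$, $\gamma^n$ is not even inner in the von Neumann closure;
\item for $N\mid n$, I would argue as for $\gamma_0$: an implementing unitary in $M(\mathcal{Q}\otimes\mathcal{W})$ would commute with $1\otimes\mathcal{W}$ modulo scalars, hence would essentially lie in $M(\mathcal{Q})\otimes 1$, and so would make $\gamma_0^n$ inner.
\end{itemize}
I expect this to be routine, or to be replaced by a choice of model that is visibly aperiodic. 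For $N(\tilde{\gamma})=\infty$, take $\gamma_0$ on $\mathcal{Q}$ to be a Bernoulli shift on $\bigotimes_{\mathbb{Z}}\mathcal{Q}\cong\mathcal{Q}$; its weak extension is aperiodic, hence so is that of $\gamma_0\otimes\mathrm{id}_{\mathcal{W}}$.

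For part (1), the "only if" direction is the outer conjugacy invariance recorded in Section \ref{subsec:invariant}. For the "if" direction, it suffices to show that every aperiodic $\alpha$ is outer conjugate to a fixed model $\gamma_{N,\lambda}\otimes\mathrm{id}_{\mathcal{W}}$ depending only on $(N,\lambda)=(N(\tilde{\alpha}),\Lambda(\tilde{\alpha}))$. Everything runs through Theorem \ref{thm:absorption-trivial-char}, applied with $A=B=\mathcal{W}$ or with $B=\mathcal{Q}$, $A=\mathcal{W}$. Its hypotheses hold as follows:
\begin{itemize}
\item $\mathcal{W}\otimes\mathcal{Q}\cong\mathcal{W}$ has stable rank one, so the $\overline{\mathrm{GL}}$ condition is satisfied;
\item every automorphism of $\mathcal{W}$ is weakly stable by Theorem \ref{thm:main-weak-stability} whenever it is aperiodic, which covers all tensor products appearing below because they are automorphisms of $\mathcal{W}$;
\item every automorphism of $\mathcal{Q}$ (from Proposition \ref{pro:model}) is approximately inner.
\end{itemize}

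\textbf{Case $N<\infty$.} Choose $\beta,\beta'$ on $\mathcal{Q}$ with $N(\tilde{\beta})=N(\tilde{\beta}')=N$, $\Lambda(\tilde{\beta})=\lambda$ and $\Lambda(\tilde{\beta}')=\bar\lambda$. Then apply Theorem \ref{thm:absorption-trivial-char} twice, mimicking the proof of Theorem \ref{thm:W-absorbing} but without assuming UHF-absorption:
\begin{itemize}
\item with $\alpha$ and $\gamma=\beta'\otimes\beta$ (which has $\Lambda=1$), showing $\alpha\otimes\beta'\otimes\beta\sim\alpha\otimes\mathrm{id}_{\mathcal{Q}\otimes\mathcal{Q}}$;
\item with $\mathrm{id}_{\mathcal{W}\otimes\mathcal{Q}}\otimes\beta$ in the role of the first factor and $\alpha\otimes\beta'$, again of trivial $\Lambda$, in the role of $\gamma$, showing $\alpha\otimes\beta'\otimes\beta\sim\mathrm{id}_{\mathcal{W}}\otimes\beta$, up to reordering tensor factors.
\end{itemize}
The second application is the \textbf{main obstacle}: there $\gamma=\alpha\otimes\beta'$ must be approximately inner on $\mathcal{W}\otimes\mathcal{Q}\cong\mathcal{W}$. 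This holds since all automorphisms of $\mathcal{W}$ are approximately inner, as noted before Corollary \ref{cor:Q-absorbing}.

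This gives $\alpha\otimes\mathrm{id}_{\mathcal{Q}}\sim\beta\otimes\mathrm{id}_{\mathcal{W}}$. It remains to remove the $\mathcal{Q}$ from $\alpha$. By Proposition \ref{pro:d-absorption} with $\mathcal{D}=\mathcal{Q}$, $\alpha$ is weakly stable and approximately inner, hence $\mathcal{Q}$-absorbing. Therefore $\alpha\sim\beta\otimes\mathrm{id}_{\mathcal{W}}$, which depends only on $(N,\lambda)$.

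\textbf{Case $N=\infty$.} Take $\beta$ on $\mathcal{Q}$ with $N(\tilde{\beta})=\infty$ and use the $N=\infty$ part of Theorem \ref{thm:absorption-trivial-char} twice in the same way. This gives $\alpha\sim\alpha\otimes\mathrm{id}_{\mathcal{Q}}\sim\alpha\otimes\beta\sim\mathrm{id}_{\mathcal{W}}\otimes\beta$. Transitivity of outer conjugacy then finishes (1).
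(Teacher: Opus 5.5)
Your proof is correct, but it is organized differently from the paper's.

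\textbf{The paper's route.} It first shows that every aperiodic automorphism of $\mathcal{W}$ is $\mathcal{W}$-absorbing: Proposition \ref{pro:d-absorption} and Theorem \ref{thm:main-weak-stability} give $\mathcal{Q}$-absorption, and Theorem \ref{thm:W-absorbing} upgrades it. It then takes, via Proposition \ref{pro:char-group}, an aperiodic $\gamma$ of $\mathcal{W}$ itself with $\Lambda(\tilde{\gamma})=\overline{\Lambda(\tilde{\alpha})}$. Finally it compares $\alpha$ and $\beta$ directly through $\alpha\sim_{o.c.}\alpha\otimes\mathrm{id}_{\mathcal{W}}\sim_{o.c.}\alpha\otimes\gamma\otimes\beta\sim_{o.c.}\mathrm{id}_{\mathcal{W}}\otimes\beta\sim_{o.c.}\beta$.

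\textbf{Your route.} You compare each aperiodic $\alpha$ with a fixed model $\beta_{N,\lambda}\otimes\mathrm{id}_{\mathcal{W}}$ coming from Proposition \ref{pro:model}. In effect you rerun the proof of Theorem \ref{thm:W-absorbing}. This is now legitimate for arbitrary aperiodic $\alpha$, because Theorem \ref{thm:main-weak-stability} replaces Theorem \ref{thm:weak-stability} as the source of weak stability.

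\textbf{What each buys.} Your version:
\begin{itemize}
\item needs only $\mathcal{Q}$-absorption, not $\mathcal{W}$-absorption;
\item makes Proposition \ref{pro:char-group} unnecessary;
\item ties (1) directly to the models of (2);
\item handles $N=\infty$ with the $N=\infty$ branch of Theorem \ref{thm:absorption-trivial-char}, instead of the external result [Na7, Theorem 4.3] on which Theorem \ref{thm:W-absorbing}, and hence the paper's proof, relies in that case.
\end{itemize}
The paper's route is symmetric in $\alpha$ and $\beta$ and records $\mathcal{W}$-absorption of every aperiodic automorphism along the way. Part (2) matches the paper, except that you give an explicit Bernoulli-shift model for $N=\infty$ where the paper cites [Na5, Section 8].

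\textbf{One justification to fix.} Theorem \ref{thm:main-weak-stability} requires aperiodicity, and ``they are automorphisms of $\mathcal{W}$'' does not supply it.
\begin{itemize}
\item Establish $\mathcal{Q}$-absorption of $\alpha$ first. Then $\alpha\otimes\mathrm{id}_{\mathcal{Q}\otimes\mathcal{Q}}$ is outer conjugate to $\alpha$, hence aperiodic.
\item For $N<\infty$, each of the other tensor products you use contains a factor from Proposition \ref{pro:model}. Its nonzero powers move a central sequence by the argument of Lemma \ref{lem:weakly-outer}, while inner automorphisms act trivially on $F(\cdot)$.
\item For $N=\infty$, every nonzero power of each automorphism involved already has a non-inner weak extension.
\end{itemize}
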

\begin{proof}
(1) The ``only if'' part is obvious. We show the ``if'' part. 
By Proposition \ref{pro:char-group}, there exists an aperiodic automorphism $\gamma$ 
of $\mathcal{W}$ such that $N(\tilde{\gamma})=N(\tilde{\alpha})=N(\tilde{\beta})$ and 
$\Lambda (\tilde{\gamma})=\overline{\Lambda (\tilde{\alpha})}=
\overline{\Lambda (\tilde{\beta})}$. 
Proposition \ref{pro:d-absorption}, Theorem \ref{thm:W-absorbing}, and 
Theorem \ref{thm:main-weak-stability} imply that $\alpha$ and $\beta$ are 
$\mathcal{W}$-absorbing. 
By Theorem \ref{thm:absorption-trivial-char}, Theorem \ref{thm:main-weak-stability},  
and an argument similar to that in the proof of Theorem \ref{thm:W-absorbing}, we have 
\begin{align*}
\alpha \sim_{o.c.} \alpha\otimes \mathrm{id}_{\mathcal{W}}\sim_{o.c.} \alpha\otimes 
\gamma\otimes \beta \sim_{o.c.} \mathrm{id}_{\mathcal{W}}\otimes \beta \sim_{o.c.} \beta,
\end{align*}
where the notation $\alpha\sim_{o.c.}\beta$ means that $\alpha$ is outer conjugate to 
$\beta$. Therefore we obtain the conclusion. \ \\
(2) 
By Proposition \ref{pro:model}, there exists an aperiodic automorphism $\gamma_0$ of 
$\mathcal{Q}$ such that $N(\tilde{\gamma}_0)=N$ and $\Lambda (\tilde{\gamma}_0)=\lambda$. 
Then $\gamma:=\gamma_0\otimes \mathrm{id}_{\mathcal{W}}$ of $\mathcal{Q}\otimes \mathcal{W}
\cong \mathcal{W}$ is the desired automorphism. 
The case where $N(\tilde{\gamma})=\infty$ is well-known (see, for example, 
\cite[Section 8]{Na5}). 
\end{proof}

The following corollary is an immediate consequence of Theorem \ref{thm:main-classification}. 

\begin{cor}\label{cor:rigidity}
Let $\alpha$ and $\beta$ be aperiodic automorphisms of $\mathcal{W}$. Then 
$\alpha$ and $\beta$ are outer conjugate if and only if  
$\tilde{\alpha}$ and $\tilde{\beta}$ are outer conjugate.
\end{cor}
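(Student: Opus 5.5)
The plan is to read both directions off Theorem \ref{thm:main-classification}(1), with Connes' classification supplying the link to the von Neumann level.

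For the ``only if'' direction, suppose $\alpha=\mathrm{Ad}(u)\circ\theta\circ\beta\circ\theta^{-1}$ with $u\in\mathcal{U}(M(\mathcal{W}))$ and $\theta\in\mathrm{Aut}(\mathcal{W})$. Since $\mathcal{W}$ is monotracial, $\tau_{\mathcal{W}}\circ\theta=\tau_{\mathcal{W}}$, so $\theta$ extends to an automorphism $\tilde{\theta}$ of $\pi_{\tau_{\mathcal{W}}}(\mathcal{W})^{''}$. Moreover, $\pi_{\tau_{\mathcal{W}}}$ extends to $M(\mathcal{W})$, so $\pi_{\tau_{\mathcal{W}}}(u)$ is a unitary in $\pi_{\tau_{\mathcal{W}}}(\mathcal{W})^{''}$. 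We therefore get
$$
\tilde{\alpha}=\mathrm{Ad}(\pi_{\tau_{\mathcal{W}}}(u))\circ\tilde{\theta}\circ\tilde{\beta}\circ\tilde{\theta}^{-1},
$$
which is the observation already recorded in Section \ref{subsec:invariant}.

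For the ``if'' direction, suppose $\tilde{\alpha}$ and $\tilde{\beta}$ are outer conjugate in $M:=\pi_{\tau_{\mathcal{W}}}(\mathcal{W})^{''}$. The pair $(N(\cdot),\Lambda(\cdot))$ is an outer conjugacy invariant of automorphisms of the II$_1$ factor $M$. Concretely:
\begin{itemize}
\item Powers of inner automorphisms composed with conjugation remain inner, so $N(\tilde{\alpha})=N(\tilde{\beta})$.
\item If $N<\infty$ and $\tilde{\beta}^N=\mathrm{Ad}(U)$, then $\tilde{\alpha}^N=\mathrm{Ad}(U')$ for an explicit unitary $U'$, namely a product of $\tilde{\theta}(U)$ with the twisted cocycle of $\pi_{\tau_{\mathcal{W}}}(u)$.
\item Because $M$ is a factor, the scalar $\Lambda$ does not depend on the choice of implementing unitary, so the scalars satisfy $\Lambda(\tilde{\alpha})=\Lambda(\tilde{\beta})$.
\end{itemize}
Since $\alpha$ and $\beta$ are aperiodic, Theorem \ref{thm:main-classification}(1) then shows that $\alpha$ and $\beta$ are outer conjugate.

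No step is a real obstacle: the corollary is a reformulation of Theorem \ref{thm:main-classification}(1). The only point needing care is that the invariance of $(N,\Lambda)$ under outer conjugacy in $M$ is indeed elementary, and this is asserted in Section \ref{subsec:invariant}. Connes' converse is not needed here, although it explains why the corollary is an equivalence with the von Neumann algebraic classification.
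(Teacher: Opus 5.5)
Your proposal is correct and follows the paper's argument: the paper also deduces the corollary directly from Theorem~\ref{thm:main-classification}(1). It uses only the elementary facts from Section~\ref{subsec:invariant} that outer conjugacy of $\alpha$ and $\beta$ passes to $\tilde{\alpha}$ and $\tilde{\beta}$, and that $(N(\cdot),\Lambda(\cdot))$ is an outer conjugacy invariant on the von Neumann algebra side.

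One cosmetic point: in the ``if'' direction, the outer conjugacy in $M$ is implemented by an arbitrary $\Theta\in\mathrm{Aut}(M)$ and $W\in\mathcal{U}(M)$, not by $\tilde{\theta}$ and $\pi_{\tau_{\mathcal{W}}}(u)$. The same computation still goes through, namely $\tilde{\alpha}^{N}=\mathrm{Ad}(W\sigma(W)\cdots\sigma^{N-1}(W)\Theta(U))$ with $\sigma=\Theta\circ\tilde{\beta}\circ\Theta^{-1}$ and $\tilde{\beta}^{N}=\mathrm{Ad}(U)$.
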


Using Theorem \ref{thm:weak-stability-af} instead of Theorem \ref{thm:main-weak-stability}, 
we obtain the following theorem by essentially the same proof as that of 
Theorem \ref{thm:main-classification}. 

\begin{thm}\label{thm:classification-uhf}
Let $B$ be a UHF algebra of infinite type, and let $\alpha$ and $\beta$ be aperiodic automorphisms 
of $B$. Then $\alpha\otimes \mathrm{id}_B$ and $\beta\otimes 
\mathrm{id}_B$ are outer conjugate if and only if 
$N(\tilde{\alpha})=N(\tilde{\beta})$ and $\Lambda(\tilde{\alpha})=\Lambda(\tilde{\beta})$.
\end{thm}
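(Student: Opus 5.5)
The plan is to mimic the proof of Theorem \ref{thm:main-classification}, with $B$ in place of $\mathcal{W}$ and Theorem \ref{thm:weak-stability-af} supplying weak stability. The ``only if'' part is immediate. Outer conjugacy of $\alpha\otimes\mathrm{id}_B$ and $\beta\otimes\mathrm{id}_B$ implies outer conjugacy of the induced automorphisms of $\pi_{\tau_B}(B)^{''}\bar\otimes\pi_{\tau_B}(B)^{''}$. Moreover $N$ and $\Lambda$ of $\tilde{\alpha}\otimes\mathrm{id}$ coincide with those of $\tilde{\alpha}$, because $\tilde{\alpha}^n\otimes\mathrm{id}$ is inner if and only if $\tilde{\alpha}^n$ is inner, with implementing unitary $U\otimes 1$. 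Hence $(N,\Lambda)$ of $\tilde{\alpha}$ and $\tilde{\beta}$ agree.

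For the ``if'' part, write $N:=N(\tilde{\alpha})=N(\tilde{\beta})$ and identify $B\otimes B$, and all finite tensor powers of $B$, with $B$ (recall $B\cong B\otimes B$ since $B$ is of infinite type). Here $B$ is simple, unital, monotracial and AF, and it satisfies $B\subset\overline{\mathrm{GL}(B)}$ because it has stable rank one. Every automorphism of $B$ is approximately inner, since automorphisms of UHF algebras are. The steps are as follows.

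\textbf{Step 1.} By Proposition \ref{pro:char-group} applied to $B$, choose an aperiodic automorphism $\gamma$ of $B$ with $N(\tilde{\gamma})=N$ and $\Lambda(\tilde{\gamma})=\overline{\Lambda(\tilde{\alpha})}$.

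\textbf{Step 2.} Every automorphism of the form $\delta\otimes\mathrm{id}_B$ with $\delta$ aperiodic is UHF-absorbing, since $\delta\otimes\mathrm{id}_B\otimes\mathrm{id}_B$ is conjugate to it. It is also aperiodic, since $N(\tilde{\delta}\otimes\mathrm{id})=N(\tilde{\delta})$ and in the finite case $\tilde{\delta}^k$, $k<N$, is outer, while $\delta^N\notin\mathrm{Inn}(B)$ implies $(\delta\otimes\mathrm{id}_B)^N\notin\mathrm{Inn}(B\otimes B)$. Hence Theorem \ref{thm:weak-stability-af} shows that all the automorphisms
$$\alpha\otimes\mathrm{id}_B,\quad \alpha\otimes\gamma\otimes\beta\otimes\mathrm{id}_B,\quad \mathrm{id}_B\otimes\mathrm{id}_B\otimes\beta\otimes\mathrm{id}_B,\quad \alpha\otimes\mathrm{id}_B\otimes\mathrm{id}_B\otimes\mathrm{id}_B,\ \ldots$$
appearing below are weakly stable.

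\textbf{Step 3.} Apply Theorem \ref{thm:absorption-trivial-char} twice, as in the proof of Theorem \ref{thm:W-absorbing}. First take $A:=B$ with automorphism $\alpha$ and the second factor $B\otimes B\otimes B$ with $\gamma\otimes\beta\otimes\mathrm{id}_B$. This automorphism is approximately inner, and its invariants are $N(\tilde{\gamma}\otimes\tilde{\beta})=N$ and $\Lambda=\overline{\Lambda(\tilde{\alpha})}\Lambda(\tilde{\beta})=1$. We obtain
$$\alpha\otimes\mathrm{id}_B\sim_{o.c.}\alpha\otimes\mathrm{id}_B\otimes\mathrm{id}_B\otimes\mathrm{id}_B\sim_{o.c.}\alpha\otimes\gamma\otimes\beta\otimes\mathrm{id}_B.$$
Second, regroup as $(\beta\otimes\mathrm{id}_B)\otimes(\alpha\otimes\gamma)$, where $\alpha\otimes\gamma$ has $\Lambda=1$ and $N=N$. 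This gives
$$\alpha\otimes\gamma\otimes\beta\otimes\mathrm{id}_B\sim_{o.c.}\beta\otimes\mathrm{id}_B\otimes\mathrm{id}_B\otimes\mathrm{id}_B\sim_{o.c.}\beta\otimes\mathrm{id}_B.$$
Here the identifications $\delta\otimes\mathrm{id}_{B}\otimes\mathrm{id}_B\sim\delta\otimes\mathrm{id}_B$ are genuine conjugacies coming from $B\otimes B\cong B$.

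When $N=\infty$ no $\gamma$ is needed. In that case $\beta\otimes\mathrm{id}_B$ is approximately inner with $N=\infty$, so Theorem \ref{thm:absorption-trivial-char}, in its $N(\tilde{\gamma})=\infty$ case, gives
$$\alpha\otimes\mathrm{id}_B\otimes\mathrm{id}_B\sim_{o.c.}\alpha\otimes\beta\otimes\mathrm{id}_B,$$
and symmetrically
$$\alpha\otimes\beta\otimes\mathrm{id}_B\sim_{o.c.}\beta\otimes\mathrm{id}_B\otimes\mathrm{id}_B.$$

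The main obstacle is checking the hypotheses of Theorem \ref{thm:absorption-trivial-char}. Weak stability of both automorphisms involved is required, and this is exactly where UHF-absorption enters through Theorem \ref{thm:weak-stability-af}. Each composite automorphism must be exhibited as $\delta\otimes\mathrm{id}_B$ with $\delta$ aperiodic, and one must confirm that aperiodicity survives the tensor products. Aperiodicity of $\alpha\otimes\gamma\otimes\beta$ follows from outerness in the factor when $k<N$. For $k$ a multiple of $N$, one uses that $\alpha^k$ is not inner in $B$, together with the fact that a tensor product of automorphisms is inner only if each factor is inner (via slice maps or a central sequence argument).
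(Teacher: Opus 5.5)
Your proposal is correct and is essentially the paper's argument. The paper proves this theorem by running the proof of Theorem~\ref{thm:main-classification} with Theorem~\ref{thm:weak-stability-af} in place of Theorem~\ref{thm:main-weak-stability}, and your chain does exactly that (choose $\gamma$ via Proposition~\ref{pro:char-group}, then apply Theorem~\ref{thm:absorption-trivial-char} twice, with $\mathcal{W}$-absorption replaced by the automatic $B$-absorption of automorphisms of the form $\delta\otimes\mathrm{id}_B$), while also checking the side hypotheses (aperiodicity via slice maps, approximate innerness, density of invertibles, invariants of tensor products) that the paper leaves implicit.
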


\begin{rem}
(1) By Theorem \ref{thm:main-classification}, we can also completely classify aperiodic automorphisms 
of $\mathcal{W}\otimes\mathbb{K}$, where $\mathbb{K}$ 
denotes the C$^*$-algebra of compact operators on an infinite-dimensional separable Hilbert space. 
Indeed, trace scaling automorphisms of $\mathcal{W}\otimes\mathbb{K}$ are completely classified in 
\cite[Theorem 7.3]{Na2}. 
Let $\alpha$ be a trace preserving aperiodic automorphism of $\mathcal{W}\otimes\mathbb{K}$. 
Since $\alpha$ is trace preserving,  we have 
$d_{\tau\otimes \mathrm{Tr}}(\alpha(1_{M(\mathcal{W})}\otimes e_{11}))
=d_{\tau\otimes \mathrm{Tr}}(1_{M(\mathcal{W})}\otimes e_{11})$, where $e_{11}$ is a rank 
one projection in $\mathbb{K}$, and $\mathrm{Tr}$ is the usual trace on $\mathbb{K}$. 
By \cite[Proposition 2.9]{Na0}, $\alpha(1_{M(\mathcal{W})}\otimes e_{11})$ is Murray--von Neumann 
equivalent to $1_{M(\mathcal{W})}\otimes e_{11}$ in $M(\mathcal{W}\otimes \mathbb{K})$. 
Since $1_{M(\mathcal{W}\otimes \mathbb{K})}-1_{M(\mathcal{W})}\otimes e_{11}$ is 
Murray--von Neumann equivalent to $1_{M(\mathcal{W}\otimes \mathbb{K})}$, 
$1_{M(\mathcal{W}\otimes \mathbb{K})}-\alpha(1_{M(\mathcal{W})}\otimes e_{11})$ 
is Murray--von Neumann equivalent to 
$1_{M(\mathcal{W}\otimes \mathbb{K})}-1_{M(\mathcal{W})}\otimes e_{11}$. 
Hence there exists a unitary element 
$u$ in $M(\mathcal{W}\otimes\mathbb{K})$ such that 
$u\alpha(1_{M(\mathcal{W})}\otimes e_{11})u^*=1_{M(\mathcal{W})}\otimes e_{11}$. Define an 
automorphism $\gamma$ of $\mathcal{W}$ by
$$
\gamma (x):= \mathrm{Ad}(u)\circ \alpha (x\otimes e_{11})
$$
for any $x\in \mathcal{W}\cong (1_{M(\mathcal{W})}\otimes e_{11})
(\mathcal{W}\otimes \mathbb{K})
(1_{M(\mathcal{W})}\otimes e_{11})$. Then $\alpha$ is outer conjugate to 
$\gamma \otimes \mathrm{id}_{\mathbb{K}}$. Hence the classification of 
trace preserving aperiodic automorphisms of $\mathcal{W}\otimes\mathbb{K}$ can be reduced to 
the classification of aperiodic automorphisms of $\mathcal{W}$. 
\ \\
(2) If we can show that every aperiodic automorphism of $\mathcal{Z}$ (resp. a UHF algebra 
$B$ of infinite type) is weakly stable, then we can classify aperiodic automorphisms 
of $\mathcal{Z}$ (resp. $B$) up to outer conjugacy 
by essentially the same argument as in the proof of Theorem \ref{thm:main-classification}(1). 
Note that the range of the invariant is a separate problem. 
\end{rem}

\section*{Acknowledgments}
The author would like to thank Yasuhiko Sato and Toshihiko Masuda for helpful discussions  
at a seminar held at Kyushu University. In particular, the definition of weak stability was inspired by 
these discussions. 

The author used Claude Opus 5.5, an AI model developed by Anthropic,
in the preparation of this paper. It was used to check the English and the structure of sentences, 
to find typographical errors, and to check details of some proofs. 
In addition, the idea of Lemma \ref{lem:Opus} and its proof was suggested by Claude,
and the author verified it and rewrote the proof in the present form. 
Claude was also used to simplify the proofs of Proposition \ref{pro:intertwining} and 
Theorem \ref{thm:path}. 
The author has independently verified all the mathematical arguments
in this paper and takes
full responsibility for the content of this paper.

\end{document}